\DeclareMathOperator{\Gal}{Gal}
\DeclareMathOperator{\rank}{rank}
\DeclareMathOperator{\ord}{ord}
\DeclareMathOperator{\coker}{coker}
\DeclareMathOperator{\GL}{GL}
\DeclareMathOperator{\SL}{SL}
\DeclareMathOperator{\ev}{ev}
\DeclareMathOperator{\inc}{inc}
\DeclareMathOperator{\Id}{Id}
\newtheorem{theorem}{Theorem}[section]
\newtheorem*{theorem*}{Theorem}
\newtheorem{lemma}[theorem]{Lemma}
\newtheorem{conjecture}[theorem]{Conjecture}
\newtheorem{proposition}[theorem]{Proposition}
\newtheorem{corollary}[theorem]{Corollary}
\newtheorem{defn}[theorem]{Definition}
\numberwithin{equation}{section}
\newtheorem{lthm}{Theorem} 
\theoremstyle{remark}
\newtheorem{remark}[theorem]{Remark}
\newtheorem{example}[theorem]{Example}
\newcommand{\Supp}{\mathrm{Supp}}
\newcommand{\EE}{\mathbb{E}}
\newcommand\EatDot[1]{}
\newcommand{\cL}{\mathcal{L}}
\newcommand{\cX}{\mathcal{X}}
\newcommand{\QQ}{\mathbb{Q}}
\newcommand{\ZZ}{\mathbb{Z}}
\newcommand{\FF}{\mathbb{F}}
\newcommand{\Zp}{\mathbb{Z}_p}
\definecolor{Green}{rgb}{0.0, 0.5, 0.0}
\newcommand{\cC}{\mathcal{C}}
\newcommand{\Q}{\mathbb{Q}}
\newcommand{\Z}{\mathbb{Z}}
\newcommand{\Char}{\mathrm{Char}}
\newcommand{\Tr}{\mathrm{Tr}}
\newcommand{\In}{\mathrm{In}}
\newcommand{\Out}{\mathrm{Out}}
\newcommand{\Fl}{\mathbb{F}_l}
\newcommand{\Deck}{\mathrm{Deck}}
\newcommand{\cY}{\mathcal{Y}}
\newcommand{\Div}{\textup{Div}}
\newcommand{\Pic}{\textup{Pic}}
\newcommand{\BF}{\mathcal{BF}}
\newcommand{\VV}{\mathbb{V}}
\newcommand{\sR}{\mathscr{R}}
\newcommand{\Zl}{\ZZ_\ell}
\newcommand{\Ql}{\QQ_\ell}
\newcommand{\pic}{\textup{Pic}}
  \DeclareFontFamily{U}{wncy}{}
  \DeclareFontShape{U}{wncy}{m}{n}{<->wncyr10}{}
  \DeclareSymbolFont{mcy}{U}{wncy}{m}{n}
  \DeclareMathSymbol{\sha}{\mathord}{mcy}{"58}
  \DeclareMathSymbol{\zhe}{\mathord}{mcy}{"11}
\DeclareMathSymbol{\lsb@l}{\mathalpha}{letters}{`l}
\newcommand{\mylabel}[2]{#2\def\@currentlabel{#2}\label{#1}}
\title{Iwasawa theory for abelian towers of digraphs}
\author[A. Lei]{Antonio Lei}
\address[Lei]{Department of Mathematics and Statistics\\University of Ottawa\\
150 Louis-Pasteur Pvt\\
Ottawa, ON\\
Canada K1N 6N5}
\email{antonio.lei@uottawa.ca}
\author[K. Müller]{Katharina Müller}
\address[Müller]{Institut für Theoretische Informatik, Mathematik und Operations Research, Universität der Bundeswehr München, Werner-Heisenberg-Weg 39, 85577 Neubiberg, Germany}
\email{katharina.mueller@unibw.de}
\begin{document}

\begin{abstract}
Let $p$ and $\ell$ be prime numbers, and $d\ge1$ an integer. We formulate and prove Iwasawa main conjectures of the Picard groups and Bowen--Franks groups in $\mathbb{Z}_p^d$-towers of digraphs. In particular, we relate the $\ell$ parts of these groups to certain $p$-adic $L$-functions defined using a voltage assignment. In the case where $\ell$ is not equal to $p$, we make use of the recent work of Bandini--Longhi to define the appropriate characteristic ideals. We also prove the growth of the $\ell$-part of these groups, generalizing classical results of Sinnott and Washington on ideal class groups of number fields. Finally, we introduce the concept of defect, which compare certain algebraic and analytic ranks related to Bowen--Franks groups and study their asymptotic behaviour in a $\mathbb{Z}_p^d$-tower. 
\end{abstract}

\maketitle

\section{Introduction}
Let $p$ be a fixed prime number and $d\ge1$ an integer.  Let $X$ be a finite connected undirected graph, and consider a sequence of covers of finite connected graphs
$$X = X_{0} \leftarrow X_{1} \leftarrow X_{2} \leftarrow \ldots \leftarrow X_{n} \leftarrow \ldots $$
for which the composition $X_{n} \rightarrow \ldots \rightarrow X_{0} = X$ is a Galois cover whose Galois group of deck transformations is isomorphic to $(\mathbb{Z}/p^{n}\mathbb{Z})^d$ for all $n \ge 1$. For every finite connected undirected graph $X$, there is a finite abelian group ${\rm Pic}^{0}(X)$, whose cardinality, denoted by $\kappa(X)$, is the number of spanning trees of $X$.  By analogy to the classical Iwasawa theory for ideal class groups of number fields, one can study the variation of ${\rm ord}_{p}(\kappa(X_{n}))$ as $n$ increases. 

Suppose that $d=1$. In \cite{vallieres,vallieres2,vallieres3}, it was shown that there exist $\mu_{p},\lambda_{p} \in \mathbb{Z}_{\ge 0}$ and $\nu_{p} \in \mathbb{Z}$ such that
\begin{equation} \label{p=l}
{\rm ord}_{p}(\kappa(X_{n})) = \mu_{p} \cdot p^{n} + \lambda_{p} \cdot n + \nu_{p}, 
\end{equation}
provided $n$ is large enough.  Let $\ell$ be another prime number distinct from $p$.  Analogously to the results of Sinnott and Washington on class groups, in \cite{leivallieres} it was shown that there exist $\mu_{\ell} \in \mathbb{Z}_{\ge 0}$ and $\nu_{\ell} \in \mathbb{Z}$ such that
\begin{equation} \label{pn=l}
{\rm ord}_{\ell}(\kappa(X_{n})) = \mu_{\ell} \cdot p^{n} + \nu_{\ell},
\end{equation}
provided $n$ is large enough.
For general $d\ge1$, \cite{DV} extended \eqref{p=l} proving that $\ord_p(\kappa(X_{n}))$ can be expressed as a polynomial in $p^n$ and $n$.

When $p=\ell$, the formula (\ref{p=l}) can be obtained either using the theory of Ihara zeta and $L$-functions or the structure theorem of finitely generated modules over the Iwasawa algebra $\mathbb{Z}_{p}\llbracket \mathbb{Z}_{p}\rrbracket$, and there is a main conjecture connecting both approaches (see \cite[Theorem 6.1]{kataoka} and \cite[Theorem 5.2 and Remark 5.3]{KM1}). One goal of this work is to study an Iwasawa main conjecture that connects the theory of zeta functions to the $l$-primary part of Picard groups, in the setting of $\Zp^d$-towers of directed graphs, which are sometimes referred as digraphs in the literature. In the case where $p\ne l$, we will make use of the recent work of Bandini--Longhi \cite{BandiniLonghi}, where a novel conceptual framework to study modules of $\mathbb{Z}_{\ell}\llbracket \mathbb{Z}_{p}^d \rrbracket$ is developed. Furthermore, we prove a Sinnott--Washington type theorem, generalizing \eqref{pn=l} to digraphs, allowing $d$ to be any integers $\ge1$.

In addition, we prove analogous results for the Bowen--Franks groups attached to a $\Zp^d$-tower of digraphs, first introduced in \cite{BF} (see Definition~\ref{def:BF}). These groups play an important role in symbolic dynamics, where digraphs are related to shift spaces, and Bowen--Franks groups are important invariants that can be used to detect equivalence and conjugacy; see \cite{franks} and \cite{lindmarcus} for more details. 

\subsection{Main results}
In order to explain the main results of this article, we introduce some notation. Let $p$ be a fixed prime number and $d\ge1$ an integer. Given a prime number $l$, which may or may not be equal to $p$, we define the Iwasawa algebra $\Lambda_l(\Gamma)=\displaystyle\varprojlim_n\Zl[\Gamma/\Gamma^{p^n}]$, where $\Gamma=\Zp^d$. Let $\Gamma^\vee$ denote the set of finite-order $\overline{\Q}$-valued character of $\Gamma$. Given an element $F\in\Lambda_l(\Gamma)$ and $\omega\in\Gamma$, we can evaluate $F$ at $\omega$, denoted by $\omega(F)\in\overline{\Ql} $ (after fixing an embedding $\overline{\QQ}\hookrightarrow\overline{\Ql}$). We can also define the characteristic ideal of a finitely generated module $M$ over $\Lambda_l(\Gamma)$, denoted by $\Char_{\Lambda_l(\Gamma)}M$ (see \S\ref{sec:IwAlg} where this is reviewed). 

Throughout, we fix a $\Zp^d$-tower of strongly connected digraphs $(X_n)_{n\ge0}$, in the sense of Definition~\ref{def:tower}. Such a tower can be represented by a $\Zp^d$-valued voltage assignment, which is a function $\alpha$ from the set of directed edges of $X:=X_0$ to $\Zp^d$ (see Lemma~\ref{lem:voltage}). We define two $p$-adic functions associated with this tower, namely, the $p$-adic zeta function $L_p(X,\alpha)$ and the $p$-adic Bowen--Franks function $\cL_p(X,\alpha)$ in Definition~\ref{def:MXalpha}. These functions are closely related to the archimedean zeta functions associated with the digraphs $X_n$, as demonstrated in Proposition~\ref{interpolation}. An interesting feature of these functions is that they can be regarded as elements of $\Lambda_l(\Gamma)$ for all primes $l$.

On the algebraic side, we define 
$\Pic_l(X_\infty)$ (resp. $\BF_l(X_\infty)$) as the inverse limits of the $l$-primary parts of the Picard groups (resp. Bowen--Franks groups), which are finitely generated $\Lambda_l(\Gamma)$-modules.
The first main theorem of this article proves the validity of the Iwasawa main conjectures linking these algebraic objects to their analytic counterparts.
\begin{lthm}[Theorem~\ref{thm:IMC}]\label{thmA}
    Let $(X_n)_{n\ge0}$ be a $\Zp^d$-tower of strongly connected digraphs. For all primes $\ell$, we have
   \[\Char_{\Lambda_l(\Gamma)}(\pic_l(X_\infty))=(L_p(X,\alpha)),\]
   and
   \[\Char_{\Lambda_l(\Gamma)}(\BF_l(X_\infty))=(\cL_p(X,\alpha)).\]
   Furthermore, ${\rm Pic}_{l}(X_{\infty})$ is a torsion $\Lambda_l(\Gamma)$-module. 
\end{lthm}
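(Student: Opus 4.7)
\emph{Proof proposal.} The plan is to realise $\pic_l(X_\infty)$ and $\BF_l(X_\infty)$ as cokernels of explicit square matrices over $\Lambda_l(\Gamma)$ whose determinants generate the same ideals as $L_p(X,\alpha)$ and $\cL_p(X,\alpha)$ respectively. This reduces both main conjectures to an identification of Fitting ideals with characteristic ideals in the Bandini--Longhi framework, and reduces the torsion assertion to the non-vanishing of $L_p(X,\alpha)$.

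First I would use the voltage assignment $\alpha$ to produce canonical twisted matrices $\widetilde{L}(\alpha)$ and $\widetilde{\cL}(\alpha)$ with entries in $\Zp\lb\Gamma\rb$, which specialise at each quotient $\Gamma/\Gamma^{p^n}$ to (a reduced form of) the Laplacian of $X_n$ and to $I-A_{X_n}$, where $A_{X_n}$ is the appropriate adjacency-type matrix. By the matrix-tree theorem for digraphs, the cokernels of these matrices at finite level compute $\pic^0(X_n)$ and $\BF(X_n)$. Extracting $l$-primary parts and passing to the inverse limit in $n$ (after extending scalars to $\Lambda_l(\Gamma)$) yields square presentations
\[\Lambda_l(\Gamma)^N \xrightarrow{\widetilde{L}(\alpha)} \Lambda_l(\Gamma)^N \to \pic_l(X_\infty) \to 0\]
and analogously for $\BF_l(X_\infty)$. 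By Definition~\ref{def:MXalpha}, $L_p(X,\alpha)$ and $\cL_p(X,\alpha)$ are, up to units in $\Lambda_l(\Gamma)$, the determinants of these matrices, so the initial Fitting ideals of the two modules are precisely $(L_p(X,\alpha))$ and $(\cL_p(X,\alpha))$.

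Next I would identify Fitting ideals with characteristic ideals. When $l=p$, the algebra $\Lambda_p(\Gamma)$ is the classical multivariable Iwasawa algebra, and for a finitely generated torsion module admitting a square presentation whose determinant is a non-zero-divisor, the Fitting ideal agrees with the characteristic ideal. When $l\neq p$, I would invoke the Bandini--Longhi framework \cite{BandiniLonghi}, which defines characteristic ideals for finitely generated $\Lambda_l(\Gamma)$-modules and proves the corresponding Fitting--characteristic compatibility for square presentations. The torsion of $\pic_l(X_\infty)$ would then follow once we verify that $L_p(X,\alpha)$ is a non-zero-divisor; by the interpolation property of Proposition~\ref{interpolation}, the values $\omega(L_p(X,\alpha))$ for $\omega\in\Gamma^\vee$ recover archimedean zeta-type quantities of the $X_n$, which are nonzero because the tower consists of strongly connected digraphs admitting spanning arborescences.

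The main obstacle is the case $l\neq p$. Here $\Lambda_l(\Gamma)$ is not a regular local ring in the usual Iwasawa-theoretic sense, and the familiar structure theorem of finitely generated torsion modules does not apply. One must check that the Bandini--Longhi formalism indeed reads off the characteristic ideal from a square presentation in the form needed, and that the inverse limit of finite-level cokernels coincides, as a $\Lambda_l(\Gamma)$-module, with the cokernel of the limiting matrix without introducing spurious pseudo-null contributions that would distort the ideal computation. A secondary technical point, handled along the way, is verifying that extracting $l$-primary parts commutes with the inverse limit and with the matrix-tree identification of determinants, so that the twisted determinant is compatible with Definition~\ref{def:MXalpha} on the nose.
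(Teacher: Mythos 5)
Your proposal follows essentially the same route as the paper: a square presentation $\Lambda_l(\Gamma)^r\to\Lambda_l(\Gamma)^r\to \pic_l(X_\infty)\to 0$ obtained as the inverse limit of the finite-level presentations (the paper's Theorem~\ref{char-udir}, justified by a control lemma), whose determinant is by definition $L_p(X,\alpha)$, followed by the identification of this Fitting-type ideal with the characteristic ideal componentwise over the idempotent decomposition of Bandini--Longhi when $l\ne p$ (Lemma~\ref{lem:finite-char-ideal-non-equal-char}) and by the usual structure theory when $l=p$.

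The one step whose justification, as written, would not go through is the non-vanishing input for torsionness. You appeal to Proposition~\ref{interpolation} to say that the values $\omega(L_p(X,\alpha))$ ``recover archimedean zeta-type quantities'' and are therefore nonzero; but the clean interpolation of $L_p(X,\alpha)$ in terms of $L(X_n/X,\omega,q^{-1})$ is only available when $X$ is out-regular, and in any case the value at the \emph{trivial} character is $\det(D_X-A_X)=0$ (the columns of the directed Laplacian sum to zero), so the blanket claim of non-vanishing is false. What is actually needed, and what the paper proves (Theorem~\ref{thm:non-vanishing}), is that $\omega(L_p(X,\alpha))=\det(D_X-\omega(A_\alpha))\ne 0$ for every \emph{non-trivial} $\omega$; the argument is that a strongly connected digraph has a single reach, so $D_{X_n}-A_{X_n}$ has rank $r_n-1$ and $\Pic(X_n)$ has $\ZZ$-rank exactly $1$ for all $n$, forcing the unique rank-contributing character block to be the trivial one. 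Your invocation of spanning arborescences is morally this same matrix-tree/rank argument and can be made rigorous along those lines, but the interpolation route should be dropped. (Relatedly, for the Bowen--Franks side no non-vanishing holds in general --- see Example~\ref{eg:unbounded BF} --- which is why the paper asserts torsionness only for $\Pic_l(X_\infty)$; your write-up correctly does not claim more.)
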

In the special case where $l=p$ and the graphs $X_n$ are undirected, the statement on Picard groups given by Theorem~\ref{thmA} has been proved in \cite[Theorem 6.1]{kataoka} and \cite[Theorem 5.2 and Remark 5.3]{KM1}.

We remark that $\BF_l(X_\infty)$ is not always $\Lambda_l(\Gamma)$-torsion; see Example~\ref{eg:unbounded BF}. When it is not torsion, the assertion on Bowen--Franks groups in Theorem~\ref{thmA} simply says $(0)=(0)$.

The second main result of the article is the following generalization of formula \eqref{pn=l} to $\Zp^d$-towers of digraphs.

\begin{lthm}[Theorem~\ref{thm:generalizedSinnott}]\label{thmB}
       Let $(X_n)_{n\ge0}$ be a $\Zp^d$-tower of strongly connected digraphs.  Let $l\ne p$ be a prime number. Then 
    \[
    \ord_l(|\Pic_l(X_n)_\mathrm{tor}|)=p^{nd}\mu_l(L_p(X,\alpha))+O(p^{(d-1)n}).
    \] If $\mathcal{L}_p(X,\alpha)(\omega)\neq 0$ for all but finitely many $\omega\in\Gamma^\vee$, then
    \[\ord_l(\vert \BF_l(X_n)_{\textup{tor}}\vert)=p^{nd}\mu_l(\mathcal{L}_p(X,\alpha))+O(p^{(d-1)n}).\]
\end{lthm}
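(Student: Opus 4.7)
The plan is to combine the Iwasawa Main Conjecture (Theorem~\ref{thmA}) with a character-by-character decomposition of $\Pic_l(X_n)$ and a higher-dimensional generalization of the classical Sinnott--Washington counting lemma. Since $\Gamma = \Zp^d$ is pro-$p$ and $l \ne p$, every finite-order character $\omega$ of $\Gamma$ has order coprime to $l$, so $\omega(L_p(X,\alpha))$ lies in the ring of integers of an unramified extension of $\Ql$, and $\ord_l(\omega(L_p(X,\alpha)))$ is a non-negative integer (or $+\infty$).

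The first step is to establish an identity of the form
\[
\ord_l\bigl(|\Pic_l(X_n)_{\mathrm{tor}}|\bigr) = \sum_{\omega \in \widehat{\Gamma/\Gamma^{p^n}}} \ord_l\bigl(\omega(L_p(X,\alpha))\bigr) + O(1).
\]
This would follow by decomposing $\Pic_l(X_n) \otimes_{\Zl} \overline{\Zl}$ into $\omega$-isotypic components under $\Gamma/\Gamma^{p^n}$ and applying Theorem~\ref{thmA} together with the interpolation formula of Proposition~\ref{interpolation}; the uniformly bounded error term absorbs the passage from coinvariants of $\Pic_l(X_\infty)$ to the module itself and from the characteristic ideal to cardinalities of isotypic pieces.

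Using the Bandini--Longhi Weierstrass-type factorization, I write $L_p(X,\alpha) = l^{\mu_l(L_p(X,\alpha))} \cdot G$ with $\mu_l(G) = 0$. Since $|\widehat{\Gamma/\Gamma^{p^n}}| = p^{nd}$, this immediately yields
\[
\sum_\omega \ord_l(\omega(L_p(X,\alpha))) = p^{nd}\, \mu_l(L_p(X,\alpha)) + \sum_\omega \ord_l(\omega(G)),
\]
so the theorem reduces to the assertion that the residual sum is $O(p^{(d-1)n})$.

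This last estimate is the main obstacle. Because $\mu_l(G) = 0$, the reduction $\overline{G}$ modulo $l$ is a nonzero element of $\Fl\lb\Gamma\rb$, and its vanishing locus inside the $d$-dimensional character variety of $\Gamma$ is a proper Zariski-closed subset of dimension at most $d-1$. The task is to bound the number of $p^n$-torsion characters lying on this locus, weighted by $\ord_l(\omega(G))$, by $O(p^{(d-1)n})$. The strategy is to use the Bandini--Longhi normalization to reduce to the case of a distinguished polynomial and then project out variables one at a time, reducing to a $(d-1)$-dimensional estimate which is settled by induction on $d$; the base case $d = 1$ is essentially the original Sinnott--Washington lemma applied to Iwasawa power series over $\Zl\lb T\rb$ with $l \ne p$. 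For the Bowen--Franks statement, the hypothesis that $\cL_p(X,\alpha)(\omega) \ne 0$ for all but finitely many $\omega$ guarantees that $\cL_p(X,\alpha)$ is a non-zero element of $\Lambda_l(\Gamma)$, hence admits an analogous Weierstrass-type factorization; the same counting argument applies verbatim with $\cL_p(X,\alpha)$ in place of $L_p(X,\alpha)$.
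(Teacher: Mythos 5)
Your overall architecture is the same as the paper's: decompose $\Pic_l(X_n)$ into isotypic pieces under $\Gamma_n$, use the main conjecture to identify the $l$-order of each piece with $\ord_l(\omega(L_p(X,\alpha)))$, factor out $\mu_l$, and reduce to showing that the residual sum is $O(p^{(d-1)n})$. The first two steps are essentially right (one small point: the trivial character must be excluded, since $\omega_{\mathrm{triv}}(L_p(X,\alpha))=\det(D_X-A_X)=0$ for a strongly connected digraph; the paper sums only over $\Supp_n(F)=\{\omega:\omega(F)\ne0\}$ and isolates $\ord_l(|\Pic_l(X_0)_{\mathrm{tor}}|)$ as the constant). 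The problem is the final estimate, where your proposal has two genuine gaps.

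First, the mechanism you propose---normalize to a distinguished polynomial via Bandini--Longhi and bound the $p^n$-torsion characters on the Zariski vanishing locus of $\bar G$, by projecting out variables and inducting on $d$---is not available in this setting. For $l\ne p$ the ring $\Lambda_l(\Gamma)$ is a product of unramified discrete valuation rings, not a regular local ring, and there is no Weierstrass preparation; moreover $\bar G$ is not a polynomial, since the exponents in the monomials $T_1^{a_1}\cdots T_d^{a_d}$ are $p$-adic integers, so $\bar G$ only defines a function on $\mu_{p^\infty}^d$ and has no Zariski vanishing locus in $(\overline{\FF}_l^\times)^d$. What makes the estimate work is that $L_p(X,\alpha)$ lies in $\Phi_{p,d}=\ZZ[\Zp^d]$, i.e.\ is a \emph{finite} sum $\sum_i b_iT^{a_i}$. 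Sinnott's trace argument (taking the trace from $\FF_l(\mu_p,a_i(\vec\zeta))$ down to $\FF_l(\mu_p)$, a $p$-power degree extension) shows $\ev_{\vec\zeta}\bar G\ne0$ unless two exponent vectors collide, $a_i(\vec\zeta)^{p^r}=a_j(\vec\zeta)^{p^r}$, and an elementary count shows there are only $O(p^{(d-1)n})$ such $\vec\zeta\in\mu_{p^n}^d$. Your base case $d=1$ also cannot be ``the Sinnott--Washington lemma for Iwasawa power series over $\Zl\lb T\rb$'': a general power series cannot even be evaluated at $p$-power roots of unity when $l\ne p$; the argument lives entirely in the group ring.

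Second, and independently of the counting: even granting that only $O(p^{(d-1)n})$ characters satisfy $\ord_l(\omega(G))>0$, you must show that the individual valuations $\ord_l(\omega(G))$ are bounded uniformly in $n$ and $\omega$, or the weighted sum could exceed $O(p^{(d-1)n})$. You mention the weighting but supply no argument for this boundedness. The paper proves it separately by induction on the number of monomials of $F$: at a character where two exponent vectors collide one merges the two monomials into a single term with coefficient $b_i+b_j\cdot(\text{root of unity in }\mu_{p^r})$, of which there are only finitely many possibilities, and applies the inductive hypothesis. This, too, relies on the finite-sum structure of $\Phi_{p,d}$ and has no analogue for a general element of $\Lambda_l(\Gamma)$. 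Your treatment of the Bowen--Franks case inherits both gaps; the extra hypothesis there is used to choose a level $n_0$ beyond which no character annihilates $\cL_p(X,\alpha)$, so that the isotypic decomposition of the torsion part stabilizes, not to produce a Weierstrass factorization.
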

It is possible to utilize the general theory of Sinnott modules developed in \cite{BandiniLonghi} to estimate the growth of $\ord_l(|\Pic_l(X_n)_\mathrm{tor}|)$ and $\ord_l(\vert \BF_l(X_n)_{\textup{tor}}\vert)$, in particular Theorem~4.7 of \textit{op. cit.} However, Theorem~\ref{thmB} gives a more precise estimate (see Remarks~\ref{rk:sinnott} and \ref{rk:comparesinnott}). Our proof builds on a multi-variable generalization of Sinnott’s method \cite{sinnott}, which analyzes the $l$-adic valuation of one-variable polynomials evaluated at $p$-power roots of unity; see Corollary~\ref{cor:formula}. This general result may be of independent interest and should have further applications beyond the present setting.

When $p=l$, the method of \cite{cuoco-monsky} can be readily generalized to study the variation of $\ord_p(|\Pic_l(X_n)_\mathrm{tor}|)$ and $\ord_p(|\Pic_l(X_n)_\mathrm{tor}|)$, in a manner analogous to the results of \cite{DV} and \cite{KM1}. Since this does not introduce substantially new ideas, and in order to keep the paper to a reasonable length, we do not pursue this direction here.

Given a digraph $X$, we define the algebraic Bowen--Franks rank $b(X)$ of $X$ as the rank of $\BF(X)$ as a $\ZZ$-module and its analytic Bowen--Franks rank $a(X)$ as the order of the pole of the zeta function at $1$. In light of the main conjecture given by Theorem~\ref{thmA} and the relation between $\cL_p(X,\alpha)$ and the zeta function, we expect that these two quantities should be intimately linked. We define the defect of $X$ as $\delta(X)=a(X)-b(X)$ and investigate the variation of $\delta(X_n)$ as $X_n$ varies inside a $\Zp^d$-tower. In particular, we prove:

\begin{lthm}[Theorem~\ref{thm.growth-delta}]\label{thmC}
    Let $(X_n)_{n\ge 0}$ be a $\Z_p^d$-tower of strongly connected digraphs. For all $n\ge1$,
    \[\delta(X_n)\ge \delta(X_{n-1}).\]    
\end{lthm}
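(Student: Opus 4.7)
The plan is to express both the analytic rank $a(X_n)$ and the algebraic rank $b(X_n)$ as sums indexed by the characters $\omega\in\Gamma^\vee$ that factor through $\Gamma/\Gamma^{p^n}$, and then to verify that each character-wise contribution to the defect is non-negative.

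For the analytic side, I would use the factorization of $\zeta(X_n,u)$ in terms of twisted adjacency matrices coming from the voltage assignment, as encoded in the interpolation property of $\mathcal{L}_p(X,\alpha)$ in Proposition~\ref{interpolation}. Writing $A_\omega$ for the $\omega$-twisted adjacency matrix of $X$, one has
\[\zeta(X_n,u)^{-1}=\prod_\omega \det(\Id-uA_\omega),\]
the product being taken over characters $\omega$ of $\Gamma/\Gamma^{p^n}$. Consequently $a(X_n)=\sum_\omega m_\omega$, where $m_\omega$ denotes the algebraic multiplicity of $1$ as an eigenvalue of $A_\omega$, i.e.\ the dimension of the generalized $1$-eigenspace.

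For the algebraic side, $\BF(X_n)$ is by construction the cokernel of $\Id-A^T$ acting on $\Z^{|V(X_n)|}$, where $A$ is the adjacency matrix of $X_n$. Therefore $b(X_n)=\dim_\Q\ker(\Id-A^T)=\dim_\Q\ker(\Id-A)$. The $\C[\Gamma/\Gamma^{p^n}]$-module $\C^{|V(X_n)|}$ decomposes into isotypic components indexed by characters $\omega$, and on the $\omega$-isotypic component the operator $\Id-A$ acts as $\Id-A_\omega$. Hence $b(X_n)=\sum_\omega k_\omega$, with $k_\omega=\dim_\C\ker(\Id-A_\omega)$. Since the ordinary eigenspace is contained in the generalized eigenspace, $k_\omega\le m_\omega$ for every $\omega$, giving
\[\delta(X_n)=\sum_{\omega}(m_\omega-k_\omega),\]
a sum of non-negative integers.

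To conclude, the surjection $\Gamma/\Gamma^{p^n}\twoheadrightarrow\Gamma/\Gamma^{p^{n-1}}$ induces an injection of character groups, and for each character $\omega$ factoring through $\Gamma/\Gamma^{p^{n-1}}$ the twisted matrix $A_\omega$, and hence the integer $m_\omega-k_\omega$, is intrinsic to $\omega$ and does not depend on the level $n$ at which it is considered. Thus the sum defining $\delta(X_{n-1})$ reappears as a subsum of that defining $\delta(X_n)$, and the additional terms, coming from characters of exact order $p^n$ in at least one coordinate of $\Gamma$, are all non-negative. This yields $\delta(X_n)\ge\delta(X_{n-1})$. The main technical obstacle is setting up the character decomposition of the adjacency matrix of $X_n$ in terms of the voltage assignment on $X$ and matching it precisely with the definitions of $L_p(X,\alpha)$ and $\mathcal{L}_p(X,\alpha)$; once this bookkeeping has been carried out, the monotonicity is immediate from the fact that we are summing non-negative character-wise defects over a growing index set.
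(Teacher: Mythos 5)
Your proposal is correct and follows essentially the same route as the paper: both decompose $a(X_n)$ and $b(X_n)$ character by character (the paper's Lemma~\ref{formula-a,b}), identify the two contributions as the algebraic and geometric multiplicities of the eigenvalue $1$ of $\omega(A_\alpha)$, and observe that these are intrinsic to $\omega$, so $\delta(X_{n-1})$ appears as a subsum of $\delta(X_n)$ with non-negative extra terms. The only cosmetic difference is that you carry out the isotypic decomposition over $\C$, whereas the paper works with $\Z_l[\Gamma_n]$ and Galois orbits of characters; the resulting ranks coincide.
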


In the special case where $d=1$, we prove that $\delta(X_n)$ is bounded independently of $n$ provided that $\cL_p(X,\alpha)\ne0$  (see Lemma~\ref{lem:bounded-defect}). In particular, combined with Theorem~\ref{thmC}, we deduce that $\delta(X_n)$ becomes a constant for $n$ sufficiently large. Furthermore, when $(X_n)_{n\ge0}$ arises from a constant voltage assignment $\alpha$, that is, the image of $\alpha$ consists of a single element, similar to the towers studied in \cite{constant-voltage}, we can give an explicit $n$ for which $\delta(X_n)$ becomes a constant (see Lemma~\ref{defec-constant-voltage}). In the case of isogeny graphs defined using supersingular elliptic curves, we can even show that $\delta(X_n)=\delta(X_0)$ for all $n\ge0$; see Theorem~\ref{cor.delta-isogeny}. Our proof relies on recent results of Codogni--Lido \cite{codogni-lido} which give an explicit relation between isogeny graphs and modular forms. 

The behaviour of the defect $\delta(X_n)$ resembles the Leopoldt defect for number fields: Along $\Z_p$-towers of number fields the Leopoldt defect is increasing and uniformly bounded. The Leopoldt conjecture predicts that it is always zero. In the setting of $\Z_p$-towers of digraphs, our calculations have led us to predict that the defect should be constant throughout the tower. Whether there is a relation between these two defects on a conceptual level is unknown to the authors at the time of writing and will be the subject of forthcoming research.  

\subsection*{Acknowledgement}
The authors thank Giulio Codogni, Guido Lido, Mike Newman and Daniel Vallières for very helpful discussions during the preparation of this article. AL's research is supported by the NSERC Discovery Grants Program RGPIN-2020-04259.

\section{Preliminaries on directed graphs}

\begin{defn}
A \textbf{directed graph} (or a \textbf{digraph}) $X$ consists of a set of vertices $V(X)$ and a set of (directed or oriented) edges $\EE(X)$, together with an incidence map 
$${\rm inc}:\EE(X) \rightarrow V(X) \times V(X)$$ 
denoted by $e \mapsto {\rm inc}(e) = (o(e),t(e))$, where $o(e)$ is called the origin (or the source) and $t(e)$ the terminus (or target) of the directed edge $e$, respectively.  

Given $v\in V(X)$, we define
\begin{align*}
    \In(v)&=\{e\in\EE(X):t(e)=v\},\\
    \Out(v)&=\{e\in\EE(X):o(e)=v\}.
\end{align*}
The \textbf{in-degree} and \textbf{out-degree} of $v$ are defined as the cardinality of $\In(v)$ and $\Out(v)$, respectively.
\end{defn}

Throughout this article, we assume that $\EE(X)$ and $V(X)$ are finite sets. The cardinality of $V(X)$ is called the \textbf{\textit{order}} of $X$.

\begin{defn}
An \textbf{undirected graph} $X$ consists of a digraph $X = (V(X),\EE(X))$ together with an inversion map
$${\rm inv}:\EE(X) \rightarrow \EE(X) ,$$
denoted by ${\rm inv}(e) = \bar{e}$, which satisfies
$$\bar{e} \neq e,\quad \bar{\bar{e}} = e, \quad\text{and}\quad o(\bar{e}) = t(e) $$
for all $e \in \EE(X)$.  The set of undirected edges of $X$ is given by $E(X) = \EE(X)/{\rm inv}$. 
\end{defn}

Informally, an edge of an undirected graph can be viewed as a pair of directed edges $\{e, \bar e\}$ in a digraph, oriented in opposite directions. In particular, an undirected graph can be considered as a special case of digraphs. Although we primarily study digraphs in this article, all of our results apply equally to undirected graphs through this correspondence.

\begin{defn}
  Let $X$ be a digraph. The \textbf{forgetful functor} sends $X$ to an undirected graph $X'$, with $V(X)=V(X')$ and  $\EE(X')=\EE(X)\bigsqcup\EE(X)'$, where the inversion map induces bijections $\EE(X)\to\EE(X)'$ and $\EE(X)'\to\EE(X)$.
\end{defn}

More concretely, for each directed edge $e$ in $X$, we add an extra edge $\bar e$ in $X'$ with opposite orientation as $e$. The set of undirected edges in $X'$ consist of $\{e,\bar e\}$ as $e$ runs through $\EE(X)$. In particular, each directed edge in $X$ gives rise to exactly one undirected edge in $X'$.

\begin{defn}
    Let $X$ be a digraph. A \textbf{path} of length $k$ in $X$ is a sequence of edges $e_1,\dots, e_k\in\EE(X)$  such that $t(e_i)=o(e_{i+1})$ for $i=1,\dots k-1$. We say that it is a path from $o(e_1)$ to $t(e_k)$. It is said to be a \textbf{directed cycle} if $o(e_1)=t(e_k)$.

    We say that $X$ is \textbf{strongly connected} if there is a path from $v$ to $v'$ in $X$ for all  $(v,v')\in V(X)\times V(X)$.  Let $X'$ be the undirected graph obtained from $X$ after applying the forgetful functor.  We say that the graph $X$ \textbf{weakly connected} if $X'$ is strongly connected.
\end{defn}

\begin{defn}\label{def:div}
    Let $X$ be a digraph. The divisor group $\Div(X)$ is the group of divisors of $V(X)$. For $v\in V(X)$, we write $d_v$ for the out-degree of $v$ and define
    \[P_v=d_v(v)v-\sum_{e\in \Out(v)}t(e)\in \Div(X).\]
    
    The \textbf{group of principal divisors} of $X$, denoted by $\Pr(X)$, is the subgroup of $\Div(X)$ generated by $\{P_v\mid v\in V(X)\}$. We define the \textbf{Picard group} of $X$ as $\pic(X)=\Div(X)/\Pr(X)$.\footnote{This is sometimes called the critical group.} 

    Let $\{v_1,\dots, v_r\}$ be an enumeration of $V(X)$. Let $D_X$ denote the \textbf{degree matrix} of $X$, which is defined as the $r\times r$ diagonal matrix with diagonal entries given by $d_{v_1},\dots, d_{v_r}$. Let $A_X$ denote the \textbf{adjacency matrix} of $X$, which is the $r\times r$ matrix whose $(i,j)$-entry is equal to the number of edges $e$ such that $\inc(e)=(v_j,v_i)$.

For a prime number $l$, we write $\pic_l(X)$ for $\pic(X)\otimes\Zl$.
\end{defn}

We have $\pic(X)\cong \ZZ^r/(D_X-A_X)\ZZ^r$ and $\pic_l(X)\cong \Zl^r/(D_X-A_X)\Zl^r$ as groups. 

\begin{remark}\label{rank-picard} 
Given a digraph $X$ and $v\in V(X)$, the \textit{reachable set} $R(v)$ is defined as $$\{u\in V(X):\exists \text{ a path from } v \text{ to } u\}.$$
A \textit{reach} of $X$ is a maximal reachable set. The algebraic and geometric multiplicities of the eigenvalue $0$ of $D_X-A_X$ are equal to the number of reaches in $X$; see \cite[Theorem~IV.6]{VL}. In particular, if $X$ is strongly connected, the rank of the matrix $D_X-A_X$ is equal to $r-1$, where $r$ is the order of $X$.
\end{remark}
\begin{defn}\label{def:BF}
    Let $X$ be a digraph of order $r$. We define the \textbf{Bowen--Franks operator} on $\Div(X)\cong \ZZ^r$ by the matrix $\mathrm{BF}_X=\textup{Id}-A_X$. The \textbf{Bowen-Franks group} of $X$, denoted by $\BF(X)$, is defined as the cokernel of $\mathrm{BF}_X$, i.e., $$\BF(X)=\Div(X)/\mathrm{BF}_X\cdot\Div(X)\cong\ZZ^r/(\textup{Id}-A_X)\ZZ^r.$$ We define the \textbf{algebraic Bowen--Franks rank} of $X$, denoted by $b(X)$, as the rank of $\BF(X)$ as a $\ZZ$-module. 
\end{defn}

\begin{defn}
Let $X$ and $Y$ be digraphs. 
A \textbf{morphism of digraphs} $
f:Y \to X$
is a map that induces two maps $ V(Y) \to V(X)$ and $\EE(Y)\to \EE(X)$ that commute with the incidence map. It is said to be a locally an isomorphism around $v\in V(Y)$ if it induces bijections from $\In(v)$ and $\Out(v)$ to $\In(f(v))$ and $\Out(f(v))$, respectively.

If there is a bijective morphism between two digraphs $X$ and $Y$, we say that they are \textbf{isomorphic}, and write $X\cong Y$.

We say that $Y/X$ is a covering if there is a surjective digraph morphism from $Y$ to $X$ that is locally an isomorphism around every $v\in V(Y)$. 

    If $Y/X$ is a covering of directed graphs with the projection map $\pi:Y\to X$, the group of \textbf{deck transformations} of $Y/X$, denoted by $\Deck(Y/X)$, is the group of digraph automorphisms $\sigma:Y\to Y$ such that $\pi\circ\sigma=\pi$. %
    
    A covering $Y/X$ is said to be \textbf{$d$-sheeted} if $d$ is a positive integer such that each element of $V(X)$ has $d$ pre-images in $V(Y)$.
    
Finally, We say that the covering $Y/X$ is \textbf{Galois}  if there exists an integer $d$ such that $Y/X$ is a $d$-sheeted covering and $\vert \textup{Deck}(Y/X)\vert =d$.
\end{defn}

\begin{remark}\label{rk:connectedness}
Let $Y/X$ be a $d$-sheeted covering, where $X$ is weakly connected. Suppose that there are $r$ weakly connected components in $Y$ and let $Y_0$ be one of these components. Then $Y_0/X$ is a $d/r$-sheeted Galois covering and the group $\Deck(Y/X)$ induces isomorphisms across all weakly connected components of $Y$ (see \cite[Proposition~2.4]{LM2}). In particular, $|\Deck(Y_0/x)|=d/r$ and there are $r!$ elements in $\Deck(Y/X)$ given by permutations of these components.  Thus, 
\[\vert \textup{Deck}(Y/X)\vert \ge r!|\Deck(Y_0/x)|^r>d\]
unless $d=r=2$. In other words, If $Y$ is not weakly connected, then $Y/X$ is not Galois unless $d=r=2$.

If $d=r=2$, it is possible that $Y/X$ is a Galois covering even if $Y$ is not weakly connected. Let $X$ be an arbitrary weakly connected digraph, and let $Y$ be the digraph consisting of the disjoint union of two copies of $X$. Then there is a natural covering $Y\to X$, whose group of deck transformations is the permutation of the two components. As the covering is $2$-sheeted, it is indeed Galois. 
\end{remark}

\begin{defn}
    Let $X$ be a digraph and $(G,\cdot)$ a group. A $G$-valued \textbf{voltage assignment} on $X$ is a function $\alpha:\EE(X)\rightarrow G$. In the case of undirected graph, we require that $\alpha(\bar e)=\alpha(e)^{-1}$ for all $e\in\EE(X)$.

    Given a $G$-valued voltage assignment on $X$ where $G$ is finite, we define the derived digraph $X(G,\alpha)$ whose vertices and directed edges are given by $\VV(X)\times G$ and $\EE(X)\times G$, respectively. {In addition, for all $\sigma\in G$ and $e\in\EE(X)$ with $\inc(e)=(s,t)$, we have $\inc(e,\sigma)=((s,\sigma),(t,\sigma\cdot\alpha(e)))$.}

\end{defn}

In the case where $X$ is an undirected graph, the condition $\alpha(\bar e)=\alpha(e)^{-1}$ ensures that the inversion map on $\EE(X)$ extends to an inverse map on $\EE(X)\times G$. In particular, $X(G,\alpha)$ can be regarded as an undirected graph.

\begin{defn}\label{def:tower}
    Let $X$ be a digraph and $d\ge1$ an integer. A \textbf{$\Zp^d$-tower} over $X$, is a sequence of digraph coverings
\[
X=X_0\leftarrow X_1\leftarrow X_2\leftarrow \cdots \leftarrow X_n\leftarrow\cdots
\]
such that for each $n\ge0$, the cover $X_n/X$ is Galois with Galois group isomorphic to $(\ZZ/p^n\ZZ)^d$.
\end{defn}

It follows from Remark~\ref{rk:connectedness} that if $X$ is weakly connected, then the digraphs $X_n$ in a $\Zp^d$-tower over $X$ are necessarily weakly connected. For our purposes, we will assume that all $X_n$ are strongly connected.

\begin{lemma}\label{lem:voltage}
Let $(X_n)$ be a $\Zp^d$-tower of strongly connected digraphs. There exists a $\Zp^d$-valued voltage assignment $\alpha$ on $X$ such that $X_n\cong X((\ZZ/p^n\ZZ)^d,\alpha_n)$, where $\alpha_n$ is the composition of $\alpha$ with the natural projection map $\Zp^d\to (\ZZ/p^n\ZZ)^d$.
\end{lemma}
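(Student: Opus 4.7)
The plan is to realize $\alpha$ as the projective limit of compatible voltage assignments $\alpha_n \colon \EE(X) \to G_n$, where $G_n := \Deck(X_n/X) \cong (\ZZ/p^n\ZZ)^d$ is identified so that the projections $G_{n+1} \twoheadrightarrow G_n$ coming from the tower correspond to the natural reduction maps. First, I would fix a coherent system of base-vertex lifts: for each $v \in V(X)$, the sets $\pi_n^{-1}(v) \subset V(X_n)$ form an inverse system of nonempty finite sets with surjective transition maps (each cover $X_{n+1} \to X_n$ is surjective on vertices), so the inverse limit is nonempty by compactness. Choosing an element in each such limit produces sections $s_n \colon V(X) \to V(X_n)$ satisfying $\pi_{n+1,n} \circ s_{n+1} = s_n$, where $\pi_{n+1,n}$ denotes the covering map $X_{n+1} \to X_n$.

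Next, for each $e \in \EE(X)$, the local-isomorphism condition around $s_n(o(e))$ produces a unique lift $\tilde e_n \in \EE(X_n)$ of $e$ with $o(\tilde e_n) = s_n(o(e))$. Since $X_n$ is weakly connected (being strongly connected) and $X_n/X$ is Galois, $G_n$ acts freely and transitively on each fiber, so there is a unique element $\alpha_n(e) \in G_n$ with $t(\tilde e_n) = \alpha_n(e) \cdot s_n(t(e))$. The coherence of the $s_n$ and the fact that $\pi_{n+1,n}$ is $G_{n+1}$-equivariant (where $G_{n+1}$ acts on $X_n$ through the natural surjection $G_{n+1} \twoheadrightarrow G_n$) imply, by uniqueness of edge lifts, that $\pi_{n+1,n}(\tilde e_{n+1}) = \tilde e_n$ and hence that the image of $\alpha_{n+1}(e)$ in $G_n$ equals $\alpha_n(e)$. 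Thus $(\alpha_n(e))_n$ assembles into an element $\alpha(e) \in \varprojlim_n G_n = \Zp^d$, defining the desired voltage assignment.

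Finally, I would verify the asserted isomorphism $X_n \cong X((\ZZ/p^n\ZZ)^d, \alpha_n)$ via the map sending $(v, g) \mapsto g \cdot s_n(v)$ on vertices and $(e, g) \mapsto g \cdot \tilde e_n$ on edges: bijectivity follows from the fact that $G_n$ acts freely on $X_n$ with quotient $X$, and the incidence relations match by the defining identity $t(\tilde e_n) = \alpha_n(e) \cdot s_n(t(e))$ together with equivariance of $o$ and $t$ under the Galois action. The main obstacle is ensuring that all of these choices can be made coherently across $n$; once the sections $s_n$ are chosen coherently (which is the only genuine choice involved, and which is provided by compactness of the inverse limit of nonempty finite sets), the rest is forced by uniqueness of edge lifts and freeness of the Galois action, so the construction goes through as outlined.
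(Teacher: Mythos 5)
Your proposal is correct and follows essentially the same strategy as the paper: the paper fixes a lifted spanning tree to label the vertices of $X_n$ as pairs $(v,g)$ and reads off $\alpha_n(e)$ as the difference of group labels at the two ends of a lifted edge, which is exactly your construction with the coherent sections $s_n$ playing the role of the identity-labelled copy of the tree. Your use of the inverse limit of vertex fibers to guarantee coherence of the $s_n$ across $n$ is a clean, self-contained way of justifying the compatibility $\alpha_{n+1}\equiv\alpha_n \bmod p^n$ that the paper asserts more briefly.
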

\begin{proof}
    We follow the discussions \cite[\S2.4]{gonet-thesis} and \cite[\S13-14]{terras} to realize $X_n$ as a derived digraph. Fix a spanning tree $T$ of $X$ rooted at a chosen vertex $v_0$ (i.e., a subgraph that includes all vertices of $X$, containing no directed cycles and $v_0$ is of in-degree $0$ and every other vertex has an in-degree $1$; such a spanning tree exists as $X$ is strongly connected). Let $G_n=\Gal(X_n/X)$. The digraph $X_n$ contains $p^{nd}$ copies of $T$, indexed by the elements of $G_n$. Each vertex of $X_n$ can be labelled as $(v,g)$, where $v\in V(X)$ and $g\in G_n$. If $e\in\EE(X)$ goes from $u$ to $v$, we define $\alpha_n(e)$ to be $g_2-g_1$ if there is an edge in $X_n$ going from $(u,g_1)$ to $(v,g_2)$. One can check that this is well-defined and $X_n$ is isomorphic to the derived digraph $X(\alpha_n)$. Further, since we have the coverings $X_{n+1}\to X_n\to X$, we can check that $\alpha_{n+1}\equiv \alpha_n\mod p^n$. Thus, taking inverse limit gives rise to a $\Zp^d$-valued voltage assignment, as desired.
\end{proof}

From now on, we fix an integer $d\ge1$ and write $\Gamma$ for the topological group $\Zp^d$. The group operation on $\Gamma$ will be written multiplicatively, rather than additively. Further, we write $\Gamma_n:=\Gamma/\Gamma^{p^n}\cong (\ZZ/p^n\ZZ)^d$.

\begin{defn}
Let $X$ be a digraph. If $v\in V(X)$, we define $\cC(v)$ as the set of all directed cycles in $X$ that pass through $v$. If $\alpha$ is a $\Gamma$-valued voltage assignment on $X$ and $C$ is a directed cycle in $X$ that consists of the edges $\{e_1,\ldots,e_m\}$, we define $\alpha(C)$ to be the element $\alpha(e_1)\cdots\alpha(e_m)\in\Gamma$.    
\end{defn}

\begin{defn}
Let $Y/X$ be a covering of digraphs, equipped with the morphism $\pi:Y\to X$. If $P=(e_1,\dots,e_k)$ is a path in $X$, we say that a path $P'=(e_1',\dots, e_k')$ in $Y$ is a lift of $P$ if $\pi(e_i')=e_i$ for $i=1,\dots,k$.
\end{defn}

The following lemma gives a criterion on the strong connectivity of derived digraphs.
\begin{lemma}\label{lem:connected}
Let $X$ be a strongly connected digraph and $\alpha$ a $\Gamma$-valued voltage assignment on $X$. For an integer $n\ge1$, write $\alpha_n$ for the composition of $\alpha$ with the projection map $\Gamma\to\Gamma_n$. Let $X_n$ denote the derived digraph $X(\Gamma_n,\alpha_n)$. The digraph $X_n$ is strongly connected  for all $n$ if and only if $\{\alpha(C)\mid C\in \cC(v)\}$ topologically generates $\Gamma$ for all $v\in V(X)$.
\end{lemma}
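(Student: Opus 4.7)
The plan is to translate strong connectivity of $X_n$ into a statement about voltages of cycles in $X$, and then pass to the inverse limit.

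First, I would use the definition of the derived digraph to observe that the unique lift of a path $P=(e_1,\dots,e_k):u\to v$ in $X$ starting at a vertex $(u,\sigma)\in V(X_n)$ terminates at $(v,\sigma\cdot\alpha_n(P))$, where $\alpha_n(P):=\alpha_n(e_1)\cdots\alpha_n(e_k)$. Since every path in $X_n$ projects bijectively to a path in $X$ given its starting vertex, $X_n$ is strongly connected if and only if for every $u,v\in V(X)$ and every $\gamma\in\Gamma_n$ there exists a path $P:u\to v$ in $X$ with $\alpha_n(P)=\gamma$.

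Next, I would reduce this condition to cycles. Fix $v\in V(X)$ and let $S_n(v)\subseteq \Gamma_n$ denote the image of $\{\alpha(C):C\in\cC(v)\}$. Since the concatenation of two cycles through $v$ is again a cycle through $v$, the set $S_n(v)$ is closed under multiplication in $\Gamma_n$; the finiteness of $\Gamma_n$ then forces $S_n(v)$ to be a subgroup. For any $u,v\in V(X)$, strong connectivity of $X$ provides paths $P_0:u\to v$ and $Q:v\to u$. A direct computation, using the cycles $PQ$ and $P_0Q$ at $u$ together with $QP$ and $QP_0$ at $v$, shows that the set of voltages $\{\alpha_n(P):P \text{ a path } u\to v\}$ coincides with $\alpha_n(P_0)\cdot S_n(u)$ and also with $\alpha_n(P_0)\cdot S_n(v)$. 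In particular $S_n(u)=S_n(v)$, and combining with the previous step, $X_n$ is strongly connected precisely when $S_n(v)=\Gamma_n$ for some (equivalently, every) $v\in V(X)$.

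Finally, I would invoke the standard profinite observation that a subgroup $H\le\Gamma=\Zp^d$ is topologically dense if and only if its image in $\Gamma_n$ equals $\Gamma_n$ for every $n\ge 1$. Applied to $H_v:=\langle\alpha(C):C\in\cC(v)\rangle$, whose image in $\Gamma_n$ is exactly $S_n(v)$, this yields the equivalence: $X_n$ is strongly connected for all $n$ iff $H_v$ topologically generates $\Gamma$ for all $v$.

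The main obstacle lies in the middle step: because $X$ is directed, one cannot simply reverse edges to form $P_0^{-1}P$ and encode the difference of two $u\to v$ paths as a cycle. The remedy is to use strong connectivity of $X$ to supply an auxiliary return path $Q$ and to exploit the finiteness of $\Gamma_n$ so that the multiplicatively closed subset $S_n(v)$ automatically contains inverses, and is therefore a genuine subgroup rather than a submonoid.
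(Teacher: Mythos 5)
Your proof is correct and follows essentially the same route as the paper's: lift a path of $X$ to move between fibers, use cycles at the target vertex to adjust the $\Gamma_n$-coordinate, and check topological generation level by level via the finite quotients $\Gamma_n$. The one place you are more careful than the paper is in noting explicitly that the set of cycle voltages at $v$ is a priori only a submonoid, and that finiteness of $\Gamma_n$ forces its image there to be a subgroup — a point the paper's proof uses implicitly when it asserts the existence of a cycle $C\in\cC(w)$ with a prescribed voltage $\alpha_n(C)=\tau'(\tau\alpha_n(P))^{-1}$.
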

\begin{proof}
Suppose that $\{\alpha(C)\mid C\in \cC(v)\}$ generates $\Gamma$ topologically for all $v\in V(X)$.
    Let $(v,\tau)$ and $(w,\tau')$ be two vertices in $X_n$. Let $P$ be  a path from $v$ to $w$ in $X$ (this $P$ exists as $X$ is assumed to be strongly connected). Further, $P$ induces a path in $X_n$ from $(v,\tau)$ to $(w,\tau \alpha_n(P))$. By the assumption on $\alpha$, we can find a directed cycle $C\in \cC(w)$ such that $\alpha_n(C)=\tau'(\tau\alpha_n(P))^{-1}$. This lifts to a path from $(w,\tau\alpha_n(P))$ to $(w,\tau')$. Thus, there is a path from $(v,\tau)$ to $(w,\tau')$. Hence, $X_n$ is strongly connected.

    Conversely, assume that $X_n$ is strongly connected. Let $\tau\in \Gamma_n$ and $v\in V(X)$. As $X_n$ is strongly connected, there exists a path from $(v,1)$ to $(v,\tau)$. The image of this path in $X$ is a directed cycle $C$ that passes $v$, with $\alpha_n(C)=\tau$. As this holds for an arbitrary choice of $n$ and $\tau$, it follows that the set $\{\alpha(C)\mid C\in \cC(v)\}$ topologically generates $\Gamma$.
\end{proof}

\section{Review on Iwasawa algebras}\label{sec:IwAlg}
In what follows, we write $\Phi_{p,d}=\ZZ[T_1,\ldots,T_d;\Zp]$ for the ring of polynomials in $T_1,\ldots,T_d$ whose coefficients are in $\Z$ and the exponents are in $\Zp$. In other words, the elements of $\Phi_{p,d}$ are $\ZZ$-linear combinations of monomials of the form
$T_1^{a_1}\cdots T_d^{a_d}$, where $a_{1},\ldots,a_{d} \in \mathbb{Z}_{p}$. Note that we can regard $\Phi_{p,d}$ as the ring $\mathbb{Z}[\mathbb{Z}_{p}^{d}]$.

Let $l$ be a prime number (which may or may not be equal to $p$), we fix an embedding $\iota_l:\overline\QQ\hookrightarrow\overline{\Ql}$. Let $\mu_{p^{\infty}}$ denote the set of $p$-power roots of unity in $\overline{\QQ}$. We denote by its image in $\overline{\Ql}$ under $\iota_l$ by the same image.

Recall that we write $\Gamma=\Zp^d$ and the group operation on $\Gamma$ is written multiplicatively. 
Let $\pi_i:\Gamma\to\Zp$ denote the projection map from $\Zp^d$ to the $i$-th component. Let $\Gamma^\vee$ denote the set of $\overline{\Q}_{l}^\times$-valued finite-order characters of $\Gamma$ (we suppress the dependency of $l$ in the notation for simplicity). We define $\Gamma_n^\vee$ similarly for all $n\ge1$. Note that $\Gamma^\vee=\bigcup_{n\ge1} \Gamma_n^\vee$. Let $\sR^{(l)}$ denote the orbits of $\Gamma^\vee$ under the action of the absolute Galois group $G_{\Ql}$.

Given any $\vec\zeta=(\zeta_1,\ldots,\zeta_d)\in \mu_{p^\infty}^d$, let
\[
\ev_{\vec\zeta}:\Phi_{p,d}\to \ZZ[\mu_{p^\infty}]
\]
be the evaluation map given $T_1^{a_1}\cdots T_d^{a_d}\mapsto \zeta_1^{a_1}\cdots \zeta_d^{a_d}$. 
We fix a topological generator $\gamma$ of $\Zp$. For $1\le i\le d$,  write $\gamma_i=(1,\dots, 1,\gamma,1,\dots,1)\in\Gamma$, where $\gamma$ appears in the $i$-th component.
If $\omega\in \Gamma^\vee$, we have the evaluation map $\ev_\omega=\ev_{(\omega(\gamma_1),\dots,\omega(\gamma_d))}$ on $\Phi_{p,d}$. If $F\in \Phi_{p,d}$, we shall write $\omega(F)$ for $\ev_{\omega}(F)$.

We define $\Lambda_{l}(\Gamma)$ as the Iwasawa algebra 
\[
\Lambda_{l}(\Gamma)=\varprojlim_n\Zl[\Gamma_n],
\]
where $\Gamma_n=\Gamma/\Gamma^{p^n}$ and the connecting maps are natural projections.

\begin{lemma}\label{lem:injectiverings}
    There is an injective homomorphism of rings
    \[
    \Phi_{p,d}\hookrightarrow\Lambda_{l}(\Gamma).
    \]
\end{lemma}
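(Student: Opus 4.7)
The plan is to realize $\Phi_{p,d}$ as the integral group ring $\mathbb{Z}[\Gamma]$, and then use the defining inverse-limit description of $\Lambda_l(\Gamma)$. Concretely, I would define $\phi$ on the generators by sending the monomial $T_1^{a_1}\cdots T_d^{a_d} = [a]$, with $a = (a_1,\ldots,a_d) \in \Gamma$, to the sequence $([\bar{a}_n])_n \in \varprojlim_n \mathbb{Z}_l[\Gamma_n] = \Lambda_l(\Gamma)$, where $\bar{a}_n$ denotes the image of $a$ in $\Gamma_n$. Compatibility with the transition maps is automatic because the projections $\Gamma_{n+1} \twoheadrightarrow \Gamma_n$ carry $\bar{a}_{n+1}$ to $\bar{a}_n$, and the integer coefficients embed into $\mathbb{Z}_l$ at every level. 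Extending $\mathbb{Z}$-linearly gives a well-defined additive map, and checking that it respects the ring structure reduces to the observation that $[a][b] = [ab]$ in each factor $\mathbb{Z}_l[\Gamma_n]$.

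The heart of the argument is injectivity. Given $F = \sum_{i=1}^k c_i [a_i] \in \Phi_{p,d}$ with the $a_i \in \Gamma$ pairwise distinct and $c_i \in \mathbb{Z}$, suppose $\phi(F) = 0$. Since $\Gamma = \mathbb{Z}_p^d$ is Hausdorff in its $p$-adic topology and the set $\{a_i\}$ is finite, there exists $n_0$ such that for all $n \ge n_0$ the images $\bar{a}_{1,n},\ldots,\bar{a}_{k,n} \in \Gamma_n$ are pairwise distinct. Evaluating the condition $\phi(F) = 0$ at any such level gives $\sum_i c_i [\bar{a}_{i,n}] = 0$ in $\mathbb{Z}_l[\Gamma_n]$. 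Because $\mathbb{Z}_l[\Gamma_n]$ is $\mathbb{Z}_l$-free with basis $\{[\sigma] : \sigma \in \Gamma_n\}$, each coefficient $c_i$ must vanish in $\mathbb{Z}_l$, and hence in $\mathbb{Z}$ since $\mathbb{Z} \hookrightarrow \mathbb{Z}_l$. Therefore $F = 0$, proving injectivity.

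The only genuine content of the proof is this separation step: finitely many distinct $p$-adic exponents become distinct modulo $p^n$ once $n$ exceeds the maximum $p$-adic valuation among the pairwise differences $a_i - a_j$. I do not anticipate any serious obstacle beyond this bookkeeping. It is worth noting that $\phi$ is far from surjective—elements of $\Lambda_l(\Gamma)$ allow genuine $l$-adic and infinite combinations—so the statement is only about a well-defined injection, and no density or topological completion arguments are required.
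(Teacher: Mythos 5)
Your proof is correct and follows essentially the same route as the paper: the paper factors the map through $\varprojlim_n \Phi_{p,d}/(T_1^{p^n}-1,\dots,T_d^{p^n}-1)$ and asserts that $\bigcap_n(T_1^{p^n}-1,\dots,T_d^{p^n}-1)=0$, which is exactly the separation fact you verify explicitly (finitely many distinct exponent vectors in $\Zp^d$ become distinct modulo $p^n$ for $n$ large). If anything, your version supplies the one detail the paper leaves unjustified.
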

\begin{proof}
    Given any integer $n\ge1$, there is a natural injection
    \[
    \Phi_{p,d}/\left(T_1^{p^n}-1,\dots T_d^{p^n}-1\right)\cong\Z[\Gamma_n]\hookrightarrow \Zl[\Gamma_n],
    \]
    after identifying $T_i$ with a topological generator of $\pi_i(\Gamma)$. This gives an injection, 
    \[
    \varprojlim_n \Phi_{p,d}/\left(T_1^{p^n}-1,\dots,T_d^{p^n}-1\right)\hookrightarrow \varprojlim_n\Zl[\Gamma_n]=\Lambda_{l}(\Gamma).
    \]
    Since
    \[
    \bigcap\left(T_1^{p^n}-1,\dots,T_d^{p^n}-1\right)=0,
    \]
    there is a natural injection
    \[
    \Phi_{p,d}\hookrightarrow  \varprojlim_n \Phi_{p,d}/\left(T_1^{p^n}-1,\dots,T_d^{p^n}-1\right),
    \]
    from which the lemma follows.
\end{proof}

We recall that if $p=l$ and $M$ is a finitely generated torsion $\Lambda_p(\Gamma)$-module, there is a pseudo-isomorphism between $M$ and a direct sum of cyclic $\Lambda_p(\Gamma)$-modules $\bigoplus_i \Lambda_p(\Gamma)/(F_i)$ for some $F_i\in\Lambda_p(\Gamma)$ (see \cite[Chapitre VII, no. 4, Théorème 5]{Bourbaki_commutative_algebra}). The characteristic ideal of $M$ is defined as
\[
\Char_{\Lambda_p(\Gamma)}M=\left(\prod_{i}F_i\right)\subseteq\Lambda_p(\Gamma)
\]
When $M$ is not torsion over $\Lambda_p(\Gamma)$, we set $$\Char_{\Lambda_p(\Gamma)}M=0.$$
Finally, we recall that after sending $\gamma_i$ to $T_i$, the Iwasawa algebra $\Lambda_p(\Gamma)$ can be identified with the ring of power series $\Zp[[T_1-1,\dots,T_d-1]]$.
\subsection{Characteristic ideals when $l\ne p$}
Throughout this subsection, we assume that $p\ne l$ are distinct prime numbers. 
 We have 
$$\Lambda_l(\Gamma)=\prod_{[\omega]\in\sR^{(l)}}\Lambda_l(\Gamma)_{[\omega]},$$
where $\Lambda_l(\Gamma)_{[\omega]}\subseteq\Lambda_l(\Gamma)$ is  a ring isomorphic to $\Zl[\omega(\Gamma)]$ (see \cite[Theorem 2.1]{BandiniLonghi}).

Let $\omega\in\Gamma^\vee$.  We define $e_{\omega,n}$ as the idempotent of $\omega$ if $\omega\in\Gamma_n^\vee$, and $0$ otherwise. Set $e_{[\omega],n}=\sum_{\omega\in[\omega]}e_{\omega,n}$. This defines an element $e_{[\omega]}=(e_{[\omega],n})\in \varprojlim \Zl[\Gamma_n]=\Lambda_l(\Gamma)$.
Given a $\Lambda_l(\Gamma)$-module $M$ and $[\omega]\in\sR^{(l)}$, we write $M_{[\omega]}=e_{[\omega]}M$.

If $M=\varprojlim_n M\otimes\Zl[\Gamma_n]$, we have the decomposition
\begin{equation}
\label{eq:decomp-mod}    
M\cong \prod_{[\omega]\in\sR^{(l)}}M_{[\omega]};
\end{equation}
see \cite[(13)]{BandiniLonghi}.

We say that a $\Lambda_l(\Gamma)$-module is locally finitely generated if each $M_{[\omega]}$ is a finitely generated over $\Lambda_l(\Gamma)_{[\omega]}$.
For such $M$, we write
\[
M_{[\omega]}=\left(\Lambda_l(\Gamma)_{[\omega]}\right)^{\oplus r_{[\omega]}}\oplus\bigoplus_{j=1}^{s_{[\omega]}}\Lambda_l(\Gamma)_{[\omega]}/I_{[\omega],j},
\]
where $r_{[\omega]}$ and $s_{[\omega]}$ are nonnegative integers and $I_{[\omega],j}$ are nonzero ideals that are uniquely determined by $M$ and $\omega$.
The $[\omega]$-characteristic ideal of $M$ is defined as
\[
\Char_{\Lambda_l(\Gamma)_{[\omega]}}\left(M_{[\omega]}\right)=\begin{cases}
    \prod_{j=1}^{s_{[\omega]}}I_{[\omega],j}& \text{if }r_{[\omega]}=0,\\
    (0_{[\omega]})&\text{otherwise.}
\end{cases}
\]
The characteristic ideal of $M$ is defined as
\[
\Char_{\Lambda_l(\Gamma)}(M)=\left(\Char_{\Lambda_l(\Gamma)_{[\omega]}}\left(M_{[\omega]}\right)\right)_{[\omega]\in\sR^{(l)}}\subseteq\Lambda_l(\Gamma).
\]
Note that if $\omega$ is of order $p^n$, then $\Lambda_l(\Gamma)_{[\omega]}\cong\Zl[\zeta_{p^n}]$. As $\Ql(\zeta_{p^n})/\Ql$ is unramified, $I_{[\omega],j}$ is generated by a power of $l$. In other words, $\Char_{\Lambda_l(\Gamma)_{[\omega]}}(M_{[\omega]})$ is of the form $l_{[\omega]}^{t_{[\omega]}}$ when $r_{[\omega]}=0$ (see \cite[Remark~4.8]{BandiniLonghi}).

We conclude this section with the following lemma, which will be used later in the article.

\begin{lemma}
\label{isomorphism-group rings}
Let  $\sR_n^{(l)}$ denote the set of classes $[\omega]\in\sR^{(l)}$ such that $\omega\in\Gamma_n^\vee$.     We have the following ring isomorphisms
    \[\Lambda_l(\Gamma)/(T_1^{p^n}-1,T_2^{p^n}-1,\dots , T_d^{p^n}-1)\cong\Zl[\Gamma_n]\cong \bigoplus_{[\omega]\in\sR_n^{(l)}}\Zl[\omega(\Gamma_n)]  \]
   \end{lemma}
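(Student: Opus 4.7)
The plan is to deduce both isomorphisms at once from the product decomposition
\[
\Lambda_l(\Gamma)=\prod_{[\omega]\in\sR^{(l)}}\Lambda_l(\Gamma)_{[\omega]}\cong\prod_{[\omega]\in\sR^{(l)}}\Zl[\omega(\Gamma)]
\]
recalled above from \cite[Theorem 2.1]{BandiniLonghi}. Writing $I_n=(T_1^{p^n}-1,\ldots,T_d^{p^n}-1)$, I would analyse the quotient $\Lambda_l(\Gamma)/I_n$ one factor at a time. Under the identification of Lemma~\ref{lem:injectiverings}, the indeterminate $T_i$ corresponds to $\gamma_i$, whose image in the $[\omega]$-factor is the root of unity $\omega(\gamma_i)$.

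If $\omega\in\Gamma_n^\vee$, then $\omega(\gamma_i)^{p^n}=1$ for every $i$, so $I_n$ acts by zero and the factor contributes $\Zl[\omega(\Gamma)]=\Zl[\omega(\Gamma_n)]$ to the quotient. If $\omega\notin\Gamma_n^\vee$, then $\omega(\Gamma)$ is a cyclic $p$-group of exponent strictly greater than $p^n$, so some $\omega(\gamma_i)^{p^n}$ is a nontrivial $p$-power root of unity $\zeta$; because $l\ne p$, the norm $N_{\Ql(\zeta)/\Ql}(\zeta-1)=p$ is a unit in $\Zl$, whence $\zeta-1$ is a unit in $\Zl[\omega(\Gamma)]$ and that factor collapses to zero. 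Taking the product gives
\[
\Lambda_l(\Gamma)/I_n\cong\bigoplus_{[\omega]\in\sR_n^{(l)}}\Zl[\omega(\Gamma_n)],
\]
which is the second displayed isomorphism of the lemma.

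For the first isomorphism, the universal property of the inverse limit $\Lambda_l(\Gamma)=\varprojlim_m\Zl[\Gamma_m]$ furnishes a canonical surjection $\Lambda_l(\Gamma)\twoheadrightarrow\Zl[\Gamma_n]$ whose kernel contains $I_n$ (since $\gamma_i^{p^n}=1$ in $\Gamma_n$), so there is an induced surjection $\Lambda_l(\Gamma)/I_n\twoheadrightarrow\Zl[\Gamma_n]$. The previous display shows the left-hand side is a free $\Zl$-module of rank
\[
\sum_{[\omega]\in\sR_n^{(l)}}[\Ql(\omega(\Gamma_n)):\Ql]=|\Gamma_n^\vee|=p^{nd},
\]
matching the rank of $\Zl[\Gamma_n]$; the surjection is therefore an isomorphism.

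The substantive step is the unit-versus-zero dichotomy in the componentwise analysis above, which rests on the hypothesis $l\ne p$ via the norm identity $N_{\Ql(\zeta)/\Ql}(\zeta-1)=p$ for nontrivial $p$-power $\zeta$. The remaining arguments are formal consequences of the Bandini--Longhi decomposition, the universal property of the inverse limit, and the rank count.
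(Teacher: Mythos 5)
Your argument is correct, but it is worth noting that the paper does not actually prove this lemma: it simply cites \cite[Remark 2.2 (4)]{BandiniLonghi}. What you have written is therefore a genuine self-contained proof where the paper defers to the reference, and the route you take — pushing the ideal $(T_1^{p^n}-1,\dots,T_d^{p^n}-1)$ through the product decomposition $\Lambda_l(\Gamma)\cong\prod_{[\omega]}\Zl[\omega(\Gamma)]$ and deciding, factor by factor, whether the generators die or become units — is the natural one and matches the spirit of the Bandini--Longhi construction. The key dichotomy is sound: for $\omega\in\Gamma_n^\vee$ the generators map to zero, and for $\omega\notin\Gamma_n^\vee$ some $\omega(\gamma_i)^{p^n}=\zeta$ is a nontrivial $p$-power root of unity, so $\zeta-1$ divides $p$ in $\Z[\zeta]$ (since $\prod_{j}(1-\xi^j)$ over nontrivial $p^m$-th roots equals $p^m$, or via $N_{\Q(\zeta)/\Q}(1-\zeta)=p$) and is hence a unit in the $\Zl$-algebra $\Zl[\omega(\Gamma)]$ because $l\ne p$; your phrasing ``$N_{\Ql(\zeta)/\Ql}(\zeta-1)=p$'' is slightly off, as the norm over $\Ql$ is only a partial product of the conjugates, but the conclusion is unaffected. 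Two routine points you leave implicit and could state: that a finitely generated ideal in an infinite product of rings is the product of its componentwise images, so the quotient really does split as you claim; and that the surjection $\Lambda_l(\Gamma)/I_n\twoheadrightarrow\Zl[\Gamma_n]$ of free $\Zl$-modules of equal finite rank $p^{nd}$ is injective because a surjective endomorphism of a finitely generated module over a Noetherian ring is an isomorphism. Neither is a gap.
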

\begin{proof}
    See \cite[Remark 2.2 (4)]{BandiniLonghi}.
\end{proof}

\section{$p$-adic functions attached to a $\Zp^d$-tower}
The main goal of this section is to define the $p$-adic zeta function and $p$-adic Bowen--Franks function of a $\Zp^d$-tower and study their interpolation properties. We begin with a review on the archimedean counterpart of these functions.

\begin{defn}
    Let $X$ be a digraph. A cycle $C$ in $X$ is said to be \textbf{prime} if it is not the $m$-multiple of another cycle for some integer $m\ge2$. Two cycles are said to be \textbf{equivalent} if one is a cyclic permutation of the other. An equivalence class of prime cycles under this equivalence relation is called a \textbf{prime} of $X$. If $[C]$ is a prime, where $C$ is a representative of the equivalence class, the length of $[C]$, denoted by $|[C]|$, is defined as the number of edges in $C$.

    We define the \textbf{zeta function} of $X$ as 
    \[
    Z_X(u)=\prod_{[C]}(1-u^{\vert [C]\vert})^{-1}\in\ZZ[[u]],
    \] where the product runs over all primes of $X$. 
\end{defn}
\begin{remark}
    Our definition of the zeta function follows the one given in \cite{ks} (denoted by $Z^\circ(u)$ therein). It follows from Lemma 2.2 in \textit{op. cit.} that it is equal to $\det(1-u A_X)^{-1}$. When $X$ is an undirected graph, this is different from the classical Ihara zeta function, since each undirected edge is represented by two directed edges under our convention.
\end{remark}

\begin{defn}
    Let $Y/X$ be a Galois covering. Let $P$ be a path of length $r$ in $X$ from $u$ to $v$. Let $P'$ be the unique path of length $r$  in $Y$ from $(u,1)$ to $(v,g)$, where $g\in \Gal(Y/X)$. The element $g$ is called the \textbf{Frobenius}  automorphism of $P$.
\end{defn}

For the rest of this section, we fix a $\Zp^d$-tower $(X_n)_{n\ge0}$ of strongly connected digraphs over a digraph $X$. The voltage assignment given by Lemma~\ref{lem:voltage} will be denoted by $\alpha$.

\begin{defn}
    Let $\omega\in\Gamma_n^\vee$. We define the \textbf{$L$-function} for $X_n/X$ and $\omega$ as
    \[L(X_n/X,\omega,u)=\prod_{[C]}(1-\omega([C])u^{\vert [C]\vert})^{-1}\in\ZZ[\omega(\Gamma_n)][[u]],\]
    where the product runs over all primes of $X$ and $\omega([C])$ denotes the image of the Frobenius of $[C]$ in $\Gamma_n=\Gal(X_n/X)$ under $\omega$.
\end{defn}

The following theorem can be regarded as the \textit{Artin formalism} for zeta functions of digraphs.
\begin{theorem}
\label{Artin-formalism}
   The following equality holds:
    \[Z_{X_n}(u)=\prod_{\omega\in\Gamma_n^\vee}L(X_n/X,\omega,u).\]
\end{theorem}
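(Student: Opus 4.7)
The plan is to compare the logarithmic expansions of both sides of the identity and bridge them via character orthogonality on the finite abelian group $\Gamma_n$.

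By the remark following the definition of the zeta function, $Z_{X_n}(u)=\det(1-uA_{X_n})^{-1}$, so
\[
\log Z_{X_n}(u)=\sum_{m\ge1}\frac{\Tr(A_{X_n}^m)}{m}u^m=\sum_{m\ge1}\frac{N_m(X_n)}{m}u^m,
\]
where $N_m(X_n)$ is the number of closed paths of length $m$ in $X_n$. For each $\omega\in\Gamma_n^\vee$, expanding $\log(1-\omega(C)u^{|C|})^{-1}$ and regrouping by total length $m=k|C|$, I would obtain
\[
\log L(X_n/X,\omega,u)=\sum_{m\ge 1}\frac{u^m}{m}\sum_{\substack{P \text{ closed in } X\\ |P|=m}}\omega(\Frob(P)),
\]
using three combinatorial inputs: every closed path of length $m$ in $X$ is uniquely a cyclic shift of $C^k$ for some prime class $[C]$ of length $d=m/k$; each such $[C]$ contributes exactly $d$ cyclic shifts; and $\Frob(C^k)=\Frob(C)^k$, so the weight $\omega(C)^k$ matches $\omega(\Frob(C^k))$.

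I would then translate $N_m(X_n)$ back to the base graph. A closed path in $X_n$ starting at $(v,g)$ projects to a closed path $P$ in $X$ with $\Frob(P)=1$, since the lift of $P$ starting at $(v,g)$ ends at $(v,g\cdot\Frob(P))$; conversely each such $P$ admits exactly $|\Gamma_n|$ lifts to $X_n$, one for each starting sheet $g\in\Gamma_n$. Hence
\[
N_m(X_n)=|\Gamma_n|\cdot|\{P\text{ closed in }X,\,|P|=m,\,\Frob(P)=1\}|.
\]
Applying character orthogonality $\mathbf{1}_{\sigma=1}=|\Gamma_n|^{-1}\sum_{\omega\in\Gamma_n^\vee}\omega(\sigma)$ to $\sigma=\Frob(P)$ yields
\[
N_m(X_n)=\sum_{\omega\in\Gamma_n^\vee}\sum_{\substack{P\text{ closed in }X\\|P|=m}}\omega(\Frob(P)).
\]
Summing the $L$-function identity over $\omega\in\Gamma_n^\vee$ and matching coefficients with $\log Z_{X_n}(u)$ gives the logarithmic form of the theorem, and exponentiating finishes the proof.

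The trickiest step is the combinatorial identification in the second display: one has to convert the product over equivalence classes of prime cycles (a quotient by cyclic permutation) into an unweighted sum over closed paths with a distinguished starting vertex, correctly absorbing the factor of $d$ from the cyclic shifts and verifying that the Frobenius weighting carries through multiplicatively on powers. Everything else is a direct application of Fourier inversion on $\Gamma_n$ together with the lifting criterion for closed paths through a Galois covering.
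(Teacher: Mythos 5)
Your proof is correct, but it follows a genuinely different route from the paper's. The paper argues directly on the Euler products: for each prime $[C]$ of $X$ it introduces the decomposition group $D_{[C]}\subseteq\Gamma_n$ generated by $\Frob([C])$, uses the elementary identity $\prod_{\omega\in D_{[C]}^\vee}\bigl(1-\omega([C])u^{|[C]|}\bigr)=1-u^{|[C]|\,|D_{[C]}|}$, and matches the result against the splitting of $[C]$ in $X_n$ (there are $|\Gamma_n/D_{[C]}|$ primes of $X_n$ above $[C]$, each of length $|[C]|\,|D_{[C]}|$). You instead pass to logarithms and use Fourier inversion on $\Gamma_n$: the identity $N_m(X_n)=\sum_{\omega\in\Gamma_n^\vee}\sum_{|P|=m}\omega(\Frob(P))$ obtained from $\mathbf{1}_{\sigma=1}=|\Gamma_n|^{-1}\sum_\omega\omega(\sigma)$ together with the count of closed lifts. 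Your first combinatorial step (converting the Euler product over prime classes into a sum over rooted closed paths, with the factor $|[C]|$ accounting for cyclic shifts) is in substance the paper's own Lemma~\ref{lem:L-exp} and equation~\eqref{relation-enumeration}, so that part is already available; what you add is the orthogonality step, which replaces the paper's analysis of how primes split in the cover. The trade-off: the paper's argument is purely multiplicative and stays in $\ZZ[\mu_{p^n}][[u]]$, while yours works with formal $\log$ and $\exp$ (harmless here, since all constant terms are $1$ and one may work in $\QQ(\mu_{p^n})[[u]]$) and is arguably closer to the linear-algebra fact $\det(1-uA_{X_n})=\prod_{\omega}\det(1-u\,\omega(A_\alpha))$ underlying Lemma~\ref{description-asmatrix}. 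One point to state explicitly if you write this up: the Frobenius of a closed path is independent of the chosen base sheet only because $\Gamma_n$ is abelian, which is what makes both $\Frob(C^k)=\Frob(C)^k$ and the ``exactly $|\Gamma_n|$ closed lifts'' count clean.
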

\begin{proof}
Let $G=\Gal(X_n/X)$. Let $[C]$ be a prime of $X$. We denote by $D_{[C]}\subset G$ the subgroup generated by the Frobenius of $[C]$. Note that one requires $|D_{[C]}|$ distinct lifts of $[C]$ in order to obtain a cycle in $X_n$ consisting solely of lifts of $[C]$.

We have
    \begin{align*}
        \prod_{\omega\in{\Gamma_n^\vee}}L(X_n/X,\omega,u)&=\prod_\omega\prod_{[C]}(1-\omega([C])u^{\vert [C]\vert})^{-1}\\
        &=\prod_{[C]}\prod_{\omega\in (D_{[C]})^\vee}(1-\omega([C])u^{\vert[C]\vert})^{-\vert G/D_{[C]} \vert}\\
       &=\prod_{[C]}(1-u^{\vert [C]\vert \vert D_{[C]}\vert})^{-\vert G/D_{[C]} \vert}\\
       &=Z_{X_n}(u).
    \end{align*}
    The last equality follows from the following fact: Let $[C']$ be a prime of $X_n$. Then its image in $X$ is the $f$-th power of a prime $[C]$ of $X$ for some integer $f$. Note that $\vert [C']\vert=\vert [C]\vert f$ and $f=\vert D_{[C]}\vert$. Furthermore, there are exactly $\vert G/D_{[C]}\vert $ primes of $X_n$ that are mapped to $[C]^f$ in $X$. 
\end{proof}
The following discussion is inspired by \cite[Section 2]{ks}.
We fix $\omega\in\Gamma_n^\vee$. For any positive integers $r$ and $m$, we define
\[M_r^m(\omega)=\sum_{[C]} \omega([C])^m,\]
where the sum runs over all primes of $X$ of length $r$. We further define
\[N_r(\omega)=\sum_D\omega([D]),\]
where the sum runs over all cycles in $X$ (up to equivalence) of length $r$.

Note that 
\begin{equation}
    \label{relation-enumeration}
    N_v(\omega)=\sum_{mr=v}lM^m_r(\omega). 
\end{equation}
\begin{lemma}\label{lem:L-exp}
We have the following equality of power series in $u$:
    \[L(X_n/X,\omega,u)=\exp\left(\sum_{v\ge 1}\frac{1}{v}N_v(\omega)u^v\right).\]
\end{lemma}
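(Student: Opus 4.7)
The plan is to prove the identity by taking formal logarithms, expanding the Euler product of $L(X_n/X,\omega,u)$, and identifying the coefficient of $u^v$ with $N_v(\omega)/v$ using (\ref{relation-enumeration}). Since the right-hand side is $\exp$ of a power series with vanishing constant term, it suffices to show
\[\log L(X_n/X,\omega,u) = \sum_{v\ge 1}\frac{N_v(\omega)}{v}\,u^v\]
as an equality in $\overline{\Q}[\omega(\Gamma_n)][[u]]$.

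First, I would start from the Euler product $L(X_n/X,\omega,u) = \prod_{[C]}(1-\omega([C])u^{|[C]|})^{-1}$ and apply the standard expansion $-\log(1-x) = \sum_{m\ge 1} x^m/m$ to each factor, obtaining
\[\log L(X_n/X,\omega,u) = \sum_{[C]}\sum_{m\ge 1}\frac{\omega([C])^m}{m}u^{m|[C]|}.\]
Next, I would regroup primes by length, writing $r = |[C]|$ and collecting contributions with fixed total exponent $v = mr$:
\[\log L(X_n/X,\omega,u) = \sum_{v\ge 1}u^v\sum_{mr=v}\frac{M_r^m(\omega)}{m}.\]

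To recognise the inner sum, I would multiply and divide by $v$ and use the identity $v/m = r$ to rewrite the coefficient of $u^v$ as $\tfrac{1}{v}\sum_{mr=v} r\,M_r^m(\omega)$, which is exactly $N_v(\omega)/v$ by (\ref{relation-enumeration}). Comparing with the target formula and exponentiating yields the lemma.

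No step is genuinely difficult: the argument is a purely formal manipulation of power series. The only point requiring any care is the reindexing $v/m = r$, which is what converts the analytic weight $1/m$ produced by the Taylor series of $-\log(1-x)$ into the combinatorial weight appearing on the right-hand side of (\ref{relation-enumeration}). This is the direct digraph analogue of the classical passage between the Euler product and the logarithmic power series expansion of the Ihara zeta function, and also the analogue of the Dirichlet series expansion of Dedekind zeta functions.
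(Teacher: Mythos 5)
Your proposal is correct and follows essentially the same route as the paper: take the formal logarithm of the Euler product, expand each factor, regroup by total exponent $v=mr$, and convert the weight $1/m$ into $r/v$ so that (\ref{relation-enumeration}) identifies the coefficient of $u^v$ as $N_v(\omega)/v$. The paper's proof is this exact computation written out as a chain of equalities.
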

\begin{proof}
    We have
    \begin{align*}
        \log(L(X/Y,\omega,u))&=-\sum_{[C]}\log(1-\omega([C])u^{\vert [C]\vert})\\
        &=\sum_{[C]}\sum_{m\ge 1} \frac{1}{m}\omega([C])^mu^{m\vert [C]\vert}\\
        &=\sum_{m\ge 1}\sum_{r\ge 1}\frac{1}{m}u^{rm}\sum_{\vert[C]\vert=r}(\omega[C])^m\\
        &=\sum_{m\ge 1}\sum_{r\ge 1}\frac{1}{m}M_r^m(\omega)u^{mr}\\
        &=\sum_{v\ge 1}\frac{1}{v}u^v\sum_{rm=v}rM_r^m(\omega)\\
        &=\sum_{v\ge 1}\frac{1}{v}N_v(\omega)u^v,
    \end{align*}
    where the last equation follows from \eqref{relation-enumeration}.
\end{proof}

\begin{lemma}
\label{description-asmatrix}Let $A_w=\omega(A_\alpha)\in \textup{Mat}_{r\times r}(\Z[\omega(\Gamma)])$. Then 
    \[L(X_n/X,\omega,u)=\det(\mathrm{Id}-A_\omega u)^{-1}.\]
\end{lemma}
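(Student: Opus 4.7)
The plan is to reduce the identity to a trace computation and then match it against the exponential formula for $L(X_n/X,\omega,u)$ already established in Lemma~\ref{lem:L-exp}. Starting from the standard formal power series identity
\[
\det(\mathrm{Id}-A_\omega u)^{-1}=\exp\left(\sum_{v\ge1}\frac{1}{v}\mathrm{tr}(A_\omega^v)u^v\right),
\]
(obtained by applying $-\log$ to both sides and using $\log\det=\mathrm{tr}\log$ on the triangularization of $A_\omega$), and comparing with Lemma~\ref{lem:L-exp}, the proposition reduces to showing that $\mathrm{tr}(A_\omega^v)=N_v(\omega)$ for every $v\ge 1$.

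Next I would unwind the matrix trace combinatorially. By definition of $A_\alpha$, the $(i,j)$-entry of $A_\omega$ is $\sum_{e:\,o(e)=v_j,\,t(e)=v_i}\omega(\alpha(e))$, so
\[
(A_\omega^v)_{ii}=\sum_{(e_1,\dots,e_v)}\omega(\alpha(e_1)\cdots\alpha(e_v)),
\]
where the sum is taken over all ordered sequences of edges with $t(e_k)=o(e_{k+1})$ and $t(e_v)=o(e_1)=v_i$. Summing over $i$, I obtain
\[
\mathrm{tr}(A_\omega^v)=\sum_{D}\omega(\alpha(D)),
\]
where $D$ runs over all length-$v$ cycles in $X$ (in the sense of ordered sequences, not yet modded out by cyclic permutation).

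Finally I would group these sequences by their equivalence class. Every equivalence class of length-$v$ cycles is of the form $[C^m]$ for a unique prime $[C]$ of some length $r$ with $mr=v$, and such a class contains exactly $r$ distinct sequence-representatives (namely the $r$ cyclic shifts within a single period of $C^m$). Since $\omega(\alpha(\cdot))$ is constant on an equivalence class, this regrouping yields
\[
\mathrm{tr}(A_\omega^v)=\sum_{mr=v}r\sum_{[C]\text{ prime},\,|[C]|=r}\omega(C)^m=\sum_{mr=v}rM_r^m(\omega)=N_v(\omega),
\]
where the last equality is exactly the relation \eqref{relation-enumeration}. The main obstacle in carrying out this argument is precisely this last bookkeeping step: verifying that the number of distinct sequence-representatives of $[C^m]$ is $r$ and not $v=mr$ or $1$, since it is this factor of $r$ that reconciles the trace (which sees each sequence) with $N_v(\omega)$ (which sees each equivalence class once) via \eqref{relation-enumeration}.
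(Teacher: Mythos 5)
Your proof is correct and follows essentially the same route as the paper's: both identify $\mathrm{tr}(A_\omega^v)$ with $N_v(\omega)$ by counting rooted closed walks and then conclude via Lemma~\ref{lem:L-exp}, the paper phrasing the final step through the eigenvalues of $A_\omega$, which is precisely the standard derivation of the identity $\det(\mathrm{Id}-A_\omega u)^{-1}=\exp\bigl(\sum_{v\ge1}\mathrm{tr}(A_\omega^v)u^v/v\bigr)$ that you quote. Your careful bookkeeping of the factor $r$ (each equivalence class $[C^m]$ having exactly $r$ rooted representatives) is exactly the content of \eqref{relation-enumeration}, which the paper invokes more tersely.
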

\begin{proof}
The matrix-entry $(A_\omega^v)_{i,j}$ is equal to $\sum_{[P]}\omega([P])$, where the sum runs over equivalence classes of all paths of length $v$ from $v_j$ to $v_i$. In particular, $\textup{Tr}(A_\omega^v)=N_v(\omega)$. 
    If $\lambda_1,\dots ,\lambda_r$ are the eigenvalues of $A_\omega$ counted with multiplicity, then  $$ N_v(\omega)=\sum_{i=1}^r\lambda_i^v.$$ Combined with Lemma~\ref{lem:L-exp}, we deduce
    \begin{align*}
        L(X/Y,\omega,u)&=\exp\left( \sum_{v\ge 1}\frac{1}{v}N_v(\omega)u^v\right) \\
        &=\exp\left( \sum_{v\ge 1}\frac{1}{v}\sum_{i=1}^N\lambda_i^vu^v\right)\\
        &=\prod_{i=1}^r\exp(-\log(1-\lambda_i u))=\prod_{i=1}^r\frac{1}{1-\lambda_i u}\\
        &=\det(1-A_\omega u)^{-1},
    \end{align*}
    as desired.
\end{proof}

\begin{defn}\label{def:MXalpha}
 We define the \textbf{$p$-adic zeta function} of a $\Zp^d$-tower of strongly connected digraphs over $X$, with voltage assignment $\alpha$, by
    \[
    L_p(X,\alpha):=\det(D_X-A_\alpha)\in\Phi_{p,d},
    \]
    where $D_X$ is the diagonal degree matrix given as in Definition~\ref{def:div} and $A_\alpha$ is the $r\times r$ matrix whose $(i,j)$-entry is given by $\displaystyle\sum_e\prod_{i=1}^dT_i^{\pi_i\circ\alpha(e)}$, where the sum runs over all directed edges from $v_j$ to $v_i$.

Similarly,  we define the \textbf{$p$-adic Bowen--Franks function} as $$\mathcal{L}_p(X,\alpha):=\det(\textup{Id}-A_\alpha)\in\Phi_{p,d}.$$
\end{defn}
By Lemma~\ref{lem:injectiverings}, we can regard $L_p(X,\alpha)$ and $\mathcal{L}_p(X,\alpha)$ as elements of $\Lambda_l(\Gamma)$ for all primes $l$.

Let $q\ge1$ be an integer. We say that a digraph $X$ is \textit{out-regular of degree} $q$ if $|\Out(v)|=q$ for all $v\in V(X)$. We have the following interpolation formuale.
\begin{proposition}
\label{interpolation}
    Let $\omega$ be a finite order character of $\Gamma$. Assume that $\omega$ is a character of $\Gamma_n$. Then 
    \[\omega(\mathcal{L}_p(X,\alpha))=L(X_n/X,\omega,1)^{-1}.\]
    If $X$ is in addition out-regular of degree $q$, we have
    \[\omega(L_p(X,\alpha))=q^{r}L(X_n/X,\omega,q^{-1})^{-1},\]
    where $r$ is the order $X$
\end{proposition}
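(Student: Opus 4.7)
The plan is to reduce both interpolation formulae to Lemma~\ref{description-asmatrix}, which identifies $L(X_n/X,\omega,u)^{-1}$ with $\det(\mathrm{Id}-A_\omega u)$. The main work is to verify that evaluating the matrix $A_\alpha \in \mathrm{Mat}_{r\times r}(\Phi_{p,d})$ entrywise at $\omega$ recovers the matrix $A_\omega$ appearing in that lemma; once this is in hand, both formulae follow by straightforward determinant manipulations.

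First I would unpack the evaluation. Writing $\alpha(e) = \gamma_1^{\pi_1 \circ \alpha(e)}\cdots\gamma_d^{\pi_d \circ \alpha(e)} \in \Gamma$, the definition $\ev_\omega(T_i) = \omega(\gamma_i)$ gives
\[
\ev_\omega\!\left(\prod_{i=1}^d T_i^{\pi_i\circ\alpha(e)}\right)=\prod_{i=1}^d \omega(\gamma_i)^{\pi_i\circ\alpha(e)}=\omega(\alpha(e)).
\]
Applying this to the $(i,j)$-entry of $A_\alpha$ from Definition~\ref{def:MXalpha} and comparing with the description of $A_\omega$ given in the proof of Lemma~\ref{description-asmatrix} (namely $(A_\omega)_{i,j}=\sum_e \omega(\alpha(e))$, summed over edges from $v_j$ to $v_i$), one sees $\omega(A_\alpha)=A_\omega$ coefficientwise.

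For the first formula, since $\ev_\omega$ is a ring homomorphism $\Phi_{p,d} \to \Z[\omega(\Gamma)]$ it commutes with determinants, so
\[
\omega(\mathcal L_p(X,\alpha))=\omega\!\bigl(\det(\mathrm{Id}-A_\alpha)\bigr)=\det(\mathrm{Id}-A_\omega)=L(X_n/X,\omega,1)^{-1}
\]
by Lemma~\ref{description-asmatrix} specialised at $u=1$. For the second formula, out-regularity of degree $q$ means $D_X = q\,\mathrm{Id}$, hence
\[
L_p(X,\alpha)=\det(q\,\mathrm{Id}-A_\alpha)=q^r\det(\mathrm{Id}-q^{-1}A_\alpha).
\]
Evaluating at $\omega$ and using $\omega(A_\alpha)=A_\omega$ together with Lemma~\ref{description-asmatrix} at $u=q^{-1}$ yields
\[
\omega(L_p(X,\alpha))=q^r\det(\mathrm{Id}-q^{-1}A_\omega)=q^r L(X_n/X,\omega,q^{-1})^{-1}.
\]

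The only mild obstacle is the bookkeeping in the first paragraph, namely being careful that the identification $\Phi_{p,d}\hookrightarrow\Lambda_l(\Gamma)$ of Lemma~\ref{lem:injectiverings} sends each $T_i$ to a topological generator of $\pi_i(\Gamma)$ compatibly with the choice of $\gamma_i$ used to define $\ev_\omega$; once this is nailed down, the proof is a one-line determinantal manipulation per case.
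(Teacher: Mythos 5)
Your proposal is correct and follows essentially the same route as the paper's proof: both reduce the two formulae to Lemma~\ref{description-asmatrix} via the identification $\omega(A_\alpha)=A_\omega$ and a determinant manipulation using $D_X=q\,\mathrm{Id}$. Your extra care in verifying that entrywise evaluation of $A_\alpha$ at $\omega$ recovers $A_\omega$ is a detail the paper leaves implicit, but it is not a different argument.
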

\begin{proof}
The first assertion follows from Lemma~\ref{description-asmatrix} after setting $u=1$.
    
    For the second assertion, we have $D_X=q\Id$ since $X$ is assumed to be out-regular of degree $q$. Therefore
    \begin{align*}\omega(L_p(X,\alpha))&=\det(\omega(D_X-A_\alpha))=q^{r}\det(\omega(\Id-q^{-1}A_\alpha))=q^r\det(\Id-q^{-1}A_\omega)\\&=q^rL(X_n/X,\omega,q^{-1})^{-1},\end{align*}
    where the last equality follows from Lemma \ref{description-asmatrix}.
\end{proof}
\begin{remark}
In the case of undirected graphs, we have the following alternative definition of $L$-functions, known as the Artin--Ihara $L$-function. Let $Y/X$ be a Galois covering of connected undirected graphs, with Galois group $G$ that is abelian. Let $\omega$ be a character of $G$. We define
\[
L_X(u,\omega)=\frac{(1-u^2)^{\chi(X)}}{h_X(u,\omega)}\in\ZZ[\omega(G)][[u]],
\]
    where $\chi(X)$ is the Euler characteristic of $X$ and
    \[
    h_X(u,\omega)=\det(\mathrm{Id}-A_\omega u+(D_X-\mathrm{Id})u^2)\in\ZZ[\psi(G)][u].
    \]
    Here, $A_\omega$ is the adjacency matrix of $X$ twisted by $\omega$ (see \cite[\S2.4]{DV} for further details). It then follows that
    \[
    h_X(1,\omega)=\det(D_X-A_\omega).
    \]
   In other words, the $p$-adic zeta function attached to a $\Zp^d$-tower of undirected graphs interpolates the values $h_X(1,\omega)$ as $\omega$ varies over $\Gamma^\vee$.
   
It is not clear to the authors whether $L_p(X,\alpha)$ satisfies similar properties in the directed case if the digraphs are not out-regular. 
\end{remark}

\section{Iwasawa main conjectures for $\Z_p^d$-towers}
Throughout this section, we fix a $\Z^d_p$-tower of strongly connected digraphs
\[X\leftarrow X_1\leftarrow X_2\leftarrow \dots \leftarrow X_n\leftarrow \dots\]
 We also fix a prime number $l$ that may or may not be equal to $p$.

\begin{defn}
We denote by $\Div_l(X_n)$ the $\Z_l$-module $\textup{Div}(X_n)\otimes_\Z \Z_l$. Similarly, we define $\Pr_l(X_n)$, $\pic_l(X_n)$, and $\BF_l(X_n)$.  Define $\pic_l(X_\infty):=\displaystyle\varprojlim_n \pic_l(X_n)$ and $\BF_l(X_\infty):=\displaystyle\varprojlim_n\BF_l(X_n)$.
\end{defn}

\begin{defn}
    Let $m\ge n$ be non-negative integers. We define $I^m_n\subset \Z_l[\Gamma_m]$ as the kernel of the natural projection $\Z_l[\Gamma_m]\to \Z_l[\Gamma_n]$. Similarly, we define $I^\infty_n$ as the kernel of $\Z_l[[\Gamma]]\to \Z_l[\Gamma_n]$. 
\end{defn}
\begin{lemma}\label{lem:control}
For all non-negative integers $m\ge n$, there are isomorphisms
    \begin{align*}
        \pic_l(X_m)/I_n^m\pic_l(X_m)&\cong \pic_l(X_n),\\
\BF_l(X_m)/I_n^m\BF_l(X_m)&\cong \BF_l(X_n).
    \end{align*}
\end{lemma}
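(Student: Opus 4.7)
The plan is to give a single uniform argument based on realizing both $\pic_l(X_n)$ and $\BF_l(X_n)$ as cokernels of matrices with entries in $\Zl[\Gamma_n]$, and then use right-exactness of tensor product to conclude.

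First I would identify the module structure. Let $r$ be the order of $X$ and fix an enumeration $v_1,\dots,v_r$ of $V(X)$. Since $X_m = X(\Gamma_m,\alpha_m)$, the vertex set of $X_m$ is $V(X)\times\Gamma_m$, and the deck group $\Gamma_m$ acts on $\Div_l(X_m)$ by permuting the second coordinate. This identifies
\[
\Div_l(X_m)\ \cong\ \bigoplus_{i=1}^r \Zl[\Gamma_m]\cdot (v_i,1),
\]
a free $\Zl[\Gamma_m]$-module of rank $r$. Because the covering is locally an isomorphism, the out-degree of $(v_i,g)$ in $X_m$ equals $d_{v_i}$, and the definition of $\alpha_m$ gives
\[
P_{(v_i,g)}\ =\ d_{v_i}(v_i,g)-\sum_{e\in\Out(v_i)}(t(e),g\cdot\alpha_m(e))\ =\ g\cdot P_{(v_i,1)}.
\]
Hence $\Pr_l(X_m)$ is the $\Zl[\Gamma_m]$-submodule of $\Div_l(X_m)$ generated by $P_{(v_1,1)},\dots,P_{(v_r,1)}$. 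In matrix form, this means $\Pr_l(X_m)$ is the image of $\Zl[\Gamma_m]^r$ under the $r\times r$ matrix $L_m := D_X - A_{\alpha_m}$, where $A_{\alpha_m}$ is the image of $A_\alpha$ (from Definition~\ref{def:MXalpha}) under the projection $\Phi_{p,d}\to\Zl[\Gamma_m]$. Thus we have a presentation
\[
\Zl[\Gamma_m]^r \xrightarrow{\ L_m\ } \Zl[\Gamma_m]^r \longrightarrow \pic_l(X_m)\longrightarrow 0,
\]
and analogously for $X_n$ using $L_n=D_X-A_{\alpha_n}$.

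Next I would exploit the compatibility of the voltage assignments. Since $\alpha_m\equiv \alpha_n\pmod{\Gamma^{p^n}}$ by construction, the matrix $L_m$ maps entrywise to $L_n$ under the projection $\Zl[\Gamma_m]\twoheadrightarrow\Zl[\Gamma_n]$ whose kernel is $I_n^m$. Tensoring the above presentation with $\Zl[\Gamma_n]=\Zl[\Gamma_m]/I_n^m$ over $\Zl[\Gamma_m]$ and using right-exactness of $-\otimes_{\Zl[\Gamma_m]}\Zl[\Gamma_n]$ yields the exact sequence
\[
\Zl[\Gamma_n]^r \xrightarrow{\ L_n\ } \Zl[\Gamma_n]^r \longrightarrow \pic_l(X_m)/I_n^m\pic_l(X_m)\longrightarrow 0.
\]
Comparing with the presentation for $\pic_l(X_n)$ gives the first isomorphism.

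For the Bowen--Franks statement I would repeat the same argument verbatim, replacing the Laplacian matrix $L_m = D_X - A_{\alpha_m}$ by the Bowen--Franks matrix $\mathrm{BF}_m := \mathrm{Id} - A_{\alpha_m}$; the presentation
\[
\Zl[\Gamma_m]^r \xrightarrow{\ \mathrm{BF}_m\ } \Zl[\Gamma_m]^r \longrightarrow \BF_l(X_m)\longrightarrow 0
\]
follows directly from Definition~\ref{def:BF} together with the $\Zl[\Gamma_m]$-module identification above, and reduction modulo $I_n^m$ produces the presentation of $\BF_l(X_n)$. I do not anticipate any substantive obstacle here: the only slightly delicate point is to confirm carefully that under the identification $\Div_l(X_m)\cong \Zl[\Gamma_m]^r$, the action of the Laplacian (respectively the Bowen--Franks operator) on vertices of $X_m$ really coincides with left multiplication by $L_m$ (respectively $\mathrm{BF}_m$) once one keeps track of the convention on the target of an edge $(e,g)$ in the derived digraph.
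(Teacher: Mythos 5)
Your proposal is correct, and it reaches the conclusion by a route that differs in packaging from the paper's. The paper proves the lemma by a direct diagram chase: it places the short exact sequences $0\to\Pr_l(X_m)\to\Div_l(X_m)\to\pic_l(X_m)\to0$ and their level-$n$ analogues in a commutative diagram with surjective vertical maps, identifies the kernel of the middle map as $I_n^m\Div_l(X_m)$, and applies the snake lemma (and likewise with $\mathrm{BF}_{X_m}\cdot\Div_l(X_m)$ in place of $\Pr_l(X_m)$). You instead first establish the explicit quadratic presentation $\Zl[\Gamma_m]^r\xrightarrow{D_X-A_{\alpha_m}}\Zl[\Gamma_m]^r\to\pic_l(X_m)\to0$ via the equivariance $P_{(v_i,g)}=g\cdot P_{(v_i,1)}$, and then reduce modulo $I_n^m$ using right-exactness of the tensor product together with the compatibility $\alpha_m\equiv\alpha_n\pmod{\Gamma^{p^n}}$ from Lemma~\ref{lem:voltage}. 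Both arguments rest on the same structural facts (Galois-equivariance of the Laplacian and Bowen--Franks operators, surjectivity of the reduction maps), but your version does a bit more work up front and in exchange delivers the presentation that the paper only establishes later, in the proof of Theorem~\ref{char-udir}; in effect you have merged that theorem's computation into the control lemma. The paper's snake-lemma argument is shorter if one only wants the control statement. Your closing caveat about checking that the Laplacian really is left multiplication by $D_X-A_{\alpha_m}$ under the identification $\Div_l(X_m)\cong\Zl[\Gamma_m]^r$ is exactly the right point to verify, and your computation of $P_{(v_i,g)}$ settles it: the coefficient of $(v_i,1)$ in $P_{(v_j,1)}$ is $d_{v_j}\delta_{ij}-\sum_{e\colon v_j\to v_i}\alpha_m(e)$, matching the $(i,j)$-entry of $D_X-A_{\alpha_m}$ per Definition~\ref{def:MXalpha}.
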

\begin{proof}
    Consider the following commutative diagram with exact rows
    \[\begin{tikzcd}
        0\arrow[r] &\Pr_l(X_m)\arrow[r]\arrow[d]&\Div_l(X_m)\arrow[d]\arrow[r]&\textup{Pic}_l(X_m)\arrow[d]\arrow[r]&0\\
         0\arrow[r] &\Pr_l(X_n)\arrow[r]&\Div_l(X_n)\arrow[r]&\textup{Pic}_l(X_n)\arrow[r]&0.
    \end{tikzcd}\]
    By definitions, all three vertical maps are surjective. The kernel of the middle vertical map is given by $I_n^m\Div_l(X_m)$. Hence, the first asserted isomorphism follows from the snake lemma. The second follows from the same argument by considering the following commutative diagram:
    \[\begin{tikzcd}
        0\arrow[r] &\mathrm{BF}_{X_m}\cdot\Div_l(X_m)\arrow[r]\arrow[d]&\Div_l(X_m)\arrow[d]\arrow[r]&\BF_l(X_m)\arrow[d]\arrow[r]&0\\
         0\arrow[r] &\mathrm{BF}_{X_n}\cdot\Div_l(X_n)\arrow[r]&\Div_l(X_n)\arrow[r]&\BF_l(X_n)\arrow[r]&0.
    \end{tikzcd}\]    
\end{proof}

\begin{theorem}
\label{char-udir}
Let $r$ be the order of $X_0$. The $\Lambda_l(\Gamma)$-modules ${\rm Pic}_{l}(X_{\infty})$ and $\BF_l(X_\infty)$ are isomorphic to the quotients $\Lambda^r/\left(D_X-A_\alpha\right)$ and $\Lambda^r/\left(\Id-A_\alpha\right)$, respectively.
\end{theorem}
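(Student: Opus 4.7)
The plan is to establish the stated isomorphisms at each finite level $n$ and then pass to the inverse limit using Lemma~\ref{lem:control}. Concretely, I will prove
\[
\pic_l(X_n) \cong \Zl[\Gamma_n]^r/(D_X - A_{\alpha_n})\Zl[\Gamma_n]^r \quad\text{and}\quad \BF_l(X_n)\cong\Zl[\Gamma_n]^r/(\Id - A_{\alpha_n})\Zl[\Gamma_n]^r,
\]
where $A_{\alpha_n}$ is the image of $A_\alpha$ in $\mathrm{Mat}_{r\times r}(\Zl[\Gamma_n])$. The first step is to identify $\Div_l(X_n)$ with $\Zl[\Gamma_n]^r$ as a $\Zl[\Gamma_n]$-module. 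Since $V(X_n)=V(X)\times\Gamma_n$ by Lemma~\ref{lem:voltage} and $\Gamma_n$ acts freely and transitively by translation on the second coordinate, the lifts $\{(v_i,1)\}_{i=1}^r$ form a $\Zl[\Gamma_n]$-basis, sending $(v_i,g)\mapsto g\mathbf{e}_i$, where $\mathbf{e}_i$ is the $i$-th standard basis vector.

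Next, I express the defining relations in this basis. By the derived digraph construction, the out-neighbours of $(v_j,g)$ in $X_n$ are exactly $\{(t(e), g\cdot\alpha_n(e)) : e\in\Out(v_j)\}$, so the principal divisor $P_{(v_j,g)}$ is sent under the identification to
\[
g\biggl(d_{v_j}\mathbf{e}_j - \sum_{e\in\Out(v_j)}\alpha_n(e)\mathbf{e}_{t(e)}\biggr),
\]
which is precisely $g$ times the $j$-th column of $D_X - A_{\alpha_n}$. Letting $g$ and $j$ vary, the submodule $\Pr_l(X_n)$ coincides with the image of $D_X - A_{\alpha_n}$ acting on $\Zl[\Gamma_n]^r$, yielding the Picard isomorphism at level $n$. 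The same computation with $d_{v_j}$ replaced by $1$ throughout handles the Bowen--Franks case.

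Finally, these finite-level identifications are compatible with the transition maps $\Zl[\Gamma_{n+1}]\twoheadrightarrow\Zl[\Gamma_n]$, since $\alpha_{n+1}$ reduces to $\alpha_n$ modulo $p^n$ by the construction of $\alpha$ in Lemma~\ref{lem:voltage}. Lemma~\ref{lem:control} guarantees surjective transition maps on $\pic_l(X_n)$ and $\BF_l(X_n)$, so the Mittag--Leffler condition holds, and taking $\varprojlim_n$ yields the asserted isomorphisms at infinite level. The main obstacle is the bookkeeping in the middle step: one must carefully track how the voltage assignment governs the combinatorial structure of the covering, so that the principal divisors scattered across the $p^{nd}r$ vertices of $X_n$ assemble into columns of a single $r\times r$ matrix over $\Zl[\Gamma_n]$. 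Once this translation is in place, the remaining steps are formal.
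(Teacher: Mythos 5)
Your proposal is correct and follows essentially the same route as the paper's proof: identify $\Div_l(X_n)$ with $\Zl[\Gamma_n]^r$ via the basis of lifts $(v_i,1)$, observe that the principal divisors (resp. the image of the Bowen--Franks operator) correspond to the $\Zl[\Gamma_n]$-span of the columns of $D_X-A_{\alpha_n}$ (resp. $\Id-A_{\alpha_n}$), and pass to the inverse limit. Your write-up is if anything more explicit than the paper's about the voltage-assignment bookkeeping and the passage to the limit.
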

\begin{proof}
 Recall that the vertices of $X_n$ are of the form $(v,g)$, where $v\in V(X)$, $g\in \Gamma_n$. Thus, there is a $\Zl[\Gamma_n]$-isomorphism $\Div_l(X_n)\cong \displaystyle\bigoplus_{v\in V(X)}\Z_l[\Gamma_n](v,1)$, where the action of $g\in\Gamma_n$ is given by $g\cdot(v,1)=(v,g)$. Similarly, $\Pr_l(X_n)$ is isomorphic to the $\Z_l[\Gamma_n]$-module generated by the set of divisors $\{(D_X-A_\alpha)(v,1)\mid v\in V(X)\}$. It follows that $\pic_l(X_n)$ is isomorphic to the quotient $\Zl[\Gamma_n]^r/\left(D_X-A_\alpha\right)$. Taking projective limits gives the $\Lambda$-isomorphism
    \[
    \Pic_l(X_\infty)\cong \Lambda^r/\left(D_X-A_\alpha\right),
    \]
    as desired. The proof for Bowen--Franks groups can be carried out analogously.
\end{proof}

\begin{corollary}
\label{projection-fin-level} Assume that $l\neq p$.
   Let $[\omega]\in \sR^{(l)}$. If $n$ is an integer such that $\omega\in \Gamma_n^\vee$, then  
    \[e_{[\omega]} \pic_l(X_\infty)=e_{[\omega],n}\pic_l(X_n)\]
    and
    \[e_{[\omega]} \BF_l(X_\infty)=e_{[\omega],n}\BF_l(X_n).\] 
\end{corollary}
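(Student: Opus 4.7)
The plan is to leverage the semisimple-type decomposition of the group rings $\Zl[\Gamma_m]$ for $l\ne p$, together with the control theorem of Lemma~\ref{lem:control}. The essential point is that when $\omega\in\Gamma_n^\vee$, the idempotent $e_{[\omega]}$ is already supported at level $n$, so applying it kills all information coming from higher layers of the tower.

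First, I would fix $m\ge n$ and unpack the arithmetic of idempotents. By Lemma~\ref{isomorphism-group rings}, the surjection $\Zl[\Gamma_m]\twoheadrightarrow\Zl[\Gamma_n]$ corresponds, under the direct sum decompositions, to the identity on the summands indexed by $[\psi]\in\sR_n^{(l)}$ and to zero on the summands indexed by $[\psi]\in\sR_m^{(l)}\setminus\sR_n^{(l)}$. In particular, the kernel $I_n^m$ is contained in the sum of the latter summands, and hence
\[
e_{[\omega],m}\cdot I_n^m \;=\; 0
\]
for every $\omega\in\Gamma_n^\vee$. Moreover, the idempotent $e_{[\omega]}\in\Lambda_l(\Gamma)$ projects to $e_{[\omega],m}\in\Zl[\Gamma_m]$ for every $m\ge n$ under the structure map $\Lambda_l(\Gamma)\to\Zl[\Gamma_m]$, so the $\Lambda_l(\Gamma)$-action via $e_{[\omega]}$ on each $\Pic_l(X_m)$ or $\BF_l(X_m)$ agrees with multiplication by $e_{[\omega],m}$.

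Next, I would apply $e_{[\omega],m}$ to the control isomorphism $\Pic_l(X_m)/I_n^m\Pic_l(X_m)\cong\Pic_l(X_n)$ from Lemma~\ref{lem:control}. Using $e_{[\omega],m}I_n^m=0$, this yields natural isomorphisms
\[
e_{[\omega],m}\Pic_l(X_m)\;\xrightarrow{\sim}\;e_{[\omega],n}\Pic_l(X_n)
\]
for every $m\ge n$, and these isomorphisms are compatible with the transition maps of the inverse system. Passing to the inverse limit, and using the decomposition \eqref{eq:decomp-mod} together with the compatibility of $e_{[\omega]}$ with $e_{[\omega],m}$ established in the previous step, I would conclude
\[
e_{[\omega]}\Pic_l(X_\infty)\;=\;\varprojlim_{m\ge n}e_{[\omega],m}\Pic_l(X_m)\;=\;e_{[\omega],n}\Pic_l(X_n).
\]
The identical argument, run with the second half of Lemma~\ref{lem:control} in place of the first, yields the corresponding statement for $\BF_l$.

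There is no real obstacle here once the decomposition theorem of Bandini--Longhi (Lemma~\ref{isomorphism-group rings}) is in hand: the entire argument hinges on the purely ring-theoretic observation that $e_{[\omega],m}$ annihilates $I_n^m$, which in turn rests on the fact that $|\Gamma_m|$ is a unit in $\Zl$. The only point to be careful about is the compatibility of the idempotent $e_{[\omega]}$ at the infinite level with the finite-level idempotents $e_{[\omega],m}$, but this is built into the very definition $e_{[\omega]}=(e_{[\omega],m})_m\in\varprojlim_m\Zl[\Gamma_m]$.
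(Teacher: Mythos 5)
Your proof is correct and follows essentially the same route as the paper: both arguments hinge on the observation that, since $\omega\in\Gamma_n^\vee$, the idempotent annihilates the kernel ideal $I_n$, so applying it to the control isomorphism of Lemma~\ref{lem:control} collapses everything to level $n$. The only cosmetic difference is that you work level-by-level with $e_{[\omega],m}$ and $I_n^m$ and then pass to the inverse limit, whereas the paper applies $e_{[\omega]}$ directly to $\pic_l(X_\infty)/I_n^\infty\pic_l(X_\infty)$ at the infinite level.
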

\begin{proof}
As before, we only provide the proof for Picard groups, since the proof for Bowen--Franks groups can be carried out analogously.    By the choice of $n$, we have $e_{[\omega]}I_n^\infty\pic_l(X_\infty)=0$. Thus, we deduce
    \[e_{[\omega]}\pic_l(X_\infty)\cong e_{[\omega]}\left(\pic_l(X_\infty)/I^\infty_n\pic_l(X_\infty)\right)\cong e_{[\omega]}\pic_l(X_n)\cong e_{[\omega],n}\pic_l(X_n),\]
    where the last identity follows from the fact that $\pic_l(X_n)$ is a $\Z_l[\Gamma_n]$-module. 
\end{proof}

Through Corollary~\ref{projection-fin-level}, we can compute $\Char_{\Lambda_l(\Gamma)_{[\omega]}}\left(\pic_l(X_\infty)_{[\omega]}\right)$.

\begin{lemma}\label{lem:finite-char-ideal-non-equal-char}
Let $l$ and $p$ be distinct primes. If $\omega$ is a finite-order of $\Gamma$, then
    \[\Char_{\Lambda_l(\Gamma)_{[\omega]}}\left(\pic_l(X_\infty)_{[\omega]}\right)=(\omega(\cL_p(X,\alpha))\]
    and
     \[\Char_{\Lambda_l(\Gamma)_{[\omega]}}\left(\BF_l(X_\infty)_{[\omega]}\right)=(\omega(L_p(X,\alpha)).\]
\end{lemma}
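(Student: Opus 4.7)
The plan is to descend to the finite level via Corollary~\ref{projection-fin-level}, apply Theorem~\ref{char-udir} together with Lemma~\ref{isomorphism-group rings} to realize each $[\omega]$-isotypic component as a cokernel of a square matrix over $\Zl[\omega(\Gamma_n)]$, and then identify the characteristic ideal with the determinant ideal — which is valid precisely because that coefficient ring is a DVR when $l\ne p$.

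First, I would fix $n$ with $\omega\in\Gamma_n^\vee$, so that Corollary~\ref{projection-fin-level} identifies the inverse-limit isotypic components with their finite-level counterparts:
\[
\pic_l(X_\infty)_{[\omega]}\cong e_{[\omega],n}\pic_l(X_n), \qquad \BF_l(X_\infty)_{[\omega]}\cong e_{[\omega],n}\BF_l(X_n).
\]
Next, Theorem~\ref{char-udir}, upon reducing its relations modulo $(T_1^{p^n}-1,\dots,T_d^{p^n}-1)$, presents $\pic_l(X_n)$ as the cokernel of $D_X-A_{\alpha_n}$ and $\BF_l(X_n)$ as the cokernel of $\Id-A_{\alpha_n}$ acting on $\Zl[\Gamma_n]^r$. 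Decomposing $\Zl[\Gamma_n]$ via Lemma~\ref{isomorphism-group rings} and projecting to the $[\omega]$-component produces
\begin{align*}
\pic_l(X_\infty)_{[\omega]} &\cong \Zl[\omega(\Gamma_n)]^r/(D_X-A_\omega)\,\Zl[\omega(\Gamma_n)]^r,\\
\BF_l(X_\infty)_{[\omega]} &\cong \Zl[\omega(\Gamma_n)]^r/(\Id-A_\omega)\,\Zl[\omega(\Gamma_n)]^r,
\end{align*}
where $A_\omega:=\omega(A_\alpha)$. The compatibility of the idempotent with the matrix presentations is automatic here, because the entries of both relation matrices already lie in the commutative subring $\Phi_{p,d}\subset\Lambda_l(\Gamma)$ furnished by Lemma~\ref{lem:injectiverings}, so evaluation at $\omega$ commutes with forming cokernels.

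Finally, since $l\ne p$, the ring $\Zl[\omega(\Gamma_n)]\cong\Zl[\zeta_{p^{\ord(\omega)}}]$ is the ring of integers of an unramified extension of $\Ql$, hence a DVR. Over such a ring, Smith normal form shows that the characteristic ideal of the cokernel of a square matrix $M$ is generated by $\det M$ (and is $(0)$ when $\det M=0$, consistent with the convention recorded in Section~\ref{sec:IwAlg}). Invoking $\det(\omega(M))=\omega(\det M)$ together with Definition~\ref{def:MXalpha} then identifies the characteristic ideals of $\pic_l(X_\infty)_{[\omega]}$ and $\BF_l(X_\infty)_{[\omega]}$ with the ideals generated by $\omega(\cL_p(X,\alpha))$ and $\omega(L_p(X,\alpha))$ respectively, which is the assertion of the lemma. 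The main obstacle is the bookkeeping around the compatibility of the idempotent decomposition with the Iwasawa-algebra-level presentation; once this is cleanly stated, the remaining content is an elementary determinant computation.
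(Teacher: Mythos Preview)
Your argument is correct and follows the same route as the paper: reduce to finite level via Corollary~\ref{projection-fin-level}, present the $[\omega]$-component as the cokernel of a square matrix over the DVR $\Zl[\omega(\Gamma_n)]$, and identify the characteristic ideal with the determinant ideal. One caveat worth noting: the lemma as stated has $L_p$ and $\cL_p$ interchanged relative to Definition~\ref{def:MXalpha} and Theorem~\ref{thm:IMC}; your own cokernel presentations correctly pair $\pic$ with $D_X-A_\alpha$ (whose determinant is $L_p$) and $\BF$ with $\Id-A_\alpha$ (whose determinant is $\cL_p$), so the swap in your final sentence is inherited from the statement rather than produced by your argument---the paper's own proof in fact computes $\omega(\det(D_X-A_\alpha))$ for the Picard case, consistent with Theorem~\ref{thm:IMC}.
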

\begin{proof}
Suppose that $\omega$ is of order $p^n$.    By Corollary \ref{projection-fin-level}, it suffices to compute $\Char_{\Lambda_l(\Gamma)_{[\omega]}}\left(e_{[\omega],n}\pic_l(X_n)\right)$. Let $r$ be the order of $X$. Let $\Lambda_n:=\Z_l[\Gamma_n]$. Consider the short exact sequence of $\Lambda_n$-modules:
    \[0\to \Lambda_n^r\to \Lambda_n^r\to \pic_l(X_n)\to 0,\]
    where the map $\Lambda_n^r\to \Lambda_n^r$ is given by the matrix $(D_X-A_\alpha)\pmod {I_n}$. As $e_{[\omega],n}$ is an idempotent, the multiplication with $e_{[\omega],n}$ is an exact functor.  Applying $e_{[\omega],_n}$ to the sequence above, we obtain the following exact sequence 
    \[0\to (e_{[\omega],n}\Lambda_n)^r\to (e_{[\omega],n}\Lambda_n)^r\to e_{[\omega],n}\pic_l(X_n)\to 0,\]
    where the map $(e_{[\omega_n],n}\Lambda_n)^r\to (e_{[\omega_n],n}\Lambda_n)^r$ is given by $e_{[\omega_n],n}(D-A_\alpha)$. Recall that $e_{[\omega],n}\Lambda_n=\Z_l[\omega(\Gamma_n)]$. Thus, we have a short exact sequence
    \[0\to\Z_l[\omega(\Gamma_n)]^r\to \Z_l[\omega(\Gamma_n)]^r\to e_{[\omega],n}\pic_l(X_n)\to 0\]
    In particular, $e_{[\omega],n}\pic_l(X_n)$ is a quadratically presented $\Z_l[\omega(\Gamma_n)]$-module. Its characteristic ideal is given by
    \begin{align*}\Char_{e_{[\omega],n}\Lambda_n}(e_{[\omega],n}\pic_l(X_n))&=(\det(e_{[\omega],n}(D-A_\alpha)))=(e_{[\omega],n}\det(D-A_\alpha))\\&=(\omega(\det(D-A_\alpha)))\subset \Z_l[\omega(\Gamma_n)],\end{align*}
    where the last equality follows from the fact that $l$ and $p$ are coprime. The proof for Bowen--Franks groups can be carried out analogously.
\end{proof}
The last remaining ingredient to complete the proof of Theorem \ref{thmA} is the following result on the non-vanishing of the $p$-adic zeta function. 
\begin{theorem}\label{thm:non-vanishing}
   If $\omega$ is a non-trivial character of $\Gamma_n$, then $$\omega(L_p(X,\alpha))\ne0.$$ In particular, $L_p(X,\alpha)\ne0$. If $X$ is furthermore out-regular of degree $q$ and $\omega$ is of order $p^n$, then $$L(X_n/X,\omega,q^{-1})\neq 0.$$
\end{theorem}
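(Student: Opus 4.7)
The plan is to leverage the $\Gamma_n$-representation theoretic decomposition of $\Div(X_n)\otimes\overline{\QQ}$, combined with the control on kernels provided by Remark~\ref{rank-picard} (the number of reaches equals the multiplicity of the eigenvalue $0$).

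First I would observe that $\omega(L_p(X,\alpha))=\det(D_X-A_\omega)$, where $A_\omega=\omega(A_\alpha)$, since evaluation at $\omega$ commutes with taking the determinant. Next I would identify how the full Laplacian-type operator $D_{X_n}-A_{X_n}$ on $\Div(X_n)$ behaves under the $\Gamma_n$-action. As in the proof of Theorem~\ref{char-udir}, there is an isomorphism of $\overline{\QQ}[\Gamma_n]$-modules
\[
\Div(X_n)\otimes\overline{\QQ}\;\cong\;\bigoplus_{v\in V(X)}\overline{\QQ}[\Gamma_n](v,1),
\]
and using the canonical decomposition $\overline{\QQ}[\Gamma_n]\cong\bigoplus_{\omega\in\Gamma_n^\vee}\overline{\QQ}_\omega$ into character-eigenspaces, one obtains
\[
\Div(X_n)\otimes\overline{\QQ}\;\cong\;\bigoplus_{\omega\in\Gamma_n^\vee}\overline{\QQ}^r.
\]
The key point is that, since the covering $X_n\to X$ is locally an isomorphism, the vertex $(v,g)$ of $X_n$ has the same out-degree as $v$; so $D_{X_n}$ corresponds to the constant family $(D_X)_\omega$, and $A_{X_n}$ corresponds to $(A_\omega)_\omega$. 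Hence $D_{X_n}-A_{X_n}$ acts on the $\omega$-component as $D_X-A_\omega$.

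Taking kernels of this block-diagonalization gives
\[
\dim_{\overline{\QQ}}\ker(D_{X_n}-A_{X_n})\;=\;\sum_{\omega\in\Gamma_n^\vee}\dim_{\overline{\QQ}}\ker(D_X-A_\omega).
\]
Because $X_n$ is strongly connected, Remark~\ref{rank-picard} forces the left-hand side to equal $1$; the $\omega=1$ block contributes exactly $1$ to this sum (again by Remark~\ref{rank-picard} applied to $X$, which is strongly connected). Consequently, for every non-trivial $\omega\in\Gamma_n^\vee$, $\dim\ker(D_X-A_\omega)=0$, i.e.\ $\omega(L_p(X,\alpha))=\det(D_X-A_\omega)\neq 0$. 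The ``in particular'' statement $L_p(X,\alpha)\neq 0$ follows since otherwise every $\omega(L_p(X,\alpha))$ would vanish. For the out-regular case, Proposition~\ref{interpolation} reads $\omega(L_p(X,\alpha))=q^r L(X_n/X,\omega,q^{-1})^{-1}$, which via Lemma~\ref{description-asmatrix} must be interpreted as $q^r\det(\Id-q^{-1}A_\omega)$; non-vanishing of this quantity implies $q^{-1}$ is not a root of the denominator $\det(\Id-uA_\omega)$ of $L(X_n/X,\omega,u)$, so $L(X_n/X,\omega,q^{-1})$ is a well-defined non-zero number.

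The main obstacle is setting up the block-diagonalization cleanly, in particular justifying that the out-degree matrix descends to the constant family $(D_X)_\omega$; once that is done, the dimension bookkeeping of kernels is immediate from Remark~\ref{rank-picard} and strong connectivity. Everything else reduces to a short algebraic manipulation combined with a routine appeal to Proposition~\ref{interpolation}.
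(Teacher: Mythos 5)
Your proof is correct and follows essentially the same route as the paper: both decompose the divisor module of $X_n$ into character blocks on which the Laplacian acts as $D_X-A_\omega$, and both invoke Remark~\ref{rank-picard} (strong connectivity gives corank exactly $1$) to force every non-trivial block to have trivial kernel, with the out-regular case reduced to Proposition~\ref{interpolation}. The only cosmetic difference is that you identify the trivial character as the unique contributing block by applying Remark~\ref{rank-picard} directly to $X$, whereas the paper works with $\Zl$-ranks of $\Pic_l(X_n)$ and pins down the trivial character via a compatibility-in-$n$ argument.
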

\begin{proof}
Since the non-vanishing of $\omega(L_p(X,\alpha))$ is independent of the embedding $\iota_l$, we may consider its image in $\overline{\Ql}$, where  $l\ne p$.    Recall from Lemma \ref{isomorphism-group rings} the isomorphism
    \[\Zl[\Gamma_n]\cong \bigoplus_{[\omega]\in \sR_n^{(l)}}\Z_l[\omega(\Gamma_n)].\]
    In addition, we have seen in the proof of Theorem~\ref{char-udir} that $$\Pic_l(X_n)\cong\Zl[\Gamma_n]^r/(D_X-A_\alpha).$$
    Hence, we deduce 
    \[
    \Pic_l(X_n)\cong\bigoplus_{[\omega]\in\sR_n^{(l)}}\Zl[\omega(\Gamma_n)]^r/(D_X-\omega(A_\alpha)).
    \]

    By Remark \ref{rank-picard}, the rank of the $\ZZ$-module $\Pic(X_n)$ is $1$. Thus, there exists exactly one $[\omega]$ such that $(\Z_l[\omega(\Gamma)])^r/(D-\omega(A_\alpha))$ has positive $\Zl$-rank. As this holds for all $n$, this character has to be the trivial character. In particular, for all $\omega$ that is non-trivial, $\det(D_X-\omega(A_\alpha))\ne0$, proving the first assertion of the theorem. The final assertion follows from Proposition~\ref {interpolation}.
   \end{proof}
We are now ready to prove Theorem~\ref{thmA}.
\begin{theorem}Let $(X_n)_{n\ge0}$ be a $\Zp^d$-tower of strongly connected digraphs. For all primes $\ell$, we have
\label{thm:IMC}
   \[\Char_{\Lambda_l(\Gamma)}(\pic_l(X_\infty))=(L_p(X,\alpha))\]
   and
   \[\Char_{\Lambda_l(\Gamma)}(\BF_l(X_\infty))=(\cL_p(X,\alpha)).\]
   Furthermore, ${\rm Pic}_{l}(X_{\infty})$ is a torsion $\Lambda_l(\Gamma)$-module. 
\end{theorem}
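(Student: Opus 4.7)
The main input is Theorem~\ref{char-udir}, which presents $\pic_l(X_\infty)$ and $\BF_l(X_\infty)$ as cokernels of the square matrices $D_X - A_\alpha$ and $\Id - A_\alpha$ acting on $\Lambda_l(\Gamma)^r$. By Definition~\ref{def:MXalpha}, the determinants of these presentations are $L_p(X,\alpha)$ and $\cL_p(X,\alpha)$, respectively. My plan is to compute the characteristic ideals of these quadratically presented modules, handling the cases $l=p$ and $l\ne p$ separately, according to the two definitions of characteristic ideal recalled in \S\ref{sec:IwAlg}.

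When $l=p$, the Iwasawa algebra $\Lambda_p(\Gamma)\cong\Zp[[T_1-1,\ldots,T_d-1]]$ is a regular local Noetherian domain, and Theorem~\ref{thm:non-vanishing} guarantees $L_p(X,\alpha)\ne 0$ in it. Hence the map $D_X - A_\alpha\colon\Lambda_p(\Gamma)^r\to\Lambda_p(\Gamma)^r$ is injective, yielding a short exact sequence of finitely generated modules which exhibits $\pic_p(X_\infty)$ as $\Lambda_p(\Gamma)$-torsion. The standard identification of the characteristic ideal of a quadratically presented torsion module over a regular local domain with the ideal generated by the determinant of its presentation then gives $\Char_{\Lambda_p(\Gamma)}(\pic_p(X_\infty))=(L_p(X,\alpha))$. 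The identical argument with $\Id - A_\alpha$ in place of $D_X-A_\alpha$ settles the statement for $\BF_p(X_\infty)$.

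When $l\ne p$, I exploit the decomposition $\Lambda_l(\Gamma)=\prod_{[\omega]\in\sR^{(l)}}\Lambda_l(\Gamma)_{[\omega]}$ together with the coordinatewise definition of the characteristic ideal. Everything then reduces, one orbit at a time, to Lemma~\ref{lem:finite-char-ideal-non-equal-char}, whose proof identifies the $[\omega]$-component of $\Char(\pic_l(X_\infty))$ with the ideal generated by $\omega(L_p(X,\alpha))$, and similarly for BF with $\omega(\cL_p(X,\alpha))$. Reassembling these local identities across all orbits produces the global equalities asserted in the theorem.

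The most delicate point is the torsion assertion for Pic when $l\ne p$, since $\Lambda_l(\Gamma)$ is no longer a domain. The adjugate identity $\mathrm{adj}(D_X-A_\alpha)\cdot(D_X-A_\alpha)=L_p(X,\alpha)\cdot\Id$ shows that $L_p(X,\alpha)$ annihilates $\pic_l(X_\infty)$, and Theorem~\ref{thm:non-vanishing} gives $L_p(X,\alpha)\ne 0$ in $\Lambda_l(\Gamma)$. Care is required at the trivial orbit, where Remark~\ref{rank-picard} forces $\pic_l(X_\infty)_{[1]}=\pic_l(X)$ to have positive $\Zl$-rank; this is consistent with the trivial-orbit component of $L_p(X,\alpha)$ being $\det(D_X-A_X)=0$ and with the corresponding local characteristic ideal being $(0)$, so the component-wise conventions from \S\ref{sec:IwAlg} keep the global identity $\Char=(L_p(X,\alpha))$ meaningful.
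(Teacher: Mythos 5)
Your proof follows essentially the same route as the paper's: the published proof is precisely the combination of Theorem~\ref{char-udir} with Lemma~\ref{lem:finite-char-ideal-non-equal-char} for the characteristic-ideal identities (note that the statement of that lemma accidentally swaps $L_p$ and $\cL_p$; you correctly use what its proof actually establishes) and Theorem~\ref{thm:non-vanishing} for torsionness, with the $l=p$ case resting on the standard fact that a quadratically presented torsion module over the regular domain $\Lambda_p(\Gamma)$ has characteristic ideal generated by the determinant of its presentation, which you spell out and the paper leaves implicit.

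Two caveats on the points you flag. First, for $l\ne p$ the adjugate identity only shows that $L_p(X,\alpha)$ annihilates $\Pic_l(X_\infty)$, and $L_p(X,\alpha)$ is a zero-divisor in $\Lambda_l(\Gamma)=\prod_{[\omega]}\Lambda_l(\Gamma)_{[\omega]}$ (its trivial-orbit component is $\det(D_X-A_X)=0$ by Remark~\ref{rank-picard}), so this step does not deliver torsionness in the classical sense; indeed, as you yourself observe, $\Pic_l(X_\infty)_{[1]}$ has positive $\Zl$-rank, so the torsion assertion is substantive only for $l=p$, where your injectivity argument (equivalently, the paper's appeal to Theorem~\ref{thm:non-vanishing}, which gives $L_p(X,\alpha)\ne 0$ in the domain $\Lambda_p(\Gamma)$) already settles it. Second, for $l=p$ the ``identical argument'' for $\BF_p(X_\infty)$ presupposes $\cL_p(X,\alpha)\ne 0$ for injectivity; when it vanishes (Example~\ref{eg:unbounded BF}) the module is non-torsion and the asserted equality holds as $(0)=(0)$ by the convention recalled in \S\ref{sec:IwAlg}, a case worth stating explicitly since you only invoke that convention in the $l\ne p$ discussion.
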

\begin{proof}
The equality of characteristic ideals follows from Theorem~\ref{char-udir} and Lemma~\ref{lem:finite-char-ideal-non-equal-char}; the torsionness of $\Pic_l(X_\infty)$ follows from Theorem~\ref{thm:non-vanishing}.
\end{proof}
\begin{remark}\label{rk:sinnott}
    For a $\Zp^d$-tower of strongly connected digraphs $(X_n)_{n\ge0}$, the $\Lambda_l(\Gamma)$-module $\Pic_l(X_\infty)$ is a Sinnott module in the sense of \cite[Definition 4.6]{BandiniLonghi}. If $\cL_p(X,\alpha)\neq 0$, the same is true for $\BF_l(X_\infty)$. 
\end{remark}

\begin{remark}
   In the case of undirected graphs, the statement in Theorem~\ref{thm:IMC} for Picard groups when $l=p$ has been proved in \cite[Theorem 6.1]{kataoka} and \cite[Theorem 5.2 and Remark 5.3]{KM1}.  
\end{remark}
\begin{example}\label{eg:unbounded BF}
    Set $d=1$. Let $X$ be a digraph with adjacency matrix
    \[A_X=\begin{pmatrix}
        2&2&0\\
        1&1&1\\
        1&2&1
    \end{pmatrix}.\]
    We can check that $X$ is strongly connected. Consider a voltage assignment $\alpha$ such that 
    \[A_\alpha=\begin{pmatrix}
        2&1+T&0\\
        1&1&1\\
        1&1+T&1
    \end{pmatrix}.\]
    We can verify using Lemma~\ref{lem:connected} that $X_n$ is strongly connected for all $n$. We have
    \[\mathcal{L}_p(X,\alpha)=\det(I-A_\alpha)=\det\begin{pmatrix}
        -1&-1-T&0\\
        -1&0&-1\\
        -1&-1-T&0
    \end{pmatrix}=0.\]
In particular, it follows from Theorem~\ref{char-udir} that $\BF_l(X_\infty)$ is not a torsion $\Lambda_\ell(\Gamma)$-module for all $l$. Furthermore, the algebraic Bowen--Franks rank $b(X_n)$ is unbounded as $n\to\infty$.
\end{example}

\section{A Sinnott--Washington type theorem}
The goal of this section is to prove Theorem~\ref{thmB}.

\begin{defn}
For each $\vec\zeta=(\zeta_1,\dots,\zeta_d)\in\mu_{p^\infty}^d$ and $a=(a_1,\dots,a_d)\in\Zp^d$, we write $a(\vec\zeta)=\zeta_1^{a_{1}}\cdots \zeta_d^{a_{d}}\in\mu_{p^\infty}$.
\end{defn}

\begin{lemma}\label{lem:eq-vects}
    Let $a,b\in\Zp^d$ be two distinct vectors, and $r\ge1$ an  integer. We have
    \[
    \#\{\vec\zeta\in\mu_{p^n}^d:a(\vec\zeta)^r=b(\vec\zeta)^r\}=O(p^{(d-1)n}).
    \]
\end{lemma}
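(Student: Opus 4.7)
The plan is to translate the multiplicative equation $a(\vec\zeta)^r=b(\vec\zeta)^r$ into a single linear congruence modulo a power of $p$, and then count solutions by direct Gaussian elimination over $\Z/p^n\Z$.

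First I would fix a primitive $p^n$-th root of unity $\eta_n$ and write every $\zeta_j\in\mu_{p^n}$ uniquely as $\eta_n^{x_j}$ with $x_j\in\Z/p^n\Z$. Using the definition $a(\vec\zeta)=\zeta_1^{a_1}\cdots\zeta_d^{a_d}$, the condition $a(\vec\zeta)^r=b(\vec\zeta)^r$ becomes
\[
r\sum_{j=1}^d (a_j-b_j)\,x_j\equiv 0\pmod{p^n}.
\]
Set $c_j=a_j-b_j$; since $a\neq b$, at least one $c_j$ is nonzero. Let $m=\ord_p(r)+\min\{\ord_p(c_j):c_j\neq 0\}$, a nonnegative integer depending only on $a$, $b$, $r$ (in particular, independent of $n$).

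Next I would handle the small-$n$ regime trivially: for $n\le m$, the total count is bounded by $p^{dn}\le p^m\cdot p^{(d-1)n}$. For $n>m$, I divide through by $p^m$ to rewrite the congruence as $\sum_{j=1}^d e_j x_j\equiv 0\pmod{p^{n-m}}$, where $e_j=rc_j/p^m\in\Zp$ and by construction at least one $e_{j_0}$ is a $p$-adic unit. Since $e_{j_0}$ is invertible modulo every power of $p$, for each choice of the remaining $d-1$ coordinates $\{x_j:j\neq j_0\}\in(\Z/p^n\Z)^{d-1}$ the value of $x_{j_0}$ is determined modulo $p^{n-m}$, hence admits exactly $p^m$ lifts in $\Z/p^n\Z$. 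Multiplying gives the total count $p^{(d-1)n}\cdot p^m=O(p^{(d-1)n})$, which is precisely the claim.

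The argument presents no real obstacle; the only point requiring care is confirming that the integer $m$ is indeed finite and bounded independently of $n$, which uses crucially that $a\neq b$ and $r\ge 1$ so that $\min\{\ord_p(c_j):c_j\neq 0\}$ and $\ord_p(r)$ are both finite. The implicit constant in the $O$-estimate is then exactly $p^m$, i.e.\ $p^{\ord_p(r)}$ times $p$ raised to the minimum $p$-adic valuation among the nonzero components of $a-b$.
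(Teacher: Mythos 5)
Your proof is correct and is essentially the paper's argument in additive disguise: both isolate one coordinate whose coefficient $r(a_j-b_j)$ has bounded $p$-adic valuation and observe that, for each of the $p^{(d-1)n}$ choices of the remaining coordinates, that coordinate is confined to a coset of bounded index in $\mu_{p^n}$ (equivalently, a residue class mod $p^{n-m}$), giving $O(1)$ solutions per choice. Your version is if anything slightly cleaner, since choosing the index of minimal valuation yields an exact count $p^{(d-1)n+m}$ with an explicit constant.
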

\begin{proof}
Suppose $n$ is large enough so that $p^ra\not\equiv p^rb\mod p^n$.
    Without loss of generality, suppose that $p^ra_1\not\equiv p^rb_1\mod p^n$. Let $s=\ord_p(a_1-b_1)$. Then $r+s<n$. If $a(\vec\zeta)^{p^r}=b(\vec\zeta)^{p^r}$, then
    \[
    \zeta_1^{p^r(a_1-b_1)}=\prod_{i=2}^d\zeta_i^{p^r(b_i-a_i)}.
    \]
    For any given choice of $\zeta_2,\dots,\zeta_d\in\mu_{p^n}$, there are at most $p^{r+s}$ values of $\zeta_1\in\mu_{p^n}$ for which the equality above holds. Hence, there are at most $p^{(d-1)n+r+s}$ choices of $\vec\zeta\in\mu_{p^n}^d$ such that $a(\vec\zeta)^r=b(\vec\zeta)^r$.
\end{proof}

\begin{corollary}\label{cor:non-vanish}
    Let $F\in\Phi_{p,d}$ be a non-zero element. Let $l\ne p$ be a prime number such that $F\notin l\Phi_{p,d}$. Then
    \[
    \sharp \{\omega\in\Gamma_n^\vee:\ord_l(\omega(F))>0\}=O(p^{(d-1)n}).
    \]
\end{corollary}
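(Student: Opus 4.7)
The plan is to reduce modulo $l$ and then bound the count of zeros by induction on the number of monomials of the reduction, using Lemma~\ref{lem:eq-vects} as the basic counting device. Since $l\ne p$, reduction modulo the maximal ideal of $\overline{\Z_l}$ identifies $\mu_{p^\infty}\subset\overline{\Q_l}^\times$ with the group of $p$-power roots of unity in $\overline{\F_l}^\times$. Let $\bar F$ denote the reduction of $F$ in $\F_l[\Z_p^d]=\Phi_{p,d}/l\Phi_{p,d}$, which is nonzero by the hypothesis $F\notin l\Phi_{p,d}$. For $\omega\in\Gamma_n^\vee$, writing $\vec\zeta=(\omega(\gamma_1),\ldots,\omega(\gamma_d))\in\mu_{p^n}^d$, the condition $\ord_l(\omega(F))>0$ becomes the vanishing $\bar F(\vec\zeta)=0$ in $\overline{\F_l}$. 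So the task reduces to showing that $\#\{\vec\zeta\in\mu_{p^n}^d:\bar F(\vec\zeta)=0\}=O(p^{(d-1)n})$.

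Writing $\bar F=\sum_{j=1}^N \bar c_j T^{a_j}$ with pairwise distinct $a_j\in\Z_p^d$ and $\bar c_j\in\F_l^\times$, I argue by induction on $N$. The case $N=1$ is immediate since $\bar c_1 a_1(\vec\zeta)$ is always a unit. For the inductive step with $N\ge 2$, apply Lemma~\ref{lem:eq-vects} with $r=1$ to each of the $\binom{N}{2}$ pairs $(a_i,a_j)$ of distinct exponents: the set $\{\vec\zeta:a_i(\vec\zeta)=a_j(\vec\zeta)\}$ has cardinality $O(p^{(d-1)n})$, so the union $S$ over all such pairs also has cardinality $O(p^{(d-1)n})$. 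It then remains to bound the bad $\vec\zeta$ lying in the ``generic'' complement $\mu_{p^n}^d\setminus S$, on which the values $a_1(\vec\zeta),\ldots,a_N(\vec\zeta)$ are pairwise distinct.

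On the generic part, the idea is to construct from $\bar F$ an auxiliary nonzero element $\bar G\in\F_l[\Z_p^d]$ with strictly fewer than $N$ monomials that vanishes at every generic bad $\vec\zeta$; applying the inductive hypothesis to $\bar G$ then supplies the required bound. A natural mechanism uses the Frobenius-invariance of the zero locus of $\bar F$: because $\bar c_j\in\F_l$, the relation $\bar F(\vec\zeta)=0$ automatically yields $\bar F(\vec\zeta^{l})=0$, and a judicious $\F_l$-linear combination of these two relations eliminates one of the monomials $T^{a_j}$. The main obstacle of the plan is to verify that the $\bar G$ obtained in this way is genuinely nonzero with strictly fewer than $N$ monomials, which requires a careful analysis of coincidences between the exponent sets $\{a_j\}$ and $\{la_j\}$ in $\Z_p^d$; in degenerate configurations one may need to iterate the Frobenius combination or to further stratify the generic set by additional applications of Lemma~\ref{lem:eq-vects} to pairs $(a_i,la_j)$.
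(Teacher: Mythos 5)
Your reduction modulo $l$ is right (since $\Q_l(\mu_{p^n})/\Q_l$ is unramified, $\ord_l(\omega(F))>0$ is equivalent to $\ev_{\vec\zeta}\bar F=0$ in $\overline{\F_l}$), and using Lemma~\ref{lem:eq-vects} to discard a set of size $O(p^{(d-1)n})$ is exactly the right first move. But the rest of the argument has a genuine gap, in two places. First, stratifying by the condition that the values $a_1(\vec\zeta),\dots,a_N(\vec\zeta)$ be \emph{pairwise distinct} is too coarse: $\bar F$ can vanish on a large subset of that generic stratum. Take $d=2$ and $\bar F=1+T_1+\cdots+T_1^{p-1}$ (assume $l>p$ so no two coefficients collapse mod $l$); then $\bar F(\vec\zeta)=0$ whenever $\zeta_1$ has exact order $p$, which is $(p-1)p^n$ points of $\mu_{p^n}^2$ at all of which the $p$ monomial values $1,\zeta_1,\dots,\zeta_1^{p-1}$ are pairwise distinct. (Here the bound still holds, but your generic stratum is not zero-free, so the inductive step must do real work on it.) Second, the Frobenius mechanism you propose does not produce the auxiliary $\bar G$ with fewer monomials. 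Writing $\bar F^{(l)}=\sum_j\bar c_jT^{la_j}$, an $\F_l$-combination $\alpha\bar F+\beta\bar F^{(l)}$ has its exponents in $\{a_j\}\cup\{la_j\}$, generically $2N$ monomials; even the normalized combination $T^{(l-1)a_N}\bar F-\bar F^{(l)}$, which cancels the $T^{la_N}$ term, leaves up to $2N-2\ge N$ monomials. In the example above, iterating Frobenius only produces more sums of $p$ roots of unity with essentially disjoint exponent sets, so no iteration or restratification closes the induction. This is the ``main obstacle'' you flag yourself, and it is fatal as stated.

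The paper avoids induction entirely via Sinnott's trace argument, and the correct stratification is by cosets of $\mu_{p^r}$, where $r$ is defined by $\F_l(\mu_p)\cap\mu_{p^\infty}=\mu_{p^r}$: one removes the $O(p^{(d-1)n})$ points where $a_i(\vec\zeta)^{p^r}=a_j(\vec\zeta)^{p^r}$ for some $i\ne j$ (Lemma~\ref{lem:eq-vects} applied with exponent $p^r$, not $1$). On the complement, suppose $\ev_{\vec\zeta}\bar F=\sum_i\bar b_i\,a_i(\vec\zeta)=0$ and let $k$ be the field generated over $\F_l$ by $\mu_p$ and the $a_i(\vec\zeta)$. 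Since $a_i(\vec\zeta)^{-1}a_j(\vec\zeta)\notin\F_l(\mu_p)$ for $i\ne j$, the trace $\Tr_{k/\F_l(\mu_p)}$ kills every cross term $a_i(\vec\zeta)^{-1}a_j(\vec\zeta)$ (a $p$-power root of unity generating a nontrivial subextension has trace zero), so $\Tr\bigl(a_i(\vec\zeta)^{-1}\ev_{\vec\zeta}\bar F\bigr)=[k:\F_l(\mu_p)]\,\bar b_i$; as $[k:\F_l(\mu_p)]$ is a $p$-power and $l\ne p$, this forces $\bar b_i=0$, contradicting $F\notin l\Phi_{p,d}$. You would need to replace your inductive step by this (or an equivalent linear-independence statement for $p$-power roots of unity in distinct $\mu_{p^r}$-cosets over $\F_l$) for the proof to go through.
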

\begin{proof}
Let $\bar F$ denote the image of $F$ in $\Phi_{p,d}/l\Phi_{p,d}$, and write 
\[
\bar F=\sum_{i=1}^kb_iT_1^{a_{i,1}}\cdots T_d^{a_{i,d}},
\]
where $b_i\in \FF_l^\times$ and $a_i=(a_{i,1},\dots,a_{i,d})\in\Zp^d$. 

 Suppose $\vec\zeta$ is chosen such that $a_i(\vec\zeta)^{-1}a_j(\vec\zeta)\notin\FF_l(\mu_p)$ whenever $i\ne j$. We follow the proof of \cite[Theorem~2.2]{sinnott} to show that $\ev_{\vec\zeta}\bar F\ne0$. Suppose the contrary. Then
 \[
 \ev_{\vec\zeta}\bar F=\sum_{i=1}^k\bar b_ia_i(\vec\zeta)=0.
 \]
 Let $k$ be the extension of $\FF_l$ that contains $\mu_p$ and $a_i(\vec\zeta)$ for all $i$. Let $\Tr$ denote the trace map from $k$ to $\FF_l(\mu_p)$. 
Then for all $i$,
\[
\Tr \left(a_i(\vec\zeta)^{-1}\ev_{\vec\zeta}\bar F\right)=[k:\FF_l(\mu_p)]\bar b_i=0.
\]
As $[k:\FF_l(\mu_p)]$ is a power of $p$, this implies that $\bar b_i=0$. This contradicts that $F\notin l\Phi_{p,d}$.

It remains to count the number of elements $\vec\zeta\in\mu_{p^n}^d$ such that $a_i(\vec\zeta)^{p^r}=a_j(\vec\zeta)^{p^r}$ for some $i\ne j$, where $r$ is the integer given by $\FF_l(\mu_p)\bigcap\mu_{p^\infty}=\mu_{p^r}$. By Lemma~\ref{lem:eq-vects}, the number of such elements is $O(p^{(d-1)n})$, hence the corollary follows.
\end{proof}

\begin{defn}
    Give a nonzero element $F\in\Phi_{p,d}$ and a prime number $l$, we define
    \begin{align*}
        \Supp_n(F)&=\{\omega\in \Gamma_n^\vee:\omega(F)\ne0\},\\
        \ord_{l,n}(F)&=\max(\ord_l(\omega(F)):\omega\in\Supp_n(F)).
    \end{align*}
\end{defn}

\begin{lemma}\label{lem:boundedvaluations}
    Given any non-zero $F\in\Phi_{p,d}$, $\ord_{l,n}(F)$ is bounded independently of $n$.
\end{lemma}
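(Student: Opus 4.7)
The plan is a classical Archimedean norm argument, combined with the uniform boundedness of the number of primes above $l$ in the cyclotomic tower $\Q(\mu_{p^\infty})/\Q$.

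First, I would write $F=\sum_{i=1}^k c_i T^{a_i}$ with $c_i\in\Z$, $a_i\in\Z_p^d$, and set $C=\sum_i|c_i|$. For $\omega\in\Gamma_n^\vee$ of order $p^m$, the image $\omega(\Gamma)$ is cyclic of order $p^m$, so $\omega(F)\in\mathcal{O}_K=\Z[\mu_{p^m}]$ where $K=\Q(\mu_{p^m})$. Crucially, $|\sigma(\omega(F))|\le C$ at every complex embedding $\sigma$ of $K$, since each $\omega(T^{a_i})$ is a root of unity.

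Next, assume $\omega(F)\ne 0$, so that $N_{K/\Q}(\omega(F))$ is a non-zero rational integer, hence $|N_{K/\Q}(\omega(F))|\ge 1$. Let $\mathfrak{l}$ be the prime of $\mathcal{O}_K$ above $l$ selected by the fixed embedding $\iota_l$, and let $f_m$ be its residue degree. Since $l\ne p$, $l$ is unramified in $K/\Q$, and with the normalization $\ord_l(l)=1$ one has $\ord_l(\omega(F))=\ord_\mathfrak{l}(\omega(F))$. Factoring the norm into prime contributions, using that $\omega(F)\in\mathcal{O}_K$ has non-negative valuations at all primes, and combining with the Archimedean bound, we obtain
\[l^{f_m\ord_\mathfrak{l}(\omega(F))}\le |N_{K/\Q}(\omega(F))|\le C^{[K:\Q]},\]
which rearranges to $\ord_l(\omega(F))\le g_m\log_l C$, where $g_m=[K:\Q]/f_m$ is the number of primes of $\mathcal{O}_K$ above $l$.

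The only remaining point is to bound $g_m$ uniformly in $m$. This is standard: $\Gal(\Q(\mu_{p^\infty})/\Q)\cong\Z_p^\times$ and the decomposition subgroup at $l$ is the topological closure $\overline{\langle l\rangle}$. Since $l\ge 2$ is a rational integer, $l^k\ne 1$ in $\Z$ for any $k\ge 1$, so $l$ has infinite order in $\Z_p^\times$; hence $\overline{\langle l\rangle}$ has finite index $g_\infty$ in $\Z_p^\times$, and $g_m\le g_\infty$ for all $m$. Putting everything together yields $\ord_l(\omega(F))\le g_\infty\log_l C$, a bound depending only on $F$ and $l$ and independent of $n$. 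The main step is the norm inequality; the uniform bound on $g_m$ is classical, and I do not anticipate any serious obstacle.
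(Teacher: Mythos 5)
Your proof is correct, but it takes a genuinely different route from the paper's. The paper works modulo $l$: it regards $F$ as an element of $\Phi_{p,d}$ with coefficients in $\ZZ[\mu_{p^r}]$ (where $\FF_l(\mu_p)\cap\mu_{p^\infty}=\mu_{p^r}$), shows via Sinnott's trace argument (the same device as in Corollary~\ref{cor:non-vanish}) that $\ord_l(\omega(F))=\min_i\ord_l(b_i)$ whenever the evaluated monomials $a_i(\vec\zeta)^{p^r}$ are pairwise distinct, and handles the degenerate characters by collapsing coinciding monomials into a single term and inducting on the number of summands $k$; the collapsed polynomials range over a finite set because the new coefficients only involve roots of unity in $\mu_{p^r}$. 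You instead use the Archimedean/global argument: the product formula in the form $l^{f_m\ord_{\mathfrak{l}}(\omega(F))}\le\absolute{N_{K/\QQ}(\omega(F))}\le C^{[K:\QQ]}$, plus the classical fact that the number of primes of $\QQ(\mu_{p^m})$ above $l$ stays bounded because $l$ topologically generates an open subgroup of $\Zp^\times$. This is essentially Washington's method rather than Sinnott's, and it is shorter and self-contained for the lemma as stated; what the paper's approach buys in exchange is the \emph{exact} value of $\ord_l(\omega(F))$ for all but $O(p^{(d-1)n})$ characters, which is what Corollary~\ref{cor:formula} ultimately needs. Both arguments use $l\ne p$ (unramifiedness of $l$ in $\QQ(\mu_{p^m})$ in your case, the field $\FF_l(\mu_p)$ in the paper's); this is the standing hypothesis of the section even though the lemma does not restate it.
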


\begin{proof}
We consider $F$ as an element with coefficients in $\ZZ[\mu_{p^r}]$, where $\Fl(\mu_p)\bigcap\mu_{p^\infty}=\mu_{p^r}$ and prove the statement for all such elements.

Suppose $F=\sum_{i=1}^kb_iT_1^{a_{i,1}}\cdots T_d^{a_{i,d}}$. We proceed by induction on $k$. The statement clearly holds when $k=1$. Let $k\ge2$ and assume that the statement holds for all elements in $\Phi_{p,d}$ that are  defined as a sum of $k-1$ terms. As in the proof of Corollary~\ref{cor:non-vanish}, when $a_{i}(\vec\zeta)^{p^r}\ne a_{j}(\vec\zeta)^{p^r}$ for all $i \ne j$, we have $\ord_l(\ev_{\vec\zeta}F)=\min(\ord_l(b_i):1\le i \le k)$. When $a_i(\vec\zeta)^{p^r}=a_j(\vec\zeta)^{p^r}$ for some $i\ne j$, we can replace $b_iT_1^{a_{i,1}}\cdots T_d^{a_{i,d}}+b_jT_1^{a_{j,1}}\cdots T_d^{a_{j,d}}$ by $(b_i+b_ja_j(\vec\zeta)a_i(\vec\zeta)^{-1})T_1^{a_{i,1}}\cdots T_d^{a_{i,d}}$ in $F$, resulting in an element in $\Lambda_{p,l}$ with $k-1$ summands. The number of such elements with $k-1$ summands is bounded independently of $n$,  since there are finitely pairs of possible $(i,j)$ and $a_j(\vec\zeta)a_i(\vec\zeta)^{-1}\in\mu_{p^r}$.
Each of these elements has bounded $\ord_{l,n}$ by the inductive hypothesis. Therefore, $\ord_{l,n}(F)$ is also bounded.
\end{proof}

We deduce the following:
\begin{corollary}\label{cor:formula}
    Let $F=\sum_{i=1}^kb_iT_1^{a_{i,1}}\cdots T_d^{a_{i,d}}\in \Phi_{p,d}$ be a non-zero element. Then
    \[
    \sum_{\omega\in\Supp_n(F)}\ord_l(\omega(F))=p^{nd}\mu_l(F)+O(p^{(d-1)n}),
    \]
    where $\mu_l(F)=\min(\ord_l(b_i):1\le i\le k)$.    
\end{corollary}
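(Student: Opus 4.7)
The plan is to reduce to the case $\mu_l(F)=0$ by factoring out a power of $l$, and then control the remaining sum using the two ingredients already established, namely Corollary~\ref{cor:non-vanish} and Lemma~\ref{lem:boundedvaluations}. Write $F=l^\mu F'$ with $\mu=\mu_l(F)$ and $F'\in\Phi_{p,d}\setminus l\Phi_{p,d}$. Since $\omega(F)=l^\mu\omega(F')$, we have $\omega(F)\ne0$ if and only if $\omega(F')\ne0$, so $\Supp_n(F)=\Supp_n(F')$ and $\ord_l(\omega(F))=\mu+\ord_l(\omega(F'))$ on this set. The identity
\[
\sum_{\omega\in\Supp_n(F)}\ord_l(\omega(F))=\mu\cdot|\Supp_n(F')|+\sum_{\omega\in\Supp_n(F')}\ord_l(\omega(F'))
\]
then splits the problem into a ``main term'' and an ``error term'' to estimate separately.

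For the main term, I would note that $|\Gamma_n^\vee|=p^{nd}$, while every character $\omega$ with $\omega(F')=0$ trivially satisfies $\ord_l(\omega(F'))>0$, so Corollary~\ref{cor:non-vanish} applied to $F'$ yields $|\Gamma_n^\vee\setminus\Supp_n(F')|=O(p^{(d-1)n})$. Hence
\[
\mu\cdot|\Supp_n(F')|=\mu\bigl(p^{nd}-O(p^{(d-1)n})\bigr)=p^{nd}\mu_l(F)+O(p^{(d-1)n}),
\]
which matches the desired main term exactly.

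For the error term, each nonzero summand $\ord_l(\omega(F'))$ arises from an $\omega\in\Supp_n(F')$ with $\ord_l(\omega(F'))>0$; the number of such $\omega$ is again $O(p^{(d-1)n})$ by Corollary~\ref{cor:non-vanish}, and each such valuation is bounded above by a constant $M$ independent of $n$ by Lemma~\ref{lem:boundedvaluations}. Multiplying gives $\sum_{\omega\in\Supp_n(F')}\ord_l(\omega(F'))\le M\cdot O(p^{(d-1)n})=O(p^{(d-1)n})$, which combined with the main-term estimate yields the stated asymptotic.

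Since the two substantive inputs (Corollary~\ref{cor:non-vanish} and Lemma~\ref{lem:boundedvaluations}) have already been proved, I do not expect a genuine obstacle here; the only point requiring care is the bookkeeping around zero values of $\omega(F')$, which is handled cleanly by observing that $\ord_l(0)=\infty$ is automatically counted in the ``$>0$'' bound of Corollary~\ref{cor:non-vanish}. This reduces the corollary to a short combination argument rather than a new estimate.
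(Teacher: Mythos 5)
Your proposal is correct and follows essentially the same route as the paper: both split the sum into a main term of size $p^{nd}\mu_l(F)$ controlled by Corollary~\ref{cor:non-vanish} and an error term over the $O(p^{(d-1)n})$ exceptional characters controlled by Lemma~\ref{lem:boundedvaluations}. Your explicit factorization $F=l^{\mu}F'$ is only a presentational variant (and in fact makes the application of Corollary~\ref{cor:non-vanish}, which is stated for $F\notin l\Phi_{p,d}$, slightly cleaner than the paper's implicit reduction).
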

\begin{proof}
Let $r_n=\#\{\omega\in\Gamma_n^\vee:\ord_l(\omega(F))=\mu_l(F)\}$ and $S_n=\{\omega\in\Supp_n(F):\ord_l(\omega(F))>\mu_l(F)\}$.   
Then,
\[\sum_{\omega\in\Supp_n(F)}\ord_l(\omega(F))=r_n\mu_l(F)+\sum_{\omega\in S_n}\ord_l(\omega(F)).
\]
Corollary~\ref{cor:non-vanish} tells us that $p^{dn}-r_n=O(p^{(d-1)n})$ and $\#S_n=O(p^{(d-1)n})$, and Lemma~\ref{lem:boundedvaluations} says that the summands in the second sum are $O(1)$. Hence, the corollary follows.
\end{proof}

We now give the proof of Theorem~\ref{thmB}.

\begin{theorem}\label{thm:generalizedSinnott}
    Let $(X_n)_{n\ge0}$ be a $\Zp^d$-tower of strongly connected digraphs.  Let $l\ne p$ be a prime number. Then 
    \[
    \ord_l(|\Pic_l(X_n)_\mathrm{tor}|)=p^{nd}\mu_l(L_p(X,\alpha))+O(p^{(d-1)n}).
    \] If $\mathcal{L}_p(X,\alpha)(\omega)\neq 0$ for all but finitely many $\omega\in\Gamma$, then
    \[\ord_l(\vert \BF_l(X_n)_{\textup{tor}}\vert)=p^{nd}\mu_l(\mathcal{L}_p(X,\alpha))+O(p^{(d-1)n}).\]
\end{theorem}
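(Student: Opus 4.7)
The plan is to compute $\ord_l(|\Pic_l(X_n)_\tors|)$ and $\ord_l(|\BF_l(X_n)_\tors|)$ orbit-by-orbit under the idempotent decomposition of $\Zl[\Gamma_n]$. Combining the finite-level presentation $\Pic_l(X_n)\cong\Zl[\Gamma_n]^r/(D_X-A_\alpha)$ (proved inside Theorem~\ref{char-udir}) with Lemma~\ref{isomorphism-group rings} yields
$$\Pic_l(X_n)\cong\bigoplus_{[\omega]\in\sR_n^{(l)}}\Zl[\omega(\Gamma_n)]^r/(D_X-\omega(A_\alpha)),$$
and similarly for $\BF_l(X_n)$ with $D_X$ replaced by $\mathrm{Id}$. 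Since $l\ne p$, each ring $\cO_{[\omega]}:=\Zl[\omega(\Gamma_n)]\cong\Zl[\zeta_{p^{\ord(\omega)}}]$ is the ring of integers in an unramified extension of $\Ql$ of degree $k_{[\omega]}:=[\Ql(\omega(\Gamma_n)):\Ql]=|[\omega]|$, in which $l$ remains a uniformizer.

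For any $r\times r$ matrix $M$ over such an $\cO_{[\omega]}$ with $\det M \ne 0$, the Smith normal form gives $\ord_l(|\coker M|)=k_{[\omega]}\cdot\ord_l(\det M)$. Applied to $M=D_X-\omega(A_\alpha)$ (respectively $M=\mathrm{Id}-\omega(A_\alpha)$), whose determinant equals $\omega(L_p(X,\alpha))$ (respectively $\omega(\cL_p(X,\alpha))$), and using that Galois-conjugate characters share the same $l$-adic valuation, each orbit with nonzero determinant contributes $\sum_{\omega'\in[\omega]}\ord_l(\omega'(\cdot))$ to the total $l$-adic valuation. Summing over all such orbits therefore produces precisely the sum over $\omega\in\Supp_n(\cdot)$ that appears in Corollary~\ref{cor:formula}.

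For the Picard statement, Theorem~\ref{thm:non-vanishing} forces $\omega(L_p)=0$ only at $\omega=\mathbf{1}$, whose summand $\Zl^r/(D_X-A_X)$ has $\Zl$-rank one (Remark~\ref{rank-picard}) and a fixed finite torsion part, contributing $O(1)$. For the Bowen--Franks statement, the hypothesis both ensures $\cL_p\ne 0$ (so Corollary~\ref{cor:formula} applies) and restricts the vanishing locus of $\omega\mapsto\omega(\cL_p)$ on $\Gamma^\vee$ to finitely many characters; for each such fixed $\omega$ of order $p^{n_0}$, the ring $\cO_{[\omega]}$ and the matrix $\mathrm{Id}-\omega(A_\alpha)$ are independent of $n$ for $n\ge n_0$, so the torsion of the associated summand stabilizes and the exceptional orbits contribute $O(1)$ in total. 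Both expressions therefore reduce to $\sum_{\omega\in\Supp_n(\cdot)}\ord_l(\omega(\cdot))+O(1)$, and Corollary~\ref{cor:formula} delivers the stated asymptotics. The main obstacle I foresee is precisely this exceptional-locus analysis: verifying that orbit-summands where the relevant determinant vanishes contribute only a bounded amount of torsion, which is what permits the clean reduction to Corollary~\ref{cor:formula}.
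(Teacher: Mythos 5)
Your proposal is correct and follows essentially the same route as the paper: decompose $\Pic_l(X_n)$ and $\BF_l(X_n)$ into $[\omega]$-components via Lemma~\ref{isomorphism-group rings}, identify each component's contribution with $\sum_{\omega'\in[\omega]}\ord_l(\omega'(\det))$ (the paper packages your Smith-normal-form step as Lemma~\ref{lem:finite-char-ideal-non-equal-char} together with Corollary~\ref{projection-fin-level}), handle the finitely many exceptional characters as an $O(1)$ term, and conclude with Corollary~\ref{cor:formula}. Your exceptional-locus analysis matches the paper's, which absorbs those orbits into $\Pic_l(X_0)_{\mathrm{tor}}$, respectively $\BF_l(X_{n_0})_{\mathrm{tor}}$ for the largest order $p^{n_0}$ at which $\omega(\cL_p(X,\alpha))$ vanishes.
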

\begin{proof}
Let $F=L_p(X,\alpha)$. Recall from Theorem~\ref{thm:non-vanishing} that $L_p(X,\alpha)(\omega)\ne0$ for all non-trivial $\omega$. Combining this with \eqref{eq:decomp-mod}, Theorem~\ref{char-udir} and Corollary~\ref{projection-fin-level}, we deduce
\begin{align*}
\Pic_l(X_n)_\mathrm{tor}&\cong\Pic_l(X_0)_\mathrm{tor}\oplus\bigoplus_{m=1}^n\bigoplus_{\substack{[\omega]\in\sR\\\omega(\Gamma)=\mu_{p^m}}}e_{[\omega]}\Pic_l(X_\infty),\\
\ord_l(|\Pic_l(X_n)_\mathrm{tor}|)&= \ord_l(|\Pic_l(X_0)_\mathrm{tor}|)+\sum_{\omega\in\Supp_n(F)}\ord_l(\omega(F)).
\end{align*}
Hence, the first formula follows from Corollary~\ref{cor:formula}. The second formula can be proved similarly by considering
\[
\BF_l(X_n)_\mathrm{tor}\cong\BF_l(X_{n_0})_\mathrm{tor}\oplus\bigoplus_{m=n_0+1}^n\bigoplus_{\substack{[\omega]\in\sR\\\omega(\Gamma)=\mu_{p^m}}}e_{[\omega]}\BF_l(X_\infty),
\]
where $n_0$ is the maximum integer such that there exists a character $\omega$ of order $p^{n_0}$ such that $\cL_p(X,\alpha)(\omega)=0$.
\end{proof}

\begin{remark}\label{rk:comparesinnott}
In light of Remark~\ref{rk:sinnott}, we can use the theory of Sinnott modules, in particular \cite[Theorem~4.7]{BandiniLonghi}, to describe $\ord_l(|\Pic_l(X_n)_\mathrm{tor}|)$ (or $\ord_l(\vert \BF_l(X_n)_{\textup{tor}}\vert)$) in terms of a sequence of integers $(t_m)_{m\ge0}$. Theorem~\ref{thm:generalizedSinnott} shows that this sequence converges to a constant, i.e. $\mu_l(L_p(X,\alpha))$ (or $\mu_l(\cL_p(X,\alpha))$), as $m\to\infty$.

When $d=1$, the hypothesis $\mathcal{L}_p(X,\alpha)(\omega)\neq 0$ for all but finitely many $\omega\in\Gamma$ is equivalent to $\cL_p(X,\alpha)\ne0$. However, the former condition is in general weaker than the latter when $d>1$.

\end{remark}

\section{Defects of digraphs}
Recall from Remark \ref{rank-picard} that the rank of the Picard group of strongly connected digraphs is equal to 1. In particular, it stays bounded along the $\Z_p^d$-towers we study in this article. We have seen in Example~\ref{eg:unbounded BF} that this is not the case for the algebraic Bowen--Franks rank $b(X_n)$.  In this section, we study the growth of $b(X_n)$ in more detail. We will also define and study its analytic counterpart.

\begin{defn}
    Let $X$ be a digraph. We define the \textbf{analytic Bowen--Franks rank} of $X$ as  $a(X)=-\ord_{u=1}(Z_X(u))$. The \textbf{defect} of $X$ is defined as $\delta(X)=a(X)-b(X)$. 
\end{defn}
The fact that the algebraic multiplicity of an eigenvalue of a square matrix is greater than equal to its geometric multiplicity means that $a(X)\ge b(X)$. In other words, $\delta(X)\ge0$.
One can regard $\delta(X)$ as the failure for a "BSD-type conjecture" to hold.    

If the adjacency matrix $A_X$ of $X$ is diagonalizable (e.g., when $X$ is undirected, in which case $A_X$ is symmetric), then $a(X)=b(X)$ and $\delta(X)=0$. For a general digraph, it is possible for $\delta(X)$ to be non-zero, as illustrated by the following example.

\begin{example}
\label{example:pos-defect}
    Let $X$ be a digraph of order 3, with the adjacency matrix
    \[A_X=\begin{pmatrix}
        1&0&1\\
        1&2&1\\
        1&1&2
    \end{pmatrix}.\]
    Lemma \ref{description-asmatrix} implies
    \[Z_X(u)=\det(\mathrm{Id}-uA_X)^{-1}.\]
    Note that $1$ is an eigenvalue of $A_X$ with algebraic multiplicity $2$. Thus, $a(X)=2$. The kernel of $\mathrm{Id}-A_X$ is  one-dimensional, which implies that $b(X)=1$. Therefore, $\delta(X)=2-1=1>0$.
\end{example}

\begin{remark}
    Let $Y/X$ be a covering of digraphs. There is a natural surjection $\BF(Y)\to \BF(X)$. In particular, $b(Y)\ge b(X)$. In addition, the Artin formalism (Theorem~\ref{Artin-formalism}) implies that $a(Y)\ge a(X)$. 
\end{remark}

\begin{defn}
    Let $(X_n)_{n\ge0}$ be a $\Zp^d$-tower of strongly connected digraphs. Let $\omega \in \Gamma_n^\vee$. We define $a(X_n,\omega)=-\ord_{u=1}L(X_n/X,\omega,u)$. We define $b(X_n,\omega)$ as the $\ZZ[\omega(\Gamma_n)]$-rank of the cokernel of $\textup{Id}-\omega(A_\alpha)$ as a linear map on $\ZZ[\omega(\Gamma_n)]^r\to \ZZ[\omega(\Gamma_n)]^r$.
\end{defn}
\begin{lemma}
\label{formula-a,b}
    We have the following identities
    \begin{align*}
        a(X_n)&=\sum_{\omega\in \Gamma_n^\vee}a(X_n,\omega),\\
        b(X_n)&=\sum_{\omega\in \Gamma_n^\vee}b(X_n,\omega).
    \end{align*}
\end{lemma}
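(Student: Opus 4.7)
The first identity is essentially a restatement of the Artin formalism (Theorem~\ref{Artin-formalism}). The plan is to take $-\ord_{u=1}$ of both sides of
\[
Z_{X_n}(u)=\prod_{\omega\in\Gamma_n^\vee}L(X_n/X,\omega,u).
\]
Since $\ord_{u=1}$ is additive over finite products of rational functions (and each $L(X_n/X,\omega,u)$ is a rational function by Lemma~\ref{description-asmatrix}), this immediately yields $a(X_n)=\sum_{\omega\in\Gamma_n^\vee}a(X_n,\omega)$.

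For the second identity, the plan is to compute $b(X_n)$ by tensoring $\BF(X_n)$ with $\QQ$ and decomposing via characters. Following the proof of Theorem~\ref{char-udir} (with $\Zl$ replaced by $\QQ$), we have
\[
\BF(X_n)\otimes_{\ZZ}\QQ\;\cong\;\QQ[\Gamma_n]^{r}/(\Id-A_\alpha),
\]
where $r$ is the order of $X_0$ and $A_\alpha$ is regarded as a matrix with entries in $\ZZ[\Gamma_n]\subseteq\QQ[\Gamma_n]$. Since $b(X_n)=\dim_\QQ\bigl(\BF(X_n)\otimes_\ZZ\QQ\bigr)$, it remains to split the right-hand side according to characters.

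The next step is to invoke the Wedderburn decomposition of the semisimple algebra $\QQ[\Gamma_n]$,
\[
\QQ[\Gamma_n]\;\cong\;\prod_{[\omega]}\QQ(\omega(\Gamma_n)),
\]
where $[\omega]$ runs over Galois orbits of characters of $\Gamma_n$. Under this isomorphism the matrix $A_\alpha$ becomes block-diagonal with block $\omega(A_\alpha)$ on the $[\omega]$-component, and hence
\[
\BF(X_n)\otimes_\ZZ\QQ\;\cong\;\prod_{[\omega]}\QQ(\omega(\Gamma_n))^{r}/\bigl(\Id-\omega(A_\alpha)\bigr).
\]
Taking $\QQ$-dimensions and using that the $\QQ(\omega(\Gamma_n))$-dimension of the $[\omega]$-component equals the $\ZZ[\omega(\Gamma_n)]$-rank of $\coker(\Id-\omega(A_\alpha))$, namely $b(X_n,\omega)$, we obtain
\[
b(X_n)=\sum_{[\omega]}[\QQ(\omega(\Gamma_n)):\QQ]\cdot b(X_n,\omega).
\]
Finally, since $b(X_n,\omega)$ depends only on the Galois orbit of $\omega$ and $|[\omega]|=[\QQ(\omega(\Gamma_n)):\QQ]$, the sum over Galois orbits can be rewritten as a sum over all characters, yielding $b(X_n)=\sum_{\omega\in\Gamma_n^\vee}b(X_n,\omega)$.

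The main point requiring care is the identification of $A_\alpha$ with its image under the Wedderburn decomposition; this is straightforward once one observes that on the $[\omega]$-component, an element $g\in\Gamma_n$ acts as $\omega(g)$, so the entries of $A_\alpha$ (which are $\ZZ$-linear combinations of group elements) go to their $\omega$-evaluations. No genuine obstacle is anticipated beyond bookkeeping with the orbit decomposition.
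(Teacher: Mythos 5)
Your proof is correct, and both halves follow the same strategy as the paper: the first identity is exactly the paper's argument (additivity of $-\ord_{u=1}$ over the Artin factorization), and the second rests on the same idea of block-diagonalizing $\Id-A_\alpha$ under a character decomposition of the group algebra and then converting a sum over Galois orbits into a sum over all characters using that $b(X_n,\omega)$ is constant on orbits with $|[\omega]|$ equal to the degree of $\QQ(\omega(\Gamma_n))$. The only real difference is the base ring: you tensor with $\QQ$ and invoke the Wedderburn decomposition $\QQ[\Gamma_n]\cong\prod_{[\omega]}\QQ(\omega(\Gamma_n))$, whereas the paper fixes an auxiliary prime $l\ne p$ and uses the idempotent decomposition $\Zl[\Gamma_n]\cong\bigoplus_{[\omega]\in\sR_n^{(l)}}\Zl[\omega(\Gamma_n)]$ from Lemma~\ref{isomorphism-group rings}, so that its orbits are under $G_{\Ql}$ rather than $G_\QQ$. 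Since the statement concerns $\ZZ$-ranks, which are insensitive to this choice, your rational version is if anything slightly cleaner (no auxiliary prime needed); the paper's choice simply reuses machinery already set up for the main conjecture. One small point worth making explicit in a final write-up is the identification $\rank_{\ZZ[\omega(\Gamma_n)]}\coker(\Id-\omega(A_\alpha))=\dim_{\QQ(\omega(\Gamma_n))}\bigl(\QQ(\omega(\Gamma_n))^r/(\Id-\omega(A_\alpha))\bigr)$, which holds because $\QQ(\omega(\Gamma_n))$ is the fraction field of the domain $\ZZ[\omega(\Gamma_n)]$ and tensoring is right exact.
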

\begin{proof}
    The equation for the analytic ranks follows from the Artin formalism (Theorem \ref{Artin-formalism}). It remains to prove the equation for the algebraic ranks. 

   Let $l$ be a prime number distinct from $p$. For each $\omega\in\Gamma_n^\vee$, let $f_\omega$ denote $[\Ql(\omega(\Gamma_n)):\Ql]$. There are $f_\omega$ characters in the equivalence class $[\omega]$ and
   \[
   b(X_n,\omega)=b(X_n,\omega')\ \forall \omega'\in[\omega],
   \]
   and  
   \begin{equation}
   \label{eq:b-formula}\sum_{\omega'\in[\omega]}b(X_n,\omega')=f_\omega\cdot  b(X_n,\omega)=\rank_{\Zl}\Zl[\omega(\Gamma_n)]^r/(\mathrm{Id}-\omega(A_\alpha)).
   \end{equation}

 Recall from Lemma~\ref{isomorphism-group rings} that
    \[ \varphi_n:\Zl[\Gamma_n]\cong\bigoplus_{[\omega]\in\sR_n^{(l)}}\Zl[\omega(\Gamma_n)].\]
Further, we have seen in the proof of Theorem~\ref{char-udir} that there is an isomorphism $\Div_l(X_n)\cong\displaystyle \bigoplus_{v\in V(X)}\Zl[\Gamma_n](v,1)$, which gives
    \[\BF_l(X_n)\cong \Zl[\Gamma_n]^r/(\textup{Id}-A_\alpha).\] 
    
    For each $[\omega]\in\sR_n^{(l)}$,  define the map $\psi_{[\omega]}$ on $\Zl[\omega(\Gamma_n)]^r$ given by the matrix $\textup{Id}-\omega(A_\alpha)$. Let $\psi_n=\displaystyle\bigoplus_{[\omega]\in\sR_n^{(l)}}\psi_{[\omega]}$ be the corresponding map on $\displaystyle\bigoplus_{m\le n}\Zl[\omega_m(\Gamma_n)]^r$. We have the following commutative diagram
    \[\begin{tikzcd}
        \Zl[\Gamma_n]^r\arrow[r,"\textup{Id}-A_\alpha"]\arrow[d,"\varphi_n"]&\Zl[\Gamma_n]^r\arrow[d,"\varphi_n"]\\
        \left(\bigoplus_{m\le n}\Zl[\omega_m(\Gamma_n)]\right)^r\arrow[r, "\psi_n"]&\left(\bigoplus_{m\le n}\Zl[\omega_m(\Gamma_n)]\right)^r.
    \end{tikzcd}\]
    In particular, \[b(X_n)=\rank_{\Zl}(\coker(\psi_n))=\sum_{[\omega]\in \sR_n^{(l)}}\rank_{\Zl}(\coker(\psi_{[\omega]}))=\sum_{\omega\in \Gamma_n^\vee}b(X_n,\omega),\]
    where the last equality follows from \eqref{eq:b-formula}.
\end{proof}
 Lemma \ref{formula-a,b} can be regarded as an analogue of the Artin formalism for the Bowen--Franks ranks $a(X_n)$ and $b(X_n)$. 

We are now ready to prove Theorem~\ref{thmC}. 
\begin{theorem}
\label{thm.growth-delta}
    Let $(X_n)_{n\ge0}$ be a $\Z_p^d$-tower of strongly connected digraphs. For all $n\ge1$,
    \[\delta(X_n)\ge \delta(X_{n-1}).\]
\end{theorem}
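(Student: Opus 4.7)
The plan is to decompose $\delta(X_n)$ and $\delta(X_{n-1})$ as sums over characters via Lemma~\ref{formula-a,b}, match up the contributions from characters that factor through $\Gamma_{n-1}$, and then show that each individual contribution $a(X_n,\omega)-b(X_n,\omega)$ is non-negative, since it is the gap between the algebraic and geometric multiplicity of the eigenvalue $1$ of $\omega(A_\alpha)$.

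More precisely, by Lemma~\ref{formula-a,b},
\[
\delta(X_n) = \sum_{\omega\in\Gamma_n^\vee}\bigl(a(X_n,\omega)-b(X_n,\omega)\bigr),\qquad \delta(X_{n-1})=\sum_{\omega\in\Gamma_{n-1}^\vee}\bigl(a(X_{n-1},\omega)-b(X_{n-1},\omega)\bigr).
\]
The first observation is that for $\omega\in\Gamma_{n-1}^\vee\subseteq\Gamma_n^\vee$, the quantities $a(X_n,\omega)$ and $b(X_n,\omega)$ depend only on $\omega$ and not on the index $n$. Indeed, both are defined purely in terms of $\omega(A_\alpha)$: by Lemma~\ref{description-asmatrix} we have $L(X_n/X,\omega,u)=\det(\mathrm{Id}-\omega(A_\alpha)u)^{-1}$, and $b(X_n,\omega)$ is defined directly from the matrix $\mathrm{Id}-\omega(A_\alpha)$. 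Consequently $a(X_n,\omega)=a(X_{n-1},\omega)$ and $b(X_n,\omega)=b(X_{n-1},\omega)$ for every $\omega\in\Gamma_{n-1}^\vee$, and subtracting yields
\[
\delta(X_n)-\delta(X_{n-1})=\sum_{\omega\in\Gamma_n^\vee\setminus\Gamma_{n-1}^\vee}\bigl(a(X_n,\omega)-b(X_n,\omega)\bigr).
\]

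It then suffices to check that each summand is non-negative. Viewing $\omega(A_\alpha)$ as a matrix over the fraction field $K$ of $\ZZ[\omega(\Gamma_n)]$, the factor $\det(\mathrm{Id}-\omega(A_\alpha)u)^{-1}$ shows that $a(X_n,\omega)$ equals the \emph{algebraic} multiplicity of $1$ as an eigenvalue of $\omega(A_\alpha)$. On the other hand, $b(X_n,\omega)$ is the $\ZZ[\omega(\Gamma_n)]$-rank of $\coker(\mathrm{Id}-\omega(A_\alpha))$, which after tensoring with $K$ equals $r-\rank_K(\mathrm{Id}-\omega(A_\alpha))=\dim_K\ker(\mathrm{Id}-\omega(A_\alpha))$, i.e. the \emph{geometric} multiplicity of the eigenvalue $1$. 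Since algebraic multiplicity is always at least geometric multiplicity, we conclude $a(X_n,\omega)\ge b(X_n,\omega)$, and the theorem follows.

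The only delicate point I anticipate is ensuring that the identification of $b(X_n,\omega)$ with the geometric multiplicity is correctly stated over the coefficient ring $\ZZ[\omega(\Gamma_n)]$ rather than a field; but this reduces immediately to the rank-nullity theorem after tensoring with $K$, since rank of cokernel and dimension of kernel agree for a square endomorphism of a free module of finite rank. No step requires any non-trivial computation beyond that.
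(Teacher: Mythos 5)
Your proposal is correct and follows essentially the same route as the paper: decompose $\delta(X_n)$ via Lemma~\ref{formula-a,b}, observe that $a(X_n,\omega)$ and $b(X_n,\omega)$ depend only on $\omega(A_\alpha)$ so the contributions of characters in $\Gamma_{n-1}^\vee$ cancel against $\delta(X_{n-1})$, and note that each remaining summand is the non-negative gap between the algebraic and geometric multiplicities of the eigenvalue $1$. Your explicit justification of the level-independence of $a(X_n,\omega)$ and $b(X_n,\omega)$ is a point the paper leaves implicit, but the argument is the same.
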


\begin{proof}
    Let $\omega \in{\Gamma_n^\vee}$. Note that  $b(X_n,\omega)$ is the geometric multiplicity of the eigenvalue $1$ of $\omega(A_\alpha)$. As $a(X_n,\omega)$ is the algebraic multiplicity of the eigenvalue $1$ of $\omega(A_\alpha)$, we have $a(X_n,\omega)\ge b(X_n,\omega)$.  
    Lemma \ref{formula-a,b} allows us to express $\delta(X_n)$ as
    \begin{align*}&\sum_{\omega\in \Gamma_n^\vee}a(X_n,\omega)-\sum_{\omega\in \Gamma_n^\vee}b(X_n,\omega)\\
    &=\sum_{\omega \in \Gamma_{n-1}^\vee}a(X_{n},\omega)-\sum_{\omega\in \Gamma_{n-1}^\vee}b(X_n,\omega)+\sum_{\substack{\omega\in \Gamma_n^\vee\\\ord(\omega)=p^n}}a(X_{n},\omega)-\sum_{\substack{\omega\in \Gamma_n^\vee\\\ord(\omega)=p^n}}b(X_n,\omega)\\
    &=\sum_{\omega \in \Gamma_{n-1}^\vee}a(X_{n-1},\omega)-\sum_{\omega\in \Gamma_{n-1}^\vee}b(X_{n-1},\omega)+\sum_{\substack{\omega\in \Gamma_n^\vee\\\ord(\omega)=p^n}}a(X_{n},\omega)-\sum_{\substack{\omega\in \Gamma_n^\vee\\\ord(\omega)=p^n}}b(X_n,\omega)\\
    &=\delta(X_{n-1})+\sum_{\substack{\omega\in \Gamma_n^\vee\\\ord(p^n)}}\left(a(X_n,\omega)-b(X_n,\omega)\right)\\
    &\ge \delta(X_{n-1}), \end{align*}
as desired.
\end{proof}

\subsection{The special case of $d=1$}

In this subsection, we study the special case of $\Zp$-towers.
We will first prove a sufficient condition for $a(X_n)$ and $b(X_n)$ to be bounded independently of $n$. 
\begin{lemma}\label{lem:bounded-defect}
    Let $(X_n)_{n\ge0}$ be a $\Z_p$-tower of strongly connected digraphs. Assume that $\cL(X,\alpha)\neq 0$. 
    Then $a(X_n)$ and $b(X_n)$ are uniformly bounded.  In particular, $\delta(X_n)$ is bounded independently.
\end{lemma}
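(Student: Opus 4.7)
The plan is to reduce the boundedness of $a(X_n)$ and $b(X_n)$ to a finiteness statement about the zero set of $\cL_p(X,\alpha)$ on $\Gamma^\vee$, and then invoke Corollary~\ref{cor:non-vanish} specialized to $d=1$.

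First, I would apply Lemma~\ref{formula-a,b} to decompose
\[ a(X_n) = \sum_{\omega \in \Gamma_n^\vee} a(X_n,\omega), \qquad b(X_n) = \sum_{\omega \in \Gamma_n^\vee} b(X_n,\omega). \]
By Lemma~\ref{description-asmatrix}, $L(X_n/X,\omega,u) = \det(\Id - \omega(A_\alpha)u)^{-1}$, so $a(X_n,\omega)$ and $b(X_n,\omega)$ are respectively the algebraic and geometric multiplicities of $1$ as an eigenvalue of the $r\times r$ matrix $\omega(A_\alpha)$, where $r$ is the order of $X$. In particular, both are uniformly bounded above by $r$, and both vanish as soon as $\det(\Id-\omega(A_\alpha))=\omega(\cL_p(X,\alpha))$ is nonzero. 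Hence
\[ \max(a(X_n), b(X_n)) \;\le\; r \cdot \#\{\omega \in \Gamma_n^\vee : \omega(\cL_p(X,\alpha)) = 0\}. \]

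Second, I would bound the cardinality on the right-hand side. Since $\cL_p(X,\alpha)$ is a nonzero element of $\Phi_{p,1}$ with integer coefficients, the gcd $g$ of its nonzero coefficients is a positive integer, and I can choose any prime $\ell\neq p$ with $\ell\nmid g$, which ensures $\cL_p(X,\alpha) \notin \ell\,\Phi_{p,1}$. Corollary~\ref{cor:non-vanish} applied with $d=1$ and $F = \cL_p(X,\alpha)$ then yields
\[ \#\{\omega \in \Gamma_n^\vee : \omega(\cL_p(X,\alpha))=0\} \;\le\; \#\{\omega \in \Gamma_n^\vee : \ord_\ell(\omega(\cL_p(X,\alpha))) > 0\} = O(p^{(d-1)n}) = O(1). \]
Combining the two steps gives uniform bounds on $a(X_n)$ and $b(X_n)$, and hence on $\delta(X_n)=a(X_n)-b(X_n)$.

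There is no serious obstacle here: the argument is a direct combination of the Artin formalism of Lemma~\ref{formula-a,b}, the matrix interpretation of Lemma~\ref{description-asmatrix}, and the pre-established finiteness in Corollary~\ref{cor:non-vanish}. The only place the hypothesis $d=1$ is essential is in the final step, where the bound $O(p^{(d-1)n})$ collapses to a uniform constant; without this collapse (e.g.\ when $d\ge 2$), one could only control $|Z_n|$ up to a power of $p^n$, and the argument would fail in the shape given above.
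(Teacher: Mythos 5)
Your proof is correct, and it reaches the key finiteness statement by a genuinely different route from the paper. Both arguments reduce to showing that the set $Z=\{\omega\in\Gamma^\vee:\omega(\cL_p(X,\alpha))=0\}$ is finite (together with the trivial bound $a(X_n,\omega),b(X_n,\omega)\le r$, which the paper leaves implicit but you make explicit via Lemma~\ref{formula-a,b} and Lemma~\ref{description-asmatrix}). The paper proves the finiteness of $Z$ $p$-adically: it views $\cL_p(X,\alpha)$ as an element of $\Zp\llbracket T-1\rrbracket$, applies the Weierstrass preparation theorem to write it as $p^\mu$ times a distinguished polynomial times a unit, and observes that a polynomial has only finitely many roots among the $\omega(\gamma)-1$. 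You instead prove it $\ell$-adically, choosing an auxiliary prime $\ell\ne p$ not dividing the content of $\cL_p(X,\alpha)\in\Phi_{p,1}$ (a legitimate move, since $\Phi_{p,d}\cong\ZZ[\Zp^d]$ is a group ring, so the monomial coefficients are well defined and $F\in\ell\Phi_{p,d}$ exactly when all of them are divisible by $\ell$) and invoking Corollary~\ref{cor:non-vanish} with $d=1$, noting that $\omega(F)=0$ forces $\ord_\ell(\omega(F))>0$. The paper's route is more self-contained and standard for $d=1$ and pinpoints exactly where the poles occur (the roots of the distinguished polynomial), which could yield an explicit index beyond which $\delta(X_n)$ stabilizes; your route recycles machinery already established for the Sinnott--Washington theorem and, as you correctly note, degrades gracefully to the bound $a(X_n)=O(p^{(d-1)n})$ for $d\ge2$, where uniform boundedness genuinely fails. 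The only cosmetic point is that the paper bounds $b(X_n)$ by the single inequality $b(X_n)\le a(X_n)$ rather than running the character-by-character argument twice, but this does not affect correctness.
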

\begin{proof}
    As $a(X_n)\ge b(X_n)$, it suffices to prove the boundedness of $a(X_n)$. By the Artin formalism (Theorem \ref{Artin-formalism}), we have
    \[Z_{X_n}(u)=\prod_{\omega \in\Gamma_n^\vee}L(X_n/X,\omega,u).\]
    Recall that $L(X_n/X,\omega,1)^{-1}=\omega(\cL_p(X,\alpha))$ by Theorem~\ref{interpolation}. Considering  $\cL_p(X,\alpha)$ as an element of $\Zp[[T-1]]$, we can write $\cL_p(X,\alpha)=g(T-1)p^\mu u(T-1)$ for some integer $\mu\ge0$, a distinguished polynomial $g(T-1)$ and a unit $U(T-1)$. Therefore, $L(X_n/X,\omega,1)^{-1}=\omega(g(T-1))p^\mu u_\omega$ for some $p$-adic unit $u_\omega$. Thus,  $L(X_n/X,\omega,u)$ has a pole at $u=1$ if and only if $\omega(g(T-1))=0$. As $g(T-1)$ is a polynomial, this happens only for finitely many characters $\omega$, which implies that $a(X_n)$ is uniformly bounded, as desired. 
\end{proof}

The following example illustrates that it is necessary to assume the non-vanishing of $\cL_p(X,\alpha)$.

\begin{example}
Suppose $p\ne2$ and consider the digraph $X$ with the following adjacency matrix
    \[A_X=\begin{pmatrix}
     1&0&2\\
     2&3&0\\
     2&2&3
    \end{pmatrix}.\]
 Define a voltage assignment $\alpha$ on $X$ such that 
    \[A_\alpha=\begin{pmatrix}
        1&0&1+T\\
        1+T&2+T&0\\
        1+T&1+T&2+T
    \end{pmatrix}.\]
We have 
\[\cL_p(X,\alpha)=\det\begin{pmatrix}0&0&-(1+T)\\
-(1+T)&-(1+T)&0\\
-(1+T)&-(1+T)&-(1+T)
\end{pmatrix}=0\]
The
matrix $\mathrm{Id}-A_\alpha$ has rank $2$ and its characteristic polynomial is given by $\lambda^2(\lambda+2+2T)$. For all $\omega\in\Gamma^\vee$, we have $\omega(1+T)\ne0$. Thus, $a(X_n,\omega)=2$ and $b(X_n,\omega)=1$. It follows from Lemma~\ref{formula-a,b} that
\[a(X_n)=2p^n,\quad b(X_n)=p^n.\]
It follows that $\delta(X_n)=p^n$ is an increasing and unbounded sequence. 
\end{example}

In the remainder of this section we consider the special case of constant voltage assignments originally studied in \cite{constant-voltage}.
\begin{lemma}
\label{defec-constant-voltage}
Let $X$ be a strongly connected digraph of order $r$. Let $\alpha$ be the $\Zp$-valued voltage assignment that sends all edges to an element $\tau\in\Zp^\times$. Let $(X_n)_{n\ge0}$ be the $\Zp$-tower that arises from $\alpha$. If $n$ is an integer such that $\varphi(p^n)>r$ where $\varphi$ denotes Euler's totient function, then $\delta(X_n)=\delta(X_{n-1})$. In particular, $\delta(X_n)$ is uniformly bounded. Furthermore, It is constant for all $n$ if $p-1>r$. 
\end{lemma}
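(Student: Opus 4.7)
The plan is to exploit the constant-voltage structure to reduce everything to the spectrum of the unweighted adjacency matrix $A_X$. Since $\alpha$ sends every edge to $\tau\in\Zp^\times$, we have $A_\alpha=T^\tau A_X$ as a matrix with entries in $\Phi_{p,1}$. For a character $\omega\in\Gamma_n^\vee$ of order exactly $p^n$, we have $\omega(T)=\zeta$ for some primitive $p^n$-th root of unity $\zeta$, so $\omega(A_\alpha)=\zeta^\tau A_X$. Because $\tau\in\Zp^\times$ acts as an automorphism of $\mu_{p^n}$, the scalar $\zeta^\tau$ is again a primitive $p^n$-th root of unity. The equivalence $\zeta^\tau A_X v=v \Longleftrightarrow A_X v=\zeta^{-\tau} v$ then shows that $1$ is an eigenvalue of $\omega(A_\alpha)$ with algebraic (resp.\ geometric) multiplicity equal to that of $\zeta^{-\tau}$ as an eigenvalue of $A_X$. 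In other words, $a(X_n,\omega)$ and $b(X_n,\omega)$ are precisely the algebraic and geometric multiplicities of $\zeta^{-\tau}$ in the spectrum of the integer matrix $A_X$.

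The key input is that $A_X$ has integer entries, so its characteristic polynomial $\chi_{A_X}(x)\in\ZZ[x]$ has degree $r$. If $\zeta^{-\tau}$ were an eigenvalue, then its minimal polynomial over $\QQ$, namely the cyclotomic polynomial $\Phi_{p^n}(x)$ of degree $\varphi(p^n)$, would divide $\chi_{A_X}(x)$ in $\QQ[x]$, forcing $\varphi(p^n)\le r$. Contrapositively, whenever $\varphi(p^n)>r$, no primitive $p^n$-th root of unity can be an eigenvalue of $A_X$, and therefore $a(X_n,\omega)=b(X_n,\omega)=0$ for every $\omega\in\Gamma_n^\vee$ of order exactly $p^n$.

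To conclude I would re-use the identity from the proof of Theorem~\ref{thm.growth-delta},
\[
\delta(X_n)-\delta(X_{n-1})=\sum_{\substack{\omega\in\Gamma_n^\vee\\ \ord(\omega)=p^n}}\bigl(a(X_n,\omega)-b(X_n,\omega)\bigr).
\]
Under the hypothesis $\varphi(p^n)>r$, the right-hand side vanishes term by term, yielding $\delta(X_n)=\delta(X_{n-1})$. Since $\varphi(p^n)\to\infty$, the defect stabilises for all sufficiently large $n$, giving uniform boundedness. If furthermore $p-1>r$, then $\varphi(p^m)\ge p-1>r$ already at $m=1$, so iterating the argument gives $\delta(X_n)=\delta(X_0)$ for every $n\ge 0$.

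The main conceptual step is the reduction of the computation of $a(X_n,\omega)$ and $b(X_n,\omega)$ to multiplicities of the fixed rational eigenvalue $\zeta^{-\tau}$ in the spectrum of $A_X$; once this is in place, the cyclotomic-degree bound handles everything cleanly, and there is no serious technical obstacle. The only minor points requiring attention are that $\tau\in\Zp^\times$ genuinely preserves the order of a $p^n$-th root of unity, and that the identity above isolates \emph{exactly} the characters of order $p^n$ so that $\zeta^{-\tau}$ is indeed primitive of that order.
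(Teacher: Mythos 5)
Your proof is correct and follows essentially the same route as the paper: reduce to the observation that $\omega(A_\alpha)=\zeta^\tau A_X$ for a primitive $p^n$-th root of unity $\zeta^\tau$, note that a primitive $p^n$-th root of unity eigenvalue of the integer matrix $A_X$ would force $\varphi(p^n)\le r$ via the degree of the characteristic polynomial, and conclude with the telescoping identity from Theorem~\ref{thm.growth-delta} and Lemma~\ref{formula-a,b}. Your write-up is, if anything, slightly more explicit than the paper's (e.g.\ in checking that $\tau\in\Zp^\times$ preserves the order of the root of unity).
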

\begin{proof}
    Let $\omega$ be a character of order $p^n$. Then $a(X_n,\omega)>0$ if and only if $1$ is an eigenvalue of $\omega(\tau)A_X=\omega(\tau A_X)=\omega(A_\alpha)$. This is only the case if $\omega(\tau)^{-1}$ is an eigenvalue of $A_X$. As the degree of the characteristic polynomial of $A_X$ is $r$, we must have $r\ge[\Q(\zeta_{p^{n}}):\Q]=\varphi(p^n) $. Thus, whenever $\varphi(p^n)>r$, we have $a(X_n,\omega)=b(X_n,\omega)=0$. Hence, it follows from Lemma~\ref{formula-a,b} that  $\delta(X_n)=\delta(X_{n-1})$.
\end{proof}

\begin{example}
    Let $X$ be a directed cycle with $r$ vertices. Let $X_n$ be the $n$-th layer of the constant $\Z_p$-tower of $X$. Then $X_n$ is a directed cycle with $p^nr$-vertices. The characteristic polynomial of $A_{X_n}$ is $X^{p^nr}-1$. Thus, $a(X_n)=b(X_n)=1$ and $\delta(X_n)=0$.
\end{example}
Our calculations lead us to formulate the following conjecture: 
\begin{conjecture}\label{conj:constant}
    Let $(X_n)_{n\ge0}$ be a constant $\Z_p$-tower of strongly connected digraphs. Then $\delta(X_n)=\delta(X)$ for all $n$.
\end{conjecture}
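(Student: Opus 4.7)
The plan is to reduce Conjecture~\ref{conj:constant} to an eigenvalue-theoretic claim about $A_X$ and then to attack that claim using non-negative matrix theory. The starting point is the incremental identity established inside the proof of Theorem~\ref{thm.growth-delta}, namely
\[
\delta(X_n)-\delta(X_{n-1})=\sum_{\substack{\omega\in\Gamma_n^\vee\\\ord(\omega)=p^n}}\bigl(a(X_n,\omega)-b(X_n,\omega)\bigr).
\]
Since each summand is non-negative, it suffices to prove $a(X_n,\omega)=b(X_n,\omega)$ for every $\omega$ of order exactly $p^n$ and every $n\ge 1$.

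Because $\alpha$ is constant with value $\tau\in\Zp^\times$, we have $A_\alpha=T_1^{\tau}\cdot A_X$ in $\Phi_{p,1}$, so $\omega(A_\alpha)=\omega(\tau)\,A_X$. Hence $a(X_n,\omega)$ and $b(X_n,\omega)$ equal the algebraic and geometric multiplicities of $\omega(\tau)^{-1}$ as an eigenvalue of $A_X$. As $\omega$ ranges over characters of order exactly $p^n$, the element $\omega(\tau)^{-1}$ ranges bijectively over the primitive $p^n$-th roots of unity (using that $\tau$ is a unit modulo $p^n$). Thus the conjecture is equivalent to the purely matrix-theoretic claim that, for the adjacency matrix $A_X$ of a strongly connected digraph, no primitive $p^n$-th root of unity with $n\ge 1$ is a defective eigenvalue of $A_X$.

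The first case I would settle is $\rho(A_X)=1$, where $\rho$ denotes the spectral radius. Since $A_X$ is irreducible and non-negative, Perron--Frobenius shows that every eigenvalue of modulus $1$ is simple (they are precisely the $h$-th roots of unity, where $h$ is the period of $X$), so the claim is immediate. This settles, for example, the directed-cycle case discussed in Section~7.1.

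The main obstacle is the complementary case $\rho(A_X)>1$, in which roots of unity lie strictly inside the spectral disc and Perron--Frobenius imposes no restriction on their Jordan structure. My proposed line of attack is to exploit the explicit description of $A_{X_n}$ for a constant voltage: conjugation by the discrete Fourier transform on $\Z/p^n\Z$ identifies $A_{X_n}$ with the block-diagonal matrix $\bigoplus_{\zeta^{p^n}=1}\zeta^{\tau}A_X$, so the Jordan blocks of $A_{X_n}$ decompose as the disjoint union of Jordan blocks of the scaled copies $\zeta^{\tau}A_X$. One can then try to bound the size of Jordan blocks at root-of-unity eigenvalues by relating $\ker(A_X-\zeta I)$ and $\ker(A_X-\zeta I)^2$ to cycle-weighted combinatorial invariants of $X$, and attempting to show that the second kernel cannot strictly contain the first when $\zeta$ is a non-trivial root of unity. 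Ruling out Jordan defectiveness at root-of-unity eigenvalues strictly inside the spectral disc, for general irreducible non-negative integer matrices, appears to be a genuine open question in matrix theory, and isolating the correct extra structural ingredient beyond strong connectivity is the crux of the difficulty.
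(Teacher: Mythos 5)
The statement you are trying to prove is stated in the paper as Conjecture~\ref{conj:constant}; the paper does not prove it, and offers only partial evidence: Lemma~\ref{defec-constant-voltage} (the defect stabilizes once $\varphi(p^n)>r$, and is constant from the start when $p-1>r$), the directed-cycle example, and the isogeny-graph case (Proposition~\ref{prop:Yn} and Theorem~\ref{cor.delta-isogeny}), which is settled only by importing Codogni--Lido's explicit determination of the eigenvalues of $A_{\cY_0}$ to rule out primitive $p$-power roots of unity in the spectrum altogether. So there is no proof in the paper to compare yours against, and your proposal is, by your own admission, not a proof either.

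That said, your reduction is correct and is exactly the right way to frame the problem: the telescoping identity from Theorem~\ref{thm.growth-delta} shows $\delta(X_n)=\delta(X_{n-1})$ iff $a(X_n,\omega)=b(X_n,\omega)$ for every $\omega$ of exact order $p^n$ (each summand is non-negative, so the sum vanishes iff each term does), and for a constant voltage $\tau\in\Zp^\times$ one has $\omega(A_\alpha)=\omega(\tau)A_X$ with $\omega(\tau)$ ranging over all primitive $p^n$-th roots of unity. Your Perron--Frobenius observation disposes of $\rho(A_X)=1$ (which for a strongly connected digraph is essentially the directed-cycle case already treated in the paper), and this is a small genuine addition. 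The gap is the case $\rho(A_X)>1$: you would need to show that a primitive $p^n$-th root of unity lying strictly inside the spectral disc of an irreducible non-negative integer matrix cannot be a defective eigenvalue, and neither you nor the paper has an argument for this; the paper circumvents it in the isogeny setting only because there the relevant roots of unity are not eigenvalues at all. Be aware that ruling out defectiveness at interior eigenvalues is likely to require input beyond strong connectivity (irreducibility alone controls only the peripheral spectrum), and also that the conjecture as stated concerns roots of unity twisted by $\tau$, so any counterexample hunt should scan integer irreducible matrices for defective eigenvalues at primitive $p^n$-th roots of unity. Your write-up should be presented as a reduction and partial result, not as a proof of the conjecture.
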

The following example shows that for non-constant voltage assignments, the defect can grow along $\Z_p$-towers even if it is uniformly bounded.
\begin{example} Consider a digraph $X$ with the following adjacency matrix
\[A_X=\begin{pmatrix}
    1&1&0&5\\
    1&6&1&0\\
    0&6&6&1\\
    1&0&1&1
\end{pmatrix}.\] It can be verified that $X$ is  strongly connected. Let $\alpha$ be a voltage assignment on $X$ such that 
    \[
A_\alpha=
\begin{pmatrix}
1  & 1 & 0&T^4+T^3+T^2+T+1\\
1 &  T^4+T^3+T^2+T+2 & 1&0\\
0 &T^4+T^3+T^2+T+2 &  T^4+T^3+T^2+T+2&1\\
1&0&1&1
\end{pmatrix}
\]

This produces a strongly connected $\Z_5$-tower. We can check that $1$ is not an eigenvalue of $A$. In particular, $\delta(X)=0$. 

Let $\zeta_0$ be a primitive fifth root of unity. For $1\le k\le 4$. the matrix $A_\alpha(\zeta^k_0)$ is equal to
\[\begin{pmatrix}
    1&1&0&0\\
    1&1&1&0\\
    0&1&1&1\\
    1&0&1&1
    
\end{pmatrix}.\]
Note that $\lambda=1$ is an eigenvalue of algebraic multiplicity $2$ and geometric multiplicity $1$.  Thus, for every character $\omega$ of order 5, we have $a(X_1,\omega)=2$ and $b(X_1,\omega)=1$. Thus, $\delta(X_1)=4$. In fact, given a complex number $\zeta$, the matrix $A_\alpha|_{T=\zeta}$ admits 1 as an eigenvalue if and only if  $\zeta$ is a primitive $5$-th root of unity. Thus, $a(X_n)=a(X_1)$ for all $n\ge1$. In particular, $a(X_n)$ is bounded as $n\to\infty$. 
\end{example}

\subsection{Application to isogeny graphs}
In this subsection, let $p,q$ and $l$ be distinct prime numbers. 

\begin{defn}\label{def:isogeny}
     We denote by $\mathcal{X}(q)$ the digraph whose vertices are isomorphism classes of supersingular elliptic curve $E/\mathbb{F}_{l^2}$ and whose edges are given by equivalence classes of $q$-isogenies, where two isogenies are deemed equivalent if they differ by an automorphism. Let $\mathcal{X}_0$ be a fixed connected component of $\mathcal{X}(q)$.  Let $\mathcal{X}_n$ be the $n$-th layer of the constant $\Z_p$-tower of $\mathcal{X}_0$. 
\end{defn}

The purpose of this subsection is to prove that Conjecture~\ref{conj:constant} holds for the tower $(\cX_n)_{n\ge0}$. 
We will make use of an auxiliary $\Zp$-tower, which we define below.
\begin{defn}
     For an integer $n\ge1$, let $\cX(q,n)$ be the digraph where the vertices are $(E,\zeta)$, where $E\in V(\cX(q))$ and $\zeta$ is a primitive $p^n$-th root of unity and the set of edges from $(E,\zeta)$ to $(E',\zeta')$ corresponds to the $q$-isogenies from $E$ to $E'$ when $\zeta'=\zeta^q$, and there are no edges when $\zeta'\ne \zeta^q$.
     \end{defn}

     As discussed in \cite[Example 5.7]{LM2}, $\cX(q,n)$ is the derived digraph of $\cX(q)$ for the voltage assignment that sends all edges to the image of $q$ in $(\ZZ/p^n\ZZ)^\times$. 
     In particular, $\cX(q,n)$ is a $\phi(p^n)$-sheeted cover of $\cX(q)$. Given an elliptic curve $E$ and a subgroup $H$ of $\GL_2(\ZZ/p^n\ZZ)$, a level $H$ structure on $E$ is an isomorphism of $\phi:(\ZZ/p^n\ZZ)^2\to E[p^n]$ considered up to composition with elements of $H$, as given in \cite[Definition 1.1]{codogni-lido}. Considering the Weil pairing of $\phi(1,0)$ with $\phi(0,1)$, we deduce that the choice of a primitive $p^n$-th root of unity $\zeta$ in $(E,\zeta)$ is  a  $\SL_2(\ZZ/p^n\ZZ)$ level structure on $E$.
     
     There is a natural covering $\cX(q,1)\to\cX(q)$.
     In what follows, we fix a strongly connected component $\cY_0$ of $\cX(q,1)$ that lies inside the pre-image of $\cX_0$. For each $n\ge0$, let $\cY_n$ denote a strongly connected component of $\cX(q,n+1)$ that lies inside the pre-image of $\cY_0$ under the projection map $\cX(q,n+1)\to\cX(q,1)$. There exists an index $n_0$ such that $(\cY_n)_{n\ge n_0}$ is a $\Zp$-tower of strongly connected digraph. To simplify notation, we will assume that $n_0=0$. We remark that the proofs we present below will work in the same way for $n_0>0$. Note that $(\cY_n)_{n\ge 0}$ arises from the constant voltage assignment that sends all edges of $\cY_0$ to $q\in\Zp$, and there is a natural covering $\cY_n\to \cX_n$.

\begin{proposition}\label{prop:Yn}
For all $n\ge0$ and all non-trivial characters $\omega$ in $\Gamma_n^\vee$, we have    $a(\mathcal{Y}_n,\omega)=0$. In particular, $\delta(\mathcal{Y}_n)=\delta(\mathcal{Y}_0)$ for all $n$.
\end{proposition}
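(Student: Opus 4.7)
The plan is to exploit the constant-voltage structure of the tower $(\cY_n)_{n\ge 0}$ to reduce the vanishing of $a(\cY_n,\omega)$ to a spectral statement about $A_{\cY_0}$, and then to use the explicit relation between the supersingular isogeny spectrum and modular forms provided by Codogni--Lido in order to rule out the offending spectral phenomenon.

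First, the constant-voltage hypothesis gives $\omega(A_\alpha) = \omega(q)\,A_{\cY_0}$ for every $\omega \in \Gamma^\vee$, since every entry of $A_\alpha$ is a non-negative integer multiple of the single monomial $T^q$. Combined with Lemma~\ref{description-asmatrix}, this yields
\[
L(\cY_n/\cY_0,\omega,u)^{-1} = \det\bigl(\mathrm{Id} - u\,\omega(q)\,A_{\cY_0}\bigr),
\]
so $a(\cY_n,\omega)$ equals the algebraic multiplicity of $\omega(q)^{-1}$ as an eigenvalue of $A_{\cY_0}$. Because $q$ is coprime to $p$, any non-trivial $\omega \in \Gamma_n^\vee$ of order $p^k$ has the property that $\omega(q)^{\pm 1}$ is a primitive $p^k$-th root of unity. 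Hence the proposition reduces to showing that no non-trivial $p$-power root of unity lies in the spectrum of $A_{\cY_0}$.

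To establish this, I would invoke Codogni--Lido \cite{codogni-lido}, which identifies the spectrum of the supersingular $q$-isogeny graph at the appropriate level (the $\SL_2$-type level structure encoded by fixing a primitive $p$-th root of unity via the Weil pairing) with Hecke eigenvalues $a_q(f)$ of weight-$2$ eigenforms $f$ on a suitable congruence subgroup, augmented by an Eisenstein contribution at $q+1$. The Ramanujan--Petersson bound gives $|a_q(f)|\le 2\sqrt{q}$, while the trivial Nebentypus forced by the $\SL_2$-structure ensures each $a_q(f)$ is a totally real algebraic integer. Any real root of unity is $\pm 1$, and the value $1$ only arises from the trivial character; for odd $p$, the value $-1$ is not a $p$-power root of unity and the claim follows at once. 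For $p=2$ one must separately verify that $-1$ is absent from the spectrum of $A_{\cY_0}$, which should follow from a careful analysis of the Codogni--Lido decomposition restricted to the strongly connected component $\cY_0$.

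Once $a(\cY_n,\omega)=0$ is established for every non-trivial $\omega \in \Gamma_n^\vee$, the inequality $0 \le b(\cY_n,\omega) \le a(\cY_n,\omega)$ forces $b(\cY_n,\omega)=0$ as well. Lemma~\ref{formula-a,b} then yields $a(\cY_n) = a(\cY_n,1) = a(\cY_0)$ and $b(\cY_n) = b(\cY_0)$, whence $\delta(\cY_n) = \delta(\cY_0)$. The main obstacle is pinning down the Codogni--Lido input precisely enough to control the Nebentypus and to exclude $\lambda=-1$ from the relevant component when $p=2$; every other step in the argument is a formal consequence of the lemmas already proved in this paper.
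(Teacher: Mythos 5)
Your reduction coincides with the paper's: the constant voltage gives $\omega(A_\alpha)=\omega(q)A_{\cY_0}$, so $a(\cY_n,\omega)$ is the algebraic multiplicity of the primitive $p^k$-th root of unity $\omega(q)^{-1}$ as an eigenvalue of $A_{\cY_0}$ (this is exactly the mechanism of Lemma~\ref{defec-constant-voltage}), and everything comes down to showing that no primitive $p$-power root of unity lies in the spectrum of $A_{\cY_0}$; the passage from $a(\cY_n,\omega)=0$ to $\delta(\cY_n)=\delta(\cY_0)$ via Lemma~\ref{formula-a,b} is also the same.

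The genuine gap is in the spectral input you extract from Codogni--Lido. You assert that the level structure forces trivial nebentypus, hence totally real eigenvalues, hence that the only roots of unity that could occur are $\pm1$. This is not correct: $\cY_0$ is a strongly connected component of $\cX(q,1)$, a cyclic cover of $\cX(q)$ of degree dividing $p-1$, and the spectrum of $A_{\cY_0}$ decomposes over the characters of that deck group, which act precisely as (generally non-trivial) nebentypus characters; the associated Hecke eigenvalues are not totally real. Indeed, the statement of \cite[Theorem 1.6]{codogni-lido} that the paper invokes already exhibits non-real eigenvalues: every eigenvalue of $A_{\cY_0}$ is either of the form $(q+1)\zeta$ with $\zeta\in\mu_{p-1}$ (non-real once $p>3$), or is a complex number whose argument lies in $\ZZ\pi/k'$, where $k'$ is the order of $q^2$ in $(\ZZ/p\ZZ)^\times$. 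The paper's exclusion of primitive $p$-power roots of unity is then an angle computation rather than a reality argument: a primitive $p^m$-th root of unity has argument $2\pi a/p^m$ with $\gcd(a,p)=1$, and since $k'$ divides $p-1$ this argument cannot lie in $\ZZ\pi/k'$ for odd $p$, while the Eisenstein eigenvalues $(q+1)\zeta$ have absolute value $q+1>1$. So you should replace the ``totally real, hence $\pm1$'' step by this constraint on arguments. (Your residual concern about $\lambda=-1$ when $p=2$ does survive this correction --- the argument $\pi$ does lie in $\ZZ\pi/k'$ --- and the paper's own proof is silent on that case as well, so if $p=2$ is meant to be covered it requires a separate verification in both arguments.)
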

\begin{proof}
As in  the proof of Lemma~\ref{defec-constant-voltage}, in order to show that $\delta(\mathcal{Y}_n)=\delta(\mathcal{Y}_0)$ for all $n\ge0$, it suffices to show that the eigenvalues of the adjacency matrix $A_{\cY_0}$ are not primitive $p$-power roots of unity.

Let $k$ be the order of $q$ in $(\ZZ/p\ZZ)^\times/\det(\SL_2(\ZZ/p\ZZ))=(\ZZ/p\ZZ)^\times$, and $k'$ the smallest positive integer such that $q^{k'}\mathrm{Id}\in\SL_2(\ZZ/p\ZZ)$, i.e., the order of $q^2$ in $(\ZZ/p\ZZ)^\times$. In particular, both $k$ and $k'$ divide $p-1$.
We recall from \cite[Theorem 1.6]{codogni-lido} that if $\lambda$ is an eigenvalue of $A_{\cY_0}$, then $\lambda$ is of the form $(q+1)\zeta$, where $\zeta\in\mu_{p-1}$, or $\lambda$ is a complex number with angle in $\ZZ\pi/k'$. Therefore, none of the eigenvalues of $A_{\cY_0}$ are primitive $p$-power roots of unity, as desired.
\end{proof}

We conclude with the following theorem.

\begin{theorem}
\label{cor.delta-isogeny}
Let $p,q$ and $l$ be distinct prime numbers. 
    Let $\cX_n$ be defined as in Definition~\ref{def:isogeny}. Then \[\delta(\mathcal{X}_n)=\delta(\mathcal{X}_0)\]
    for all $n\ge 0$. 
\end{theorem}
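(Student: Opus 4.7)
The plan is to combine the monotonicity of $\delta$ along the tower with a reduction to an eigenvalue statement for $A_{\cX_0}$, and then to inherit the required fact from the analogous statement for $A_{\cY_0}$ proved in Proposition~\ref{prop:Yn}.

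By Theorem~\ref{thm.growth-delta} we already know $\delta(\cX_n)\ge\delta(\cX_0)$, so I only need the reverse inequality. Let $\tau\in\Zp^\times$ be the (constant) value of the voltage assignment defining $(\cX_n)$; then $A_\alpha=T^\tau A_{\cX_0}$, and hence for each $\omega\in\Gamma_n^\vee$ one has $\omega(A_\alpha)=\omega(\gamma)^\tau A_{\cX_0}$, where $\gamma$ is the chosen topological generator of $\Zp$. By Lemma~\ref{description-asmatrix}, $a(\cX_n,\omega)$ equals the algebraic multiplicity of $1$ as an eigenvalue of $\omega(A_\alpha)$, so $a(\cX_n,\omega)>0$ exactly when $\omega(\gamma)^{-\tau}$ is an eigenvalue of $A_{\cX_0}$. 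For a non-trivial $\omega\in\Gamma_n^\vee$ of order $p^k$, the element $\omega(\gamma)^{-\tau}$ is a primitive $p^k$-th root of unity since $\tau\in\Zp^\times$. Combining the Artin-formalism of Lemma~\ref{formula-a,b} with the inequalities $a(\cX_n,\omega)\ge b(\cX_n,\omega)\ge 0$, the desired inequality $\delta(\cX_n)\le\delta(\cX_0)$ will reduce to showing that no primitive $p$-power root of unity is an eigenvalue of $A_{\cX_0}$.

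To establish this, I will restrict the cover $\cX(q,1)\to\cX(q)$ to the strongly connected components in question, obtaining a covering $\pi:\cY_0\to\cX_0$ that remains locally an isomorphism. The pullback operator $f\mapsto f\circ\pi$ on vertex functions intertwines the two adjacency operators, because $\pi$ induces bijections $\In(v)\to\In(\pi(v))$ at every vertex $v\in V(\cY_0)$. Consequently every eigenvalue of $A_{\cX_0}$ lifts to an eigenvalue of $A_{\cY_0}$. Finally, Proposition~\ref{prop:Yn}, which invokes \cite[Theorem~1.6]{codogni-lido}, records that the eigenvalues of $A_{\cY_0}$ lie in $\{(q+1)\zeta:\zeta\in\mu_{p-1}\}\cup\{\lambda\in\CC:\arg(\lambda)\in\ZZ\pi/k'\}$, a set disjoint from the primitive $p$-power roots of unity. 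This gives the required vanishing of $a(\cX_n,\omega)$ for every non-trivial $\omega\in\Gamma_n^\vee$, and the theorem follows.

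The only real subtlety to verify is that the restriction $\cY_0\to\cX_0$ of the cover of $\cX(q)$ is itself a covering of digraphs in the sense of this paper and that the pullback intertwines the adjacency operator under our edge-direction conventions; both points follow from the local-isomorphism property of the original covering $\cX(q,1)\to\cX(q)$ and the fact that at every vertex of $\cY_0$, the edges of $\cX(q,1)$ incident to it are automatically contained in $\cY_0$ since $\cY_0$ is a connected component.
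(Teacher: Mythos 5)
Your proposal is correct and follows essentially the same route as the paper: both arguments reduce the claim to showing $a(\cX_n,\omega)=0$ for non-trivial $\omega$ and transfer the eigenvalue restriction of Codogni--Lido from $A_{\cY_0}$ (via Proposition~\ref{prop:Yn}) through the covering $\cY_\bullet\to\cX_\bullet$. Your pullback-intertwining argument just makes explicit the inequality $a(\cY_n,\omega)\ge a(\cX_n,\omega)$ that the paper asserts directly from the existence of the covering.
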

\begin{proof}Let $\omega\in\Gamma_n^\vee$ be a non-trivial character.
Since there is a covering $\mathcal{Y}_n\to\mathcal{X}_n$, we have. $a(\mathcal{Y}_n,\omega)\ge a(\mathcal{X}_n,\omega)$. Thus, it follows from $a(\mathcal{Y}_n,\omega)= a(\mathcal{X}_n,\omega)=0$, which implies that $\delta(\cX_n)=\delta(\cX_0)$. 
    \end{proof}
\bibliographystyle{alpha} 
\bibliography{references}
\end{document}